\let\origsetminus\setminus
\let\originfty\infty
\let\origpartial\partial
\let\origin\in
\let\origsubset\subset
\let\origsimeq\simeq
\let\origast\ast
\let\origsum\sum
\let\setminus\origsetminus
\let\infty\originfty
\let\partial\origpartial
\let\in\origin
\let\subset\origsubset
\let\simeq\origsimeq
\let\ast\origast
\let\sum\origsum
\DeclareMathOperator{\Th}{Th}
\DeclareMathOperator{\thom}{th}
\DeclareMathOperator{\SH}{SH}
\DeclareMathOperator{\W}{W}
\DeclareMathOperator{\Hop}{H_\bullet}
\DeclareMathOperator{\Hot}{H}
\DeclareMathOperator{\Spc}{Spc}
\DeclareMathOperator{\Spcp}{Spc_\bullet}
\DeclareMathOperator{\Spt}{Spt}
\DeclareMathOperator{\SL}{SL}
\DeclareMathOperator{\Ho}{H}
\DeclareMathOperator{\Aut}{Aut}
\DeclareMathOperator{\End}{End}
\DeclareMathOperator{\Pic}{Pic}
\DeclareMathOperator{\can}{can}
\DeclareMathOperator{\id}{id}
\DeclareMathOperator{\Spec}{Spec}
\DeclareMathOperator{\Hom}{Hom}
\DeclareMathOperator{\BGL}{BGL}
\DeclareMathOperator{\BSL}{BSL}
\DeclareMathOperator{\GL}{GL}
\DeclareMathOperator{\Oo}{O}
\DeclareMathOperator{\Gr}{Gr}
\DeclareMathOperator{\hocolim}{hocolim}
\DeclareMathOperator{\rank}{rank}
\DeclareMathOperator{\Sm}{Sm}
\DeclareMathOperator{\Ab}{\mathbb{A}}
\DeclareMathOperator{\colim}{colim}
\newcommand{\T}{T}
\newcommand{\vo}{\oplus}
\newcommand{\Uni}{\mathcal{U}}
\newcommand{\Quo}{\mathcal{Q}}
\newcommand{\BB}{\mathrm{B}}
\newcommand{\Un}{\mathbf{1}}
\newcommand{\Aa}{A^{*,*}}
\newcommand{\Su}{\Sigma^{\infty}}
\newcommand{\Sup}{\Sigma^{\infty}_+}
\newcommand{\Oc}{\mathcal{O}}
\newcommand{\mun}{\mu_n}
\newcommand{\Zz}{\mathbb{Z}}
\newcommand{\Pp}{\mathbb{P}}
\newcommand{\Nn}{\mathbb{N}}
\newcommand{\Gm}{\mathbb{G}_m}
\newcommand{\Dc}{\mathcal{D}}
\newlength{\mylen}
\newtheorem*{theorem*}{Theorem}
\newtheorem*{proposition*}{Proposition}
\newtheorem{theorem}{Theorem}[section]
\newaliascnt{proposition}{theorem}
\newtheorem{proposition}[proposition]{Proposition}
\newaliascnt{lemma}{theorem}
\newtheorem{lemma}[lemma]{Lemma}
\newaliascnt{corollary}{theorem}
\newtheorem{corollary}[corollary]{Corollary}
\theoremstyle{definition}
\newaliascnt{remark}{theorem}
\newtheorem{remark}[theorem]{Remark}
\newaliascnt{example}{theorem}
\newtheorem{example}[example]{Example}
\newaliascnt{definition}{theorem}
\newtheorem{definition}[definition]{Definition}
\newtheoremstyle{par}
  {}
  {}
  {}
  {}
  {}
  {.}
  { }
  {}%
\theoremstyle{par}
\newtheorem{para}[theorem]{}
\numberwithin{equation}{theorem}
\newcommand{\rref}[1]{(\ref{#1})}
\newcommand{\dref}[2]{(\ref{#1}.\ref{#2})}
\begin{document}
\begin{abstract}
We observe that, in the eta-periodic motivic stable homotopy category, odd rank vector bundles behave to some extent as if they had a nowhere vanishing section. We discuss some consequences concerning $\SL^c$-orientations of motivic ring spectra, and the \'etale classifying spaces of certain algebraic groups. In particular, we compute the classifying spaces of diagonalisable groups in the eta-periodic motivic stable homotopy category.
\end{abstract}

\author{Olivier Haution}
\title{Odd rank vector bundles in eta-periodic motivic homotopy theory}
\email{olivier.haution at gmail.com}
\address{Mathematisches Institut, Ludwig-Maximilians-Universit\"at M\"unchen, Theresienstr.\ 39, D-80333 M\"unchen, Germany}
\thanks{This work was supported by the DFG research grant HA 7702/5-1 and Heisenberg grant HA 7702/4-1.}

\subjclass[2020]{14F42}

\keywords{motivic homotopy theory, orientations, motivic Hopf map, \'etale classifying spaces of linear groups}
\date{\today}

\maketitle

\numberwithin{theorem}{section}
\numberwithin{lemma}{section}
\numberwithin{proposition}{section}
\numberwithin{corollary}{section}
\numberwithin{example}{section}
\numberwithin{definition}{section}
\numberwithin{remark}{section}

\section*{Introduction}
Around forty years ago, Arason computed the Witt groups of projective spaces \cite{Arason-Witt-Pn}. This computation was later revisited by Gille \cite{Gille-Witt-Pn}, Walter \cite{Walter-GW-Pn}, and Nenashev \cite{Nenashev-Witt-Pn}. It exhibited Witt groups as a somewhat exotic cohomology theory, whose value on projective spaces differs quite drastically from what is obtained in more classical cohomology theories such as Chow groups or $K$-theory. It is now understood that this behaviour reflects the lack of $\GL$-orientation in Witt theory.

Ananyevskiy observed in  \cite{Ana-Pushforwards} that the key property of Witt groups permitting to perform these computations turns out to be the fact that the Hopf map
\[
\eta \colon \Ab^2 \smallsetminus \{0\} \to \Pp^1, \quad (x:y) \mapsto [x:y]
\]
induces by pullback an isomorphism of Witt groups $\W(\Pp^1) \xrightarrow{\sim} \W(\Ab^2 \smallsetminus \{0\})$. He thus extended in \cite{Ana-Pushforwards} the above-mentioned computations to arbitrary cohomology theories in which $\eta$ induces an isomorphism.\\

Inverting the Hopf map $\eta$ in the motivic stable homotopy category $\SH(S)$ over a base scheme $S$ yields its $\eta$-periodic version $\SH(S)[\eta^{-1}]$, a category which has been studied in details by Bachmann--Hopkins \cite{Bachmann-Hopkins}. In this paper, we lift Ananyevskiy's computations of the cohomology of projective spaces to the $\eta$-periodic stable homotopy category: we obtain for instance that a projective bundle of even relative dimension becomes an isomorphism in $\SH(S)[\eta^{-1}]$.

Another familiar feature of Witt groups is that twisting these groups by squares of line bundles has no effect, which may be viewed as a manifestation of the $\SL^c$-orientability of Witt groups (see below). We show that that property of Witt groups in fact follows from their $\eta$-periodicity alone (see \rref{prop:twist_square} for a more general statement):
\begin{proposition*}
Let $V \to X$ be a vector bundle and $L \to X$ a line bundle. Then we have an isomorphism of Thom spaces
\[
\Th_X(V) \simeq \Th_X(V \otimes L^{\otimes 2}) \in \SH(S)[\eta^{-1}].
\]
\end{proposition*}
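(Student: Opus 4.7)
The plan is to deduce the proposition from the paper's main result on odd-rank vector bundles (announced in the abstract), which I expect to take the following form: in $\SH(S)[\eta^{-1}]$, the Thom space of an odd-rank vector bundle depends only on the isomorphism class of its determinant line bundle modulo squares in $\Pic(X)$. Granting this, the proposition becomes a short computation.

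The key arithmetic observation is $\det(V \otimes L^{\otimes 2}) = (\det V) \otimes L^{\otimes 2 \rank V}$, which shows that $\det V$ and $\det(V \otimes L^{\otimes 2})$ differ precisely by the square $(L^{\otimes \rank V})^{\otimes 2}$ and hence agree modulo squares. When $\rank V$ is odd, I apply the odd-rank result directly to $V$ and $V \otimes L^{\otimes 2}$ to obtain the desired equivalence. When $\rank V$ is even, I first stabilize: $V \vo \Oc$ and $V \otimes L^{\otimes 2} \vo \Oc$ are both of odd rank with determinants still agreeing modulo squares, so the odd-rank result gives $\Th(V \vo \Oc) \simeq \Th(V \otimes L^{\otimes 2} \vo \Oc)$. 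Using the identification $\Th(W \vo \Oc) \simeq \T \wedge \Th(W)$ and the $\otimes$-invertibility of $\T$ in $\SH(S)$, I cancel the $\T$ factor to conclude.

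The hard part will be matching the precise formulation of the odd-rank result to the ``determinant modulo squares'' language used above. If the odd-rank theorem is instead stated as a structural splitting, e.g.\ that every odd-rank $V$ becomes equivalent in $\SH(S)[\eta^{-1}]$ to a sum $V' \vo \Oc$ for some $V'$ of rank $\rank V - 1$, then an iteration of this splitting (alternating with the stabilization trick above) reduces the problem to the rank-$1$ case, namely showing $\Th(M) \simeq \Th(M \otimes L^{\otimes 2})$ for line bundles $M$ and $L$. That rank-$1$ case would require a separate direct argument, plausibly exploiting the vanishing of the hyperbolic class $1 + \langle -1 \rangle$ in the $\eta$-inverted stable stems, which governs the degree of the squaring map on $\Gm$.
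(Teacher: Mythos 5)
The approach does not go through: it hinges on an ``odd-rank result'' that the paper neither states nor proves, and which is substantially stronger than what is actually established. The paper's odd-rank statement \rref{prop:split_odd} says that for $E$ of odd rank the projection $E^\circ \to X$ acquires a \emph{stable} section in $\SH(S)[\eta^{-1}]$, and that the associated coequaliser diagram splits; it does not say that $\Th_X(E)$ depends only on $\det E$ modulo squares in $\Pic(X)$, nor that $E$ becomes equivalent to $E' \vo 1$ in any sense strong enough to compare Thom spaces of two bundles with the same determinant class. (Statements of that stronger kind belong to the follow-up paper on Pontryagin classes, which takes the present proposition as one of its starting points, so invoking them here would be circular.) Consequently both your main route and your fallback reduction to the rank-one case rest on an unavailable lemma. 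The parts of your plan that are sound --- the identity $\det(V\otimes L^{\otimes 2}) = \det V \otimes L^{\otimes 2\rank V}$ and the stabilisation $\Th(W \vo 1) \simeq \T \wedge \Th(W)$ with $\T$ invertible --- never get to interact with a usable input.

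Moreover, the rank-one case that you defer to ``a separate direct argument'' is exactly where the content lies, and the mechanism you suggest (vanishing of $1+\langle -1\rangle$ after inverting $\eta$) is not the one that is used. The paper's proof of \rref{prop:twist_square} has two steps. First, a \emph{local} step: when $L$ is trivialised by $\alpha$, the isomorphism $\Th(\id_V \otimes \alpha^{\otimes 2}) \colon \Th(V) \to \Th(V\otimes L^{\otimes 2})$ is shown to be independent of $\alpha$ in $\SH(S)[\eta^{-1}]$, because replacing $\alpha$ by $u\alpha$ composes it with $\Th(u^2 \id_V) = \langle u^{2r} \rangle = 1$; here \rref{lemm:T_u} (whose proof is where the odd-rank and projective-bundle machinery genuinely enters) and the unstable transvection argument of \rref{lemm:square} do the work. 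Second, a \emph{descent} step: the locally defined isomorphisms are glued along $L^\circ \to X$ using the tautological trivialisation of $p^*L$ and the split coequaliser $\Th_{L^\circ \times_X L^\circ}(-) \rightrightarrows \Th_{L^\circ}(-) \to \Th_X(-)$ of \rref{cor:split_line}, the agreement of the two pullbacks to $L^\circ \times_X L^\circ$ being exactly the independence established in the first step. Neither ingredient appears in your proposal, so as written it does not yield the proposition.
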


Panin and Walter introduced \cite[\S3]{PW-BO} the notion of $\SL^c$-orientability for algebraic cohomology theories, which consists in the data of Thom classes for vector bundles equipped with a square root of their determinant, and proved that Hermitian $K$-theory is $\SL^c$-oriented. Ananyevskiy later showed \cite[Theorem~1.2]{Ana-SL} that a cohomology theory is $\SL^c$-oriented as soon as it is a Zariski sheaf in bidegree $(0,0)$, and pointed out \cite[Theorem~1.1]{Ana-SL} the close relations between $\SL^c$-orientations and $\SL$-orientations (the latter consisting in the data of Thom classes for vector bundles with trivialised determinant). We show in this paper that the two notions actually coincide in the $\eta$-periodic context:

\begin{theorem*}
Every $\SL$-orientation of an $\eta$-periodic motivic commutative ring spectrum is induced by a unique $\SL^c$-orientation.
\end{theorem*}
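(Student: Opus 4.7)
The plan is to construct, for every $\SL^c$-bundle datum $(V, L, \alpha \colon L^{\otimes 2} \xrightarrow{\sim} \det V)$ on a scheme $X$, a Thom class by adjoining a line bundle that trivialises the determinant and then absorbing it via the preceding proposition.

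Concretely, set $W = V \oplus L^{\vee \otimes 2}$. The isomorphism $\alpha$ trivialises $\det W$, so $W$ carries a canonical $\SL$-structure and hence a Thom class $\tau_W$ from the given $\SL$-orientation. Applying the preceding proposition with the trivial bundle $\Oc_X$ in the role of $V$ and $L^{\vee}$ in the role of $L$ yields an isomorphism $\Th_X(\Oc_X) \xrightarrow{\sim} \Th_X(L^{\vee \otimes 2})$ in $\SH(S)[\eta^{-1}]$, which smashed with $\Th_X(V)$ provides
\[
\Sigma \Th_X(V) \simeq \Th_X(V) \wedge \Th_X(\Oc_X) \xrightarrow{\sim} \Th_X(V \oplus L^{\vee \otimes 2}).
\]
Transporting $\tau_W$ along this isomorphism and desuspending yields the proposed $\SL^c$-Thom class $\tau_{V, L, \alpha}$ on $\Th_X(V)$.

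It then remains to check the axioms of an $\SL^c$-orientation: naturality under pullbacks and isomorphisms of the data, normalisation (for $L = \Oc_X$ and $\alpha$ the given $\SL$-trivialisation, one recovers the original $\SL$-Thom class), and multiplicativity under direct sums. Normalisation and naturality should be formal consequences of the corresponding properties of the $\SL$-orientation together with the naturality of the isomorphism supplied by the proposition. The main obstacle is multiplicativity: for $(V_i, L_i, \alpha_i)$ with $i = 1, 2$, one must reconcile the $\SL$-Thom class of $(V_1 \oplus V_2) \oplus (L_1 \otimes L_2)^{\vee \otimes 2}$ with the cup product of the $\SL$-Thom classes of $V_i \oplus L_i^{\vee \otimes 2}$. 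The two underlying vector bundles differ by a permutation of summands and by the identification $(L_1 \otimes L_2)^{\vee \otimes 2} \simeq L_1^{\vee \otimes 2} \otimes L_2^{\vee \otimes 2}$, and the comparison of their Thom spaces in $\SH(S)[\eta^{-1}]$ must be made compatible with that identification through repeated use of the proposition.

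For uniqueness, let $\tau'$ be another $\SL^c$-orientation extending the given $\SL$-orientation. Applying the multiplicativity of $\tau'$ to the direct sum $V \oplus L^{\vee \otimes 2}$, equipped with the $\SL^c$-structure built from $(V, L, \alpha)$ and the canonical structure $(L^{\vee \otimes 2}, L^{\vee}, \id)$, yields $\tau_W = \tau'_{V, L, \alpha} \smile \tau'_{L^{\vee \otimes 2}, L^{\vee}, \id}$, where the left-hand side is the $\SL$-Thom class since the composite $\SL^c$-structure on $W$ has $L = \Oc_X$. The second factor on the right is the Thom class of a line bundle, hence invertible; moreover, by applying the proposition to the pair $(\Oc_X, L^{\vee})$, it must correspond to the $\SL$-Thom class of $\Oc_X$, and is therefore determined by the given $\SL$-orientation. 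Dividing then forces $\tau'_{V, L, \alpha} = \tau_{V, L, \alpha}$.
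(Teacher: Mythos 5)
Your route is genuinely different from the paper's: you adjoin $L^{\vee\otimes 2}$ to cancel the determinant and transport the $\SL$-Thom class of $W=V\oplus L^{\vee\otimes 2}$ back along the twisting isomorphism $\Th_X(V\oplus\Oc_X)\simeq\Th_X(V\oplus L^{\vee\otimes 2})$ of \rref{prop:twist_square}, whereas the paper pulls back along $p\colon L^\circ\to X$, where $p^*L$ is tautologically trivialised, and then descends the resulting $\SL$-Thom class along the split coequaliser $L^\circ\times_XL^\circ\rightrightarrows L^\circ\to X$ of \rref{cor:split_line}, using that $p^*$ is (split) injective on $\Aa(-;-)$ by \rref{cor:split_odd:coh}. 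The problem is that your argument stops exactly where the real difficulty begins. For existence, multiplicativity is not a routine verification to be deferred: it is precisely the obstruction identified in \cite[Remark~4.4]{Ana-SL} to upgrading Ananyevskiy's $\SL^c$-Thom isomorphisms to an $\SL^c$-orientation, and ``repeated use of the proposition'' does not resolve it, because \rref{prop:twist_square} only produces an isomorphism of Thom spectra and says nothing about how the chosen $\SL$-Thom classes interact with it. (The paper sidesteps this entirely: every axiom is checked after applying the injective map $p^*$, over $L^\circ$, where all the $\SL^c$-data in sight are induced by $\SL$-data and the axioms reduce to those of the given $\SL$-orientation together with \rref{lemm:indep_triv}.) There is also a smaller slip: $\Th_X(V)\wedge\Th_X(\Oc_X)$ is not $\Th_X(V\oplus\Oc_X)$ for two bundles over the same base $X\neq S$; one must work in $\SH(X)[\eta^{-1}]$, apply the autoequivalence $\Sigma^V$ there, and then push forward by $f_\sharp$.

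The uniqueness argument has a genuine circularity. Multiplicativity of $\tau'$ does give $\tau_W=\tau'_{(V,L,\alpha)}\smile\tau'_{(L^{\vee\otimes 2},L^\vee,\id)}$ with $\tau_W$ known, and cup product with the second factor is injective by the Thom isomorphism axiom; but to divide you must know the value of $\tau'_{(L^{\vee\otimes 2},L^\vee,\id)}$, and this class is itself an unknown $\SL^c$-Thom class of exactly the kind you are trying to pin down (its twisting line bundle $L^\vee$ is not trivialised). The assertion that it ``must correspond to the $\SL$-Thom class of $\Oc_X$'' under the isomorphism of \rref{prop:twist_square} is unjustified --- nothing in the hypothesis that $\tau'$ extends the given $\SL$-orientation forces compatibility with that spectrum-level isomorphism; establishing such a compatibility is essentially equivalent to the theorem. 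Applying your identity to $(L^{\vee\otimes 2},L^\vee,\id)$ itself only yields one equation in two unknowns ($\tau'_{(L^{\otimes 2},L,\id)}\smile\tau'_{(L^{\vee\otimes 2},L^\vee,\id)}$ equals a known class), which does not determine either factor. To close both gaps you need some mechanism that reduces general $\SL^c$-data to $\SL$-data through an injective pullback --- which is what the paper's passage to $L^\circ$ provides; see the proof of \rref{prop:SL_SLc}.
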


These results are obtained as consequences of the following observation:
\begin{proposition*}
Let $E$ be a vector bundle of odd rank over a smooth $S$-scheme $X$, and $E^\circ$ the complement of the zero-section in $E$. 
\begin{enumerate}[(i)]
\item The projection $E^\circ \to X$ admits a section in $\SH(S)[\eta^{-1}]$.

\item The diagram $E^\circ \times_X E^\circ \rightrightarrows E^\circ \to X$ becomes a split coequaliser diagram in $\SH(S)[\eta^{-1}]$.
\end{enumerate}
\end{proposition*}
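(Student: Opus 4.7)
My plan is to deduce (ii) from (i) by naturality, and to prove (i) by combining a direct treatment of the line bundle case via the Hopf map with a projective-bundle reduction in higher odd ranks.

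\textbf{From (i) to (ii).} Write $h \colon E^\circ \to X$ for the projection and $\pi_1, \pi_2 \colon E^\circ\times_X E^\circ \to E^\circ$ for the two projections. Given the section $s \colon X \to E^\circ$ of $h$ supplied by (i), a split coequaliser additionally requires a map $t \colon E^\circ \to E^\circ\times_X E^\circ$ satisfying $\pi_2 t = \id$ and $\pi_1 t = s \circ h$. I plan to construct $t$ by applying (i) to the pulled-back odd-rank bundle $h^{*}E$ on $E^\circ$: one has $(h^{*}E)^\circ = E^\circ\times_X E^\circ$ with bundle projection $\pi_1$, so (i) yields a section of $\pi_1$ which, after swapping the two factors, provides $t$. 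The compatibility $\pi_1 t = s \circ h$ amounts to the obtained section of $\pi_1$ equalling $h^{*}s$; this will follow once (i) is carried out functorially in $(X, E)$.

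\textbf{Proof of (i).} By the purity cofibre sequence $\Sup E^\circ_+ \to \Sup X_+ \to \Th_X(E)$ in $\SH(S)$, a section of $E^\circ \to X$ in $\SH(S)[\eta^{-1}]$ exists if and only if the zero-section map $\Sup X_+ \to \Th_X(E)$ vanishes there. I first treat the line bundle case. For the tautological line bundle $\Oc_{\Pp^1}(-1)$, the projection $\Oc(-1)^\circ \to \Pp^1$ is precisely the Hopf map $\eta \colon \Ab^2\setminus\{0\} \to \Pp^1$ and hence becomes invertible in $\SH(S)[\eta^{-1}]$ by definition, so $\Th_{\Pp^1}(\Oc(-1)) \simeq 0$ there. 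For an arbitrary line bundle $L$ on $X$, I would transfer this via naturality along the classifying map $X \to \BGL_1$. For higher odd rank $n = 2k+1 \geq 3$, I factor $E^\circ \to X$ as $E^\circ \to \Pp(E) \xrightarrow{p} X$ using the canonical identification $E^\circ = \Oc_{\Pp(E)}(-1)^\circ$; the line bundle case on $\Pp(E)$ supplies a section of the first arrow, while for a section of $p$ I would invoke the Becker--Gottlieb transfer, whose composition with $p$ is multiplication by the motivic Euler characteristic $\chi(\Pp^{2k}) = 1 + kh$ with $h = 1 + \langle -1 \rangle$. Morel's relation $\eta h = 0$ forces $h = 0$ in $\SH(S)[\eta^{-1}]$, so $\chi(\Pp^{2k}) = 1$ there, and the transfer produces the required section of $p$.

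\textbf{Main obstacle.} The hardest step is the line bundle case for a general $L$ on a general smooth base. Passing from the tautological example on $\Pp^1$, where the section comes for free from the invertibility of $\eta$, to an arbitrary line bundle requires a careful naturality argument through $\BGL_1$ (or some equivalent direct construction internal to $\SH(S)[\eta^{-1}]$), and this is where the $\eta$-periodic phenomenon genuinely enters the argument.
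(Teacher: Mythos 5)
Your overall architecture matches the paper's: deduce (ii) from a functorial form of (i) by pulling the section back along the projection itself, and reduce higher odd rank to the line bundle case via $E^\circ = \Oc_{\Pp(E)}(-1)^\circ \to \Pp(E) \to X$. For the section of $\Sup \Pp(E)_+ \to \Sup X_+$ your transfer argument ($\chi(\Pp^{2k}) = 1 + kh$ and $h = 0$ once $\eta$ is inverted) is a legitimate alternative, though heavier, route: the paper instead proves outright that $\Sup\Pp^{2k}_+ \to \Un_S$ is an isomorphism by induction over linear subspaces and purity, and then Zariski-localises, which avoids invoking duality/transfer machinery over a general noetherian base.

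The genuine gap is exactly where you flag it, and it is where essentially all the content of the proposition lives: the line bundle case for an arbitrary $L$ on an arbitrary smooth $X$. The proposed ``naturality along the classifying map $X \to \BGL_1$'' cannot work as stated, for two reasons. First, Thom spaces do not pull back along classifying maps: the vanishing of $\Th_{\Pp^\infty}(\Oc(-1))$ in $\SH(S)[\eta^{-1}]$ (which is true) only shows that the composite $\Sup X_+ \to \Th_X(L) \to \Th_{\Pp^N}(\Oc(-1))$ is null, not that the zero-section map $\Sup X_+ \to \Th_X(L)$ is null; and the finite-level spaces $\Th_{\Pp^{2k}}(\Oc(-1))$ are nonzero, so no compactness argument rescues this. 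Second, ``this map is null'' is not a Zariski-local condition, so you cannot simply localise the vanishing statement on $X$ either. The paper's solution is to replace the sought null-homotopy by an explicit global construction: the tautological trivialisation $\tau \colon 1 \xrightarrow{\sim} p^*L$ over $L^\circ$ defines a map $\pi_L \colon \T \wedge (L^\circ)_+ \to \Th_S(L)$, and one proves that the square with vertices $\T\wedge(L^\circ)_+$, $\Th_S(L)$, $\T$ and $*$ is homotopy cocartesian in $\Spcp(S)[\eta^{-1}]$. That statement \emph{is} Zariski-local in the base, and for trivial $L$ it reduces to two explicit facts: $\eta$ factors through $\T\wedge(\Gm)_+$ via multiplication, and $\Th(u^2\id_1)=\id$ by a transvection computation. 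The resulting canonical splitting $\Sup L^\circ \simeq \Un_S \oplus \Sigma^{-2,-1}\Th_S(L)$ is also what makes the second splitting map $t = \id\wedge s$ in (ii) work. Without some such globally defined, locally checkable reformulation, your outline does not close.
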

The first assertion may be viewed as a splitting principle, while the second permits to perform a form of descent. To some extent, this proposition allows us to assume that odd rank vector bundle admit a nowhere-vanishing section (once $\eta$ is inverted); in particular that line bundles are trivial.\\

Finally, we provide applications to the computation in $\SH(S)[\eta^{-1}]$ of the \'etale classifying spaces of certain algebraic groups.
\begin{theorem*}
For $r \in \Nn \smallsetminus \{0\}$, there exist natural maps
\[
S \to \BB\Gm \quad ;\quad S \to \BB \mu_{2r+1} \quad ; \quad \Gm \to \BB \mu_{2r}
\]
which become isomorphisms in $\SH(S)[\eta^{-1}]$.
\end{theorem*}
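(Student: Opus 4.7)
The plan is to combine geometric models of the étale classifying spaces with consequences of the proposition on twisting Thom spaces by squares of line bundles.

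For $\BB\Gm$, I would use the identification $\BB\Gm \simeq \colim_n \Pp^n$ in motivic homotopy. The consequence of the splitting principle highlighted in the introduction says that the structure map of a projective bundle of even relative dimension is an isomorphism in $\SH(S)[\eta^{-1}]$; in particular $\Pp^{2n} \to S$ (associated to the trivial odd-rank bundle $\Oc^{\oplus (2n+1)}$) becomes an isomorphism. Restricting to the cofinal subsystem $\{\Pp^{2n}\}_n$, whose transition maps $\Pp^{2n} \hookrightarrow \Pp^{2n+2}$ are compatible with the projections to $S$, the colimit collapses to $S$, and the natural map $S = \Pp^0 \hookrightarrow \BB\Gm$ realises this isomorphism.

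For the $\mu_n$ cases, I would use the étale classifying space model $\BB\mu_n \simeq \hocolim_N (\Ab^N \smallsetminus \{0\})/\mu_n$, where $\mu_n$ acts by scalar multiplication on $\Ab^N$; via the $n$-th tensor power on sections, the quotient $(\Ab^N \smallsetminus \{0\})/\mu_n$ identifies with $\Oc_{\Pp^{N-1}}(-n)^\circ$. The central technical step would be to extract from the proposition the following statement: for a smooth $X$ and line bundles $L, M$ on $X$, there is an isomorphism $L^\circ \simeq (L \otimes M^{\otimes 2})^\circ$ over $X$ in $\SH(S)[\eta^{-1}]$, obtained by comparing the cofibre sequences $L^\circ \to X \to \Th(L)$ and $(L \otimes M^{\otimes 2})^\circ \to X \to \Th(L \otimes M^{\otimes 2})$ and applying the Thom-space isomorphism furnished by the proposition, compatibly with the zero-section maps.

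Applying this to $\Pp^{N-1}$ with $L = \Oc(-1)$ and $M = \Oc(-r)$ yields $\Oc(-(2r+1))^\circ \simeq \Oc(-1)^\circ = \Ab^N \smallsetminus \{0\}$; passing to the colimit over $N$ gives $\BB\mu_{2r+1} \simeq \colim_N (\Ab^N \smallsetminus \{0\})$, which is motivically contractible, so $\BB\mu_{2r+1} \simeq S$ in $\SH(S)[\eta^{-1}]$, and a basepoint track identifies the natural classifying map $S \to \BB\mu_{2r+1}$ of the trivial torsor with this isomorphism. Similarly, with $L = \Oc$ and $M = \Oc(-r)$ one obtains $\Oc(-2r)^\circ \simeq \Oc^\circ = \Pp^{N-1} \times \Gm$, hence $\BB\mu_{2r} \simeq \BB\Gm \times \Gm \simeq \Gm$ in $\SH(S)[\eta^{-1}]$ using the first step; the natural map $\Gm \to \BB\mu_{2r}$ classifying the Kummer torsor $t \mapsto t^{2r}$ is then realised by the inclusion of the bottom stratum $\Gm = \Oc_{\Pp^0}(-2r)^\circ$ of the colimit.

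The main obstacle will be extracting the identification $L^\circ \simeq (L \otimes M^{\otimes 2})^\circ$ from the proposition: the delicate point is to verify that the Thom-space isomorphism is compatible with the zero-section maps, so that the cofibre-sequence comparison descends to an isomorphism of line-bundle complements. Once this lemma is in place, identifying the specific natural maps of the statement with the constructed isomorphisms amounts to careful basepoint bookkeeping through the colimits.
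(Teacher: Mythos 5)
Your treatment of $\BB\Gm$ is essentially the paper's: with the model $B_m\Gm=\Pp^{m-1}$, the structural morphisms $\Pp^{2n}\to S$ are isomorphisms in $\SH(S)[\eta^{-1}]$ (Proposition \ref{cor:PE_odd}), they strictly commute with the transition maps, and passing to the (homotopy) colimit over the cofinal even-dimensional subsystem gives the claim. The underlying computational input for the $\mu_n$ cases is also the same as the paper's, namely the parity behaviour of $\Th_{\Pp^k}(\Oc(n))$ after inverting $\eta$ (Propositions \ref{prop:Th_Pk} and \ref{prop:P_infty}), which ultimately rests on $\Th_{\Pp^1}(\Oc(-1))=0$ and the twisting-by-squares isomorphisms.

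However, for $\mu_n$ your route through the ``central technical step'' has a genuine gap, and it is precisely the point you flag as delicate. The isomorphism $L^\circ\simeq(L\otimes M^{\otimes 2})^\circ$ you propose is obtained as a fill-in between two cofibre sequences in a triangulated category. Such a fill-in is not canonical, and even granting compatibility with the zero-section maps (which itself requires knowing that the isomorphism of \rref{prop:twist_square} is natural under pullback --- plausible, but not established), you then need these isomorphisms, for varying $N$, to be compatible with the transition maps $\Pp^{N-1}\to\Pp^{N}$ in order to ``pass to the colimit over $N$''. A family of isomorphisms that only commutes with the transitions in the homotopy category does not induce a map of homotopy colimits in any straightforward way; and even if one produces some abstract isomorphism $\Sup\BB\mu_{2r+1}\simeq\Un_S$ (e.g.\ via the cofibre-of-shift presentation of a sequential hocolim), the theorem asserts that a \emph{specific} natural map --- the classifying map of the trivial torsor, resp.\ of the Kummer torsor --- is an isomorphism, and the non-canonical fill-ins are exactly what prevents you from identifying your isomorphism with that map. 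The paper sidesteps all of this by never leaving the world of strictly defined maps: the cofibre sequence $(\BB\mu_n)_+\to(\BB\Gm)_+\to\Th_{\Pp^\infty}(\Oc(-n))$ exists at the level of spaces and commutes with the colimit on the nose; for $n$ odd the third term vanishes in $\SH(S)[\eta^{-1}]$, so the natural map $\Sup\BB\mu_n\to\Sup\BB\Gm$ is itself an isomorphism; for $n$ even one compares with the strictly defined sub-cofibre sequence $(\Gm)_+\to S_+\to\T$ of \eqref{diag:BH_BG} and applies two-out-of-three directly to the map $j$ classifying the Kummer torsor. To repair your argument you would either have to carry out a careful naturality analysis of the twisting isomorphisms and of the fill-ins, or (better) replace the step by the cofibre-sequence argument above.
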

From this theorem, we deduce a computation in $\SH(S)[\eta^{-1}]$ of the classifying space of an arbitrary diagonalisable group. We obtain that all invariants of torsors under a diagonalisable group with values in an $\eta$-periodic cohomology theory arise from a single invariant of $\mu_2$-torsors. In the appendix, we present an explicit construction of that invariant, exploiting the identification of the group $\mu_2$ with the orthogonal group $\Oo_1$.\\

Next, we obtain ``relative'' computations in $\SH(S)[\eta^{-1}]$ of certain \'etale classifying spaces in terms of others:
\begin{theorem*}
For $n\in \Nn\smallsetminus \{0\}$ and $r\in \Nn$, the natural morphisms
\[
\BSL_n \to \BSL^c_n \quad ; \quad \BGL_{2r} \to \BGL_{2r+1} \quad ; \quad \BSL_{2r+1} \to \BGL_{2r+1}
\]
become isomorphisms in $\SH(S)[\eta^{-1}]$.
\end{theorem*}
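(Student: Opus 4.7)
The plan is to handle the three maps in turn, using the main odd-rank Proposition for $\BGL_{2r} \to \BGL_{2r+1}$ and the previously established $S \simeq \BB\Gm$ (and $S \simeq \BB\mu_{2r+1}$) in $\SH(S)[\eta^{-1}]$ for the other two.

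For $\BGL_{2r} \to \BGL_{2r+1}$, the map $E \mapsto E \oplus \Oc$ identifies $\BGL_{2r}$ with the complement $\mathcal{E}^\circ$ of the zero section in the tautological rank-$(2r+1)$ bundle $\mathcal{E}$ on $\BGL_{2r+1}$, since the homotopy fibre at a rank-$(2r+1)$ bundle $F$ is the scheme of nowhere-vanishing sections $F^\circ$. Writing $\BGL_{2r+1}$ as a filtered colimit of smooth Grassmannians so that the main Proposition applies stagewise, part~(i) provides a section of $\mathcal{E}^\circ \to \BGL_{2r+1}$ and part~(ii) gives a split coequalizer presentation; together these realize the intuition that odd rank bundles have a nowhere-vanishing section in $\SH(S)[\eta^{-1}]$, so that every rank-$(2r+1)$ bundle is equivalent to a rank-$2r$ bundle plus a trivial summand, and the desired isomorphism follows.

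For $\BSL_{2r+1} \to \BGL_{2r+1}$ and $\BSL_n \to \BSL^c_n$, I would use the group-scheme extensions
\[
1 \to \SL_{2r+1} \to \GL_{2r+1} \xrightarrow{\det} \Gm \to 1 \quad\text{and}\quad 1 \to \SL_n \to \SL^c_n \xrightarrow{(g,z)\mapsto z} \Gm \to 1
\]
to produce fibre sequences of étale classifying spaces with base $\BB\Gm$. Since $\BB\Gm \simeq S$ in $\SH(S)[\eta^{-1}]$, the base becomes trivial (in reduced suspension spectrum) after inverting $\eta$. For the second sequence, the splitting $z \mapsto (\mathrm{diag}(z^2, 1, \ldots, 1), z)$ of $\SL^c_n \to \Gm$ makes the fibre sequence split, and the triviality of $\BB\Gm$ yields the isomorphism for all $n$. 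For the first sequence, the scalar inclusion $\Gm \hookrightarrow \GL_{2r+1}$, $\lambda \mapsto \lambda I$, has composition with $\det$ equal to the $(2r+1)$-st power map on $\Gm$; the previous computation $\BB\mu_{2r+1} \simeq S$ shows this odd-power map is invertible after inverting $\eta$, providing a splitting in $\SH(S)[\eta^{-1}]$ and explaining the odd-parity restriction.

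The main technical obstacle is the transition from fibre sequences of étale classifying spaces to cofibre sequences of suspension spectra in $\SH(S)[\eta^{-1}]$: triviality of $\BB\Gm$ alone does not suffice, and one must either construct actual splittings (as in the last two cases) or exploit the odd-rank Proposition directly. A second, more delicate obstacle is extracting a genuine isomorphism (not merely a split epimorphism) from the section-plus-split-coequalizer data of the first step, for which I expect to need a descent argument exploiting the universality of $\mathcal{E}$ over $\BGL_{2r+1}$, in combination with the twist-by-squares Proposition and the $\SL$-versus-$\SL^c$-orientation theorem already proved in the paper.
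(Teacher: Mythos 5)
Your overall architecture --- identify $\BGL_{2r}$ with the zero-section complement of the tautological bundle, and treat the other two maps via surjective characters to $\Gm$ with contractible classifying space --- is close to the paper's setup, but in all three cases the step that actually proves the \emph{isomorphism} is missing, and the reasoning you substitute for it is invalid. The common point is this: each of the three maps sits in a cofibre sequence whose third term is a Thom space. For $\mathcal{E}^\circ \to \BGL_{2r+1}$ the cofibre is the colimit of the $\Th_{\Gr(2r+1,s)}(\Uni_{2r+1})$, and for the kernel $H$ of a surjective character $\chi$ of $G$ the cofibre of $(\BB H)_+\to(\BB G)_+$ is $\Th_{\BB G}(C(\chi))$, the Thom space of the line bundle on $\BB G$ associated with $\chi$ (see \eqref{eq:BH_BG}). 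So the map is an isomorphism if and only if that Thom space vanishes in $\SH(S)[\eta^{-1}]$. What \rref{prop:split_odd} and \rref{cor:split_line} give is only that this Thom space splits off, i.e.\ that the map is a split epimorphism; and this splitting exists \emph{unconditionally}, with no parity hypothesis. In particular neither the section of $\mathcal{E}^\circ\to\BGL_{2r+1}$, nor the split coequalizer, nor a splitting of the group extension, nor the contractibility of $\BB\Gm$ can detect the difference between the even and odd cases. The decisive counterexample to your reasoning pattern for the last two maps is $\BSL_{2r}\to\BGL_{2r}$: it is the kernel of the surjective character $\det$, the extension splits via $\lambda\mapsto\mathrm{diag}(\lambda,1,\dots,1)$, the base $\BB\Gm$ is contractible, yet the map is \emph{not} an isomorphism (the paper only obtains a section, \eqref{eq:BSL_BSL_split}), because $\Th_{\BGL_{2r}}(C(\det_{2r}))$ is nonzero. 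Likewise, at a finite level, $\Th_{\Pp^2}(\Oc(-1))\simeq\Sigma^{6,3}\Un_S\neq 0$ by \rref{prop:Th_Pk}, so $\Ab^3\smallsetminus\{0\}=\mathcal{E}^\circ\to\Gr(1,3)$ is not an isomorphism even though it admits a section: your ``stagewise'' application of the odd-rank proposition cannot work, since the vanishing only occurs in the colimit.

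What is actually needed, and what the paper supplies, is the computation of these Thom spaces. The engine is \rref{lemm:Th_odd} (vanishing of $\Th_{\Pp(E)}(\bigoplus_i\Oc(d_i)\otimes V_i)$ when $\rank E$ is even and $\sum_i d_i\rank V_i$ is odd, resting on \rref{prop:Th_Pk}): one pulls the relevant line bundle back to a projective or flag bundle over $B_m\GL_n$ of \emph{even} relative dimension (so that the pullback is an isomorphism by \rref{cor:PE_odd}), checks that the resulting $\Oc(1)$-degree is odd, and concludes that the Thom space vanishes --- but only for appropriate parities of the level $m$, after which one passes to the colimit. For $\BSL_n\to\BSL^c_n$ this is done on $B_m(\GL_n\times\Gm)$ with the characters $\nu_n$ and $q_n$; for $\BSL_{2r+1}\to\BGL_{2r+1}$ on the flag variety $\Gr(2r\subset 2r+1,s)$ using $\det\Uni_{2r+1}\simeq p^*(\det\Uni_{2r})\otimes\Oc(-1)$; and for $\BGL_{2r}\to\BGL_{2r+1}$ the paper factors the map through the same flag variety via two projective bundles of even relative dimension. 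None of these computations appears in your proposal, and the tools you list as substitutes (the twist-by-squares proposition, the $\SL$-versus-$\SL^c$ orientation theorem, ``descent exploiting universality'') do not produce them. Your identification $\BGL_{2r}\simeq\mathcal{E}^\circ$ is a reasonable alternative starting point for the second map, but it reduces the statement to the vanishing of $\colim_m\Th_{B_m\GL_{2r+1}}(\Uni_{2r+1})$ in $\SH(S)[\eta^{-1}]$, which still has to be proved by the flag-variety and parity argument.
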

The first result can be viewed as a companion of the theorem on orientations stated above, and cements the idea that the groups $\SL_n^c$ and $\SL_n$ are the same in the eyes of $\eta$-periodic stable homotopy theory. The second (resp.\ third) result expresses the fact that odd-dimensional vector bundles behave as if they had a nowhere-vanishing section (resp.\ trivial determinant) from the point of view of $\eta$-periodic stable homotopy theory.

The morphism $\BSL_{2r} \to \BGL_{2r}$ is not an isomorphism in $\SH(S)[\eta^{-1}]$, but we show that it admits a section, expressing the fact that every invariant (with values in an $\eta$-periodic cohomology theory) of even-dimensional vector bundles is determined by its value on those bundles having trivial determinant.\\

Finally, let us mention that the results of this paper serve as a starting point for the paper \cite{hyp} on Pontryagin classes.\\

\noindent \textbf{Acknowledgments}.
I am grateful to the referee for his/her suggestions, and in particular for noticing that the coequaliser diagram of \rref{cor:split_line} is split.

\section{Notation and basic facts}
\begin{para}
Throughout the paper, we work over a noetherian base scheme $S$ of finite dimension. The category of smooth separated $S$-schemes of finite type will be denoted by $\Sm_S$. All schemes will be implicitly assumed to belong to $\Sm_S$, and the notation $\Ab^n,\Pp^n,\Gm$ will refer to the corresponding $S$-schemes. We will denote by $1$ the trivial line bundle over a given scheme in $\Sm_S$.
\end{para}

\begin{para}
We will use the $\Ab^1$-homotopy theory introduced by Morel--Voevodsky \cite{MV-A1}. We will denote by $\Spc(S)$ the category of motivic spaces (i.e.\ simplicial presheaves on $\Sm_S$), by $\Spcp(S)$ its pointed version, and by $\Spt(S)$ the category of $\T$-spectra, where $\T= \Ab^1/\Gm$. We endow these with the motivic equivalences, resp.\ stable motivic equivalences, and denote by $\Hot(S), \Hop(S), \SH(S)$ the respective homotopy categories. We refer to e.g.\ \cite[Appendix~A]{PMR} for more details.

We have an infinite suspension functor $\Su\colon \Spcp(S) \to \Spt(S)$. Composing with the functor $\Spc(S) \to \Spcp(S)$ adding an external base-point, we obtain a functor $\Sup \colon \Spc(S) \to \Spt(S)$.

The spheres are denoted as usual by $S^{p,q} \in \Spcp(S)$ for $p,q\in \Nn$ with $p\geq q$ (where $\T\simeq S^{2,1}$). The motivic sphere spectrum $\Sup S$ will be denoted by $\Un_S \in \Spt(S)$. When $A$ is a motivic spectrum, we denote its $(p,q)$-th suspension by $\Sigma^{p,q}A = S^{p,q} \wedge A$. This yields functors $\Sigma^{p,q} \colon \SH(S) \to \SH(S)$ for $p,q\in \Zz$.
\end{para}

\begin{para}
When $E\to X$ is a vector bundle with $X \in \Sm_S$, we denote by $E^{\circ}=E \smallsetminus X$ the complement of the zero-section. The Thom space of $E$ is the pointed motivic space $\Th_X(E) = E/E^\circ$. We will write $\Th_X(E) \in \Spt(S)$ instead of $\Su \Th_X(E)$, in order to lighten the notation. When $g \colon Y \to X$ is a morphism in $\Sm_S$, we will usually write $\Th_Y(E)$ instead of $\Th_Y(g^*E)$. Since $E \to X$ is a weak equivalence, we have a cofiber sequence in $\Spcp(S)$, where $p \colon E^\circ \to X$ is the projection,
\begin{equation}
\label{eq:E_circ_dist}
(E^\circ)_+ \xrightarrow{p_+} X_+ \to \Th_X(E).
\end{equation}
If $F \to S$ is a vector bundle and $f \colon X \to S$ the structural morphism, we have by \cite[Proposition~3.2.17~(1)]{MV-A1} a natural identification in $\Spcp(S)$
\begin{equation}
\label{eq:Thom_sum}
\Th_X(E \oplus f^*F) = \Th_X(E) \wedge \Th_S(F).
\end{equation}

When $V \to S$ is a vector bundle, we denote by $\Sigma^V \colon \SH(S) \to \SH(S)$ the derived functor induced by $A \mapsto A \wedge \Th_S(V)$. It is an equivalence of categories, with inverse denoted by $\Sigma^{-V}$.
\end{para}

\begin{para}
\label{p:purity}
Let $i \colon Y\to X$ be a closed immersion in $\Sm_S$, with normal bundle $N \to Y$, and open complement $u\colon U \to X$. The purity equivalence $X/U \simeq \Th_Y(N)$ (see e.g.\ \cite[Theorem~3.2.23]{MV-A1}) yields a cofiber sequence in $\Spcp(S)$
\begin{equation}
\label{eq:purity}
U_+ \xrightarrow{u_+} X_+ \to \Th_Y(N).
\end{equation}
More generally, if $V \to X$ is a vector bundle, we have a  cofiber sequence in $\Spcp(S)$
\[
\Th_U(V) \to \Th_X(V) \to \Th_Y(N \oplus i^*V).
\]
This may be deduced from \eqref{eq:purity} by first reducing to the case $X=S$ using the functor $f_\sharp$ of \rref{p:sharp} below, and then applying the functor $- \wedge \Th_X(V)$ (both of which preserve homotopy colimits), in view of \eqref{eq:Thom_sum}.
\end{para}

\begin{para}
\label{p:Thom_funct}
Let $\varphi \colon E \xrightarrow{\sim} F$ be an isomorphism of vector bundles over $X \in \Sm_S$. Then $\varphi$ induces a weak equivalence in $\Spcp(S)$ (and $\Spt(S)$)
\[
\Th(\varphi) \colon \Th_X(E) \to \Th_X(F).
\]
If $\psi \colon F \xrightarrow{\sim} G$ is an isomorphism of vector bundles over $X$, we have
\begin{equation}
\label{eq:T_mult}
\Th(\psi \circ \varphi) = \Th(\psi) \circ \Th(\varphi).
\end{equation}

If $X=S$ and $f\in \End_{\SH(S)}(\Un_S)$, then we have in $\SH(S)$
\begin{equation}
\label{eq:Sigma_T}
(\Sigma^Ff) \circ \Th(\varphi) = \Th(\varphi) \circ (\Sigma^Ef) \colon \Th_S(E) \to \Th_S(F).
\end{equation}
(This follows from the fact that, as morphisms $\Un_S \wedge \Th_S(E) \to  \Un_S \wedge \Th_S(F)$\[
(f \wedge \id_{\Th_S(F)}) \circ (\id_{\Un_S} \wedge \Th(\varphi)) = f \wedge \Th(\varphi) = (\id_{\Un_S} \wedge \Th(\varphi)) \circ (f \wedge \id_{\Th_S(E)}).)
\]
\end{para}

\begin{para}
\label{p:langle_rangle}
(See \cite[Lemma~6.3.4]{Morel-Intro_A1}.)
Let $X \in \Sm_S$ and $u\in H^0(X,\Gm)$. Consider the automorphism $u \id_1 \colon 1 \to 1$ of the trivial line bundle over $X$, and set
\[
\langle u \rangle = \Sigma^{-2,-1} \Th(u \id_1) \in \Aut_{\SH(S)}(\Sup X).
\]
It follows from \eqref{eq:T_mult} that
\begin{equation}
\label{eq:<>_mult}
\langle uv \rangle = \langle u \rangle \circ \langle v \rangle \quad \text{ for $u,v \in H^0(X,\Gm)$.}
\end{equation}
When $X=S$ and $A \in \SH(S)$, we will denote again by $\langle u \rangle \in \Aut_{\SH(S)}(A)$ the morphism
\[
A = \Un_S \wedge A \xrightarrow{\langle u \rangle \wedge \id_A} \Un_S \wedge A=A.
\]
If $f\colon A \to B$ is a morphism in $\SH(S)$, then
\begin{equation}
\label{eq:<>_commutes}
f \circ \langle u \rangle = \langle u \rangle \circ f.
\end{equation}
\end{para}

\begin{para}
\label{p:eta}
We denote by $\eta \colon \Ab^2 \smallsetminus \{0\} \to \Pp^1$ in $\Spcp(S)$ the map $(x,y) \mapsto [x:y]$, where $\Ab^2\smallsetminus \{0\}$ is pointed by $(1,1)$ and $\Pp^1$ by $[1:1]$.
\end{para}

\begin{para}
We will consider the categories $\Spcp(S)[\eta^{-1}]$ and $\Spt(S)[\eta^{-1}]$ obtained by monoidally inverting the map $\eta$ of \rref{p:eta}, which can be constructed as left Bousfield localisations, as discussed in \cite[\S6]{Bachmann-real}. Their respective homotopy categories will be denoted by $\Hop(S)[\eta^{-1}]$ and $\SH(S)[\eta^{-1}]$, and we will usually omit the mention of the localisation functors.

A spectrum $A \in \Spt(S)$ is called \emph{$\eta$-periodic} if the map
\begin{equation}
\label{eq:mult_eta}
A \wedge \Su (\Ab^2 \smallsetminus \{0\}) \xrightarrow{\id \wedge \Su \eta} A \wedge \Su \Pp^1
\end{equation}
is an isomorphism in $\SH(S)$. The full subcategory of such objects in $\Spt(S)$ can be identified $\Spt(S)[\eta^{-1}]$.
\end{para}

\begin{para}
\label{p:sharp}
Let $X \in \Sm_S$ with structural morphism $f\colon X \to S$. Then there are Quillen adjunctions
\[
f_\sharp \colon \Spcp(X) \leftrightarrows \Spcp(S) \colon f^* \quad ; \quad f_\sharp \colon \Spt(X) \leftrightarrows \Spt(S) \colon f^*.
\]
The functor $f^*$ is induced by base-change, while $f_\sharp$ arises from viewing a smooth $X$-scheme as a smooth $S$-scheme by composing with $f$. These induce Quillen adjunctions
\[
f_\sharp \colon \Spcp(S)[\eta^{-1}] \leftrightarrows \Spcp(S)[\eta^{-1}] \colon f^* \quad ; \quad f_\sharp \colon \Spt(X)[\eta^{-1}] \leftrightarrows \Spt(S)[\eta^{-1}] \colon f^*.
\]
We will also use the notation $f^*,f_{\sharp}$ for the derived functors on the respective homotopy categories.
\end{para}

\begin{para}
\label{p:hyperdescent}
(See e.g.\ \cite{DHI}.) Let $V \to X$ be a vector bundle with $X \in \Sm_S$, and $U_\alpha$ an open covering of $X$. Then the map
\[
\hocolim \Big( \cdots\mathrel{\substack{\textstyle\rightarrow\\[-0.6ex]
                      \textstyle\rightarrow \\[-0.6ex]
                      \textstyle\rightarrow}} \coprod_{\alpha,\beta} \Th_{U_\alpha \cap U_\beta}(V|_{U_\alpha \cap U_\beta}) \rightrightarrows \coprod_\alpha \Th_{U_\alpha}(V|_{U_\alpha})\Big) \to \Th_X(V)
\]
is a weak equivalence.
\end{para}

\section{Splitting \texorpdfstring{$\Gm$}{Gm}-torsors}
\numberwithin{theorem}{subsection}
\numberwithin{lemma}{subsection}
\numberwithin{proposition}{subsection}
\numberwithin{corollary}{subsection}
\numberwithin{example}{subsection}
\numberwithin{definition}{subsection}
\numberwithin{remark}{subsection}

\subsection{Local splitting}
In this section we consider the schemes $\Ab^1,\Gm,\Pp^1,\Ab^2\smallsetminus \{0\}$ as pointed motivic spaces, respectively via $1,1,[1:1],(1,1)$. We recall that $\T=\Ab^1/\Gm$. We have a chain of weak equivalences 
\begin{equation}
\label{eq:T_P1}
\T = \Ab^1/\Gm \xrightarrow{\sim} \Pp^1/\Ab^1 \xleftarrow{\sim} \Pp^1,
\end{equation}
where the first arrow is induced by the immersion $\Ab^1 \to \Pp^1, x \mapsto [x:1]$, and the quotient $\Pp^1/\Ab^1$ is taken with respect to the immersion $\Ab^1 \to \Pp^1, y \mapsto [1:y]$.\\

We first recall a well-known fact (see e.g.\ \cite[Lemma~6.2]{Ana-SL} for a stable version):
\begin{lemma}
\label{lemm:square}
Let $u \in H^0(S,\Gm)$. Then the morphism $\Th(u^2 \id_1) \colon \T \to \T$ (see \rref{p:Thom_funct}) coincides with the identity in $\Hop(S)$.
\end{lemma}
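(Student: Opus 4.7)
The plan is to reduce, via the equivalence $T \simeq \Pp^1$ of \eqref{eq:T_P1}, to showing that a certain self-map of $\Pp^1$ is unstably null-homotopic; this self-map will turn out to be the action of a diagonal $\SL_2$-matrix, which is $\Ab^1$-connected to the identity through a chain of elementary unipotent matrices.

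First I would unpack the identification \eqref{eq:T_P1}. Multiplication by $u^2$ on $\Ab^1$ extends to the self-map $\varphi \colon \Pp^1 \to \Pp^1$, $[x:y] \mapsto [u^2 x : y]$, which preserves both standard affine charts (acting by $u^2 \cdot$ on $\{[x:1]\}$ and by $u^{-2} \cdot$ on $\{[1:y]\}$). A short diagram chase through \eqref{eq:T_P1} then shows that under $T \simeq \Pp^1$, the morphism $\Th(u^2 \id_1)$ corresponds to $\varphi$. The projective-coordinate identity $[u^2 x : y] = [u x : u^{-1} y]$ exhibits $\varphi$ as the action on $\Pp^1$, via the standard $\GL_2$-action, of the diagonal matrix $d = \mathrm{diag}(u, u^{-1}) \in \SL_2(S)$.

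Second, I would produce a chain of $\Ab^1$-homotopies in $\SL_2(S)$ connecting $d$ to the identity. The classical identity
\[
d = \begin{pmatrix} 1 & u \\ 0 & 1 \end{pmatrix}\begin{pmatrix} 1 & 0 \\ -u^{-1} & 1 \end{pmatrix}\begin{pmatrix} 1 & u \\ 0 & 1 \end{pmatrix}\begin{pmatrix} 0 & -1 \\ 1 & 0 \end{pmatrix},
\]
combined with a further decomposition of the last factor into three elementary unipotent matrices, writes $d$ as a product of elementary matrices of the form $E_{ij}(a)$. Each such factor admits the tautological $\Ab^1$-path $t \mapsto E_{ij}(ta)$ from the identity, and concatenating the resulting homotopies via the action $\SL_2 \times \Pp^1 \to \Pp^1$ yields an $\Ab^1$-homotopy from $\varphi$ to $\id_{\Pp^1}$ in $\Hop(S)$. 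Combined with Step 1, this gives the desired equality $\Th(u^2 \id_1) = \id_T$ in $\Hop(S)$.

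The main technical point is Step 1: one must verify that the two arrows in \eqref{eq:T_P1} (which use two different embeddings $\Ab^1 \hookrightarrow \Pp^1$) together translate the map $\Th(u^2 \id_1)$ into precisely the self-map $\varphi$. The rest is classical, though one has to arrange the scalars in the projective rescaling $[u^2 x : y] = [u x : u^{-1} y]$ so that the resulting matrix genuinely lies in $\SL_2(S)$ and not merely in $\GL_2(S)$, which is exactly where the hypothesis that we are squaring $u$ is used.
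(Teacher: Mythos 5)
Your proposal is correct and follows essentially the same route as the paper: transport $\Th(u^2\id_1)$ through \eqref{eq:T_P1} to the self-map $[x:y]\mapsto[ux:u^{-1}y]$ of $\Pp^1$ given by $\mathrm{diag}(u,u^{-1})\in\SL_2$, and kill it by factoring that matrix into elementary (unipotent) matrices, each $\Ab^1$-homotopic to the identity. The only difference is cosmetic — the paper uses a particular five-transvection factorisation where you use the standard $w(u)$-decomposition plus a factorisation of the Weyl element.
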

\begin{proof}
The endomorphism $\varphi \colon \Pp^1 \to \Pp^1$ given by $[x:y] \mapsto [u^2x:y]=[ux:u^{-1}y]$ is induced by the matrix
\[
A=
\begin{pmatrix}
u & 0\\
0 & u^{-1}\\
\end{pmatrix}.
\]
Since
\[
A
=
\begin{pmatrix}
1 & u\\
0 & 1\\
\end{pmatrix}
\begin{pmatrix}
1 & 0\\
-u^{-1} & 1\\
\end{pmatrix}
\begin{pmatrix}
1 & u-1\\
0 & 1\\
\end{pmatrix}
\begin{pmatrix}
1 & 0\\
1 & 1\\
\end{pmatrix}
\begin{pmatrix}
1 & -1\\
0 & 1\\
\end{pmatrix}
\]
is a product of transvections, the endomorphism $\varphi$ induces the identity endomorphism of $\Pp^1_+$ in $\Hop(S)$ (see e.g.\ \cite[Lemma~1]{Ana-Pushforwards}). The map $\varphi$ stabilises the copies of $\Ab^1$ given by $x \mapsto [x:1]$ and $y \mapsto [1:y]$, and restricts to $u^2 \id_1$ on the former. Thus the statement follows from the isomorphism \eqref{eq:T_P1}.
\end{proof}

\begin{para}
Excision yields isomorphisms in $\Hop(S)$
\[
(\Ab^2 \smallsetminus \{0\})/(\Gm \times \Ab^1) \xleftarrow{\sim} (\Ab^1 \times \Gm)/(\Gm \times \Gm) \xrightarrow{\sim} (\Ab^1/\Gm) \wedge (\Gm)_+ = \T \wedge (\Gm)_+
\]
Composing with the quotient $\Ab^2 \smallsetminus \{0\} \to (\Ab^2 \smallsetminus \{0\})/(\Ab^1 \times \Gm)$, this yields a map
\begin{equation}
\label{eq:theta}
\Ab^2 \smallsetminus \{0\} \to \T \wedge (\Gm)_+.
\end{equation}
\end{para}

\begin{lemma}
\label{lemm:cofiber}
The projection $p\colon \Gm \to S$ induces a cofiber sequence in $\Spcp(S)$
\[
\Ab^2 \smallsetminus \{0\} \xrightarrow{\eqref{eq:theta}} \T \wedge (\Gm)_+ \xrightarrow{\id \wedge p_+} \T
\]
\end{lemma}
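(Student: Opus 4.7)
My plan is to realise the claimed sequence as the rotation of an excision (purity) cofiber sequence, and to identify its connecting map with $\id \wedge p_+$ by comparison with the analogous purity cofiber sequence associated to the inclusion $\Ab^1 \times \{0\} \hookrightarrow \Ab^2$.

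The closed immersion $\Gm \times \{0\} \hookrightarrow \Ab^2 \smallsetminus \{0\}$ has open complement $\Ab^1 \times \Gm$ (which contains the basepoint $(1,1)$) and trivial rank-one normal bundle; by \rref{p:purity} and \eqref{eq:Thom_sum}, this produces in $\Spcp(S)$ a cofiber sequence
\[
\Ab^1 \times \Gm \to \Ab^2 \smallsetminus \{0\} \to \T \wedge (\Gm)_+
\]
whose third map is, by unwinding the purity identification, exactly the map $\theta$ of \eqref{eq:theta}. Similarly, the closed immersion $\Ab^1 \times \{0\} \hookrightarrow \Ab^2$ has the same open complement and trivial rank-one normal bundle, giving a cofiber sequence
\[
\Ab^1 \times \Gm \to \Ab^2 \to \T \wedge (\Ab^1)_+.
\]
By naturality of purity along the open immersion $\Ab^2 \smallsetminus \{0\} \hookrightarrow \Ab^2$, these two cofiber sequences assemble into a commuting ladder whose leftmost vertical map is the identity, middle vertical map is the open inclusion, and rightmost vertical map is $\id \wedge j_+ \colon \T \wedge (\Gm)_+ \to \T \wedge (\Ab^1)_+$, where $j \colon \Gm \hookrightarrow \Ab^1$ is the open immersion.

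The key remaining step is to exploit the motivic contractibility of $\Ab^1$ and $\Ab^2$. The weak equivalence $(\Ab^1)_+ \xrightarrow{\sim} S_+$ identifies $\T \wedge (\Ab^1)_+$ with $\T$ and $\id \wedge j_+$ with $\id \wedge p_+$, and $\Ab^2$ (the middle term of the second cofiber sequence) is itself contractible in $\Hop(S)$. Rotating both sequences, the latter observation forces the bottom connecting map $\T \wedge (\Ab^1)_+ \to \Sigma(\Ab^1 \times \Gm)$ to be a weak equivalence, and commutativity of the right square of the rotated ladder then exhibits the top connecting map $\T \wedge (\Gm)_+ \to \Sigma(\Ab^1 \times \Gm)$ as the composition of $\id \wedge p_+ \colon \T \wedge (\Gm)_+ \to \T$ with an equivalence $\T \xrightarrow{\sim} \Sigma(\Ab^1 \times \Gm)$. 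Replacing the last term of the rotated top cofiber sequence via the inverse of this equivalence yields the sequence in the statement. The main bookkeeping obstacle is to verify that the naturality map on Thom spaces really is $\id \wedge j_+$; this ultimately comes down to the fact that the trivial normal bundle of $\Gm \times \{0\} \subset \Ab^2 \smallsetminus \{0\}$ is the restriction of that of $\Ab^1 \times \{0\} \subset \Ab^2$.
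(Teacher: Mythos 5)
Your argument is correct in structure and does prove the lemma, though it is organised differently from the paper's. The paper writes each space in the zigzag defining \eqref{eq:theta} as the middle term of a rotated cofiber sequence $X \to X/A \to S^{1,0}\wedge A$ and stacks four such rows, the top one being the Puppe sequence for $\Gm\times\Ab^1\subset\Ab^2\smallsetminus\{0\}$ and the bottom one being $\Ab^1\to\Ab^1/\Gm\to S^{1,0}\wedge\Gm$; the connecting map of the top row is identified with $\id\wedge p_+$ by passing down the ladder, the contractibility of $\Ab^1$ entering through the bottom row exactly as the contractibility of $\Ab^2$ enters in your argument. You instead compare the purity sequence inside $\Ab^2\smallsetminus\{0\}$ with the purity sequence inside $\Ab^2$ along the open immersion $\Ab^2\smallsetminus\{0\}\hookrightarrow\Ab^2$, and use the contractibility of $\Ab^2$ to turn the second connecting map into an equivalence. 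Both proofs amount to identifying the connecting map of a Puppe sequence by naturality; yours trades the paper's explicit four-row diagram for the functoriality of the purity equivalence along an open immersion restricting to an open immersion of the closed centres with compatible (trivial) normal bundles --- a standard fact, not stated in the paper, which you correctly single out as the remaining bookkeeping.

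One correction of bookkeeping is needed. With your choice of closed centre $\Gm\times\{0\}$, the collapse map you produce is $\Ab^2\smallsetminus\{0\}\to(\Ab^2\smallsetminus\{0\})/(\Ab^1\times\Gm)$. But \eqref{eq:theta} --- as fixed by the displayed excision $(\Ab^1\times\Gm)/(\Gm\times\Gm)\xrightarrow{\sim}(\Ab^2\smallsetminus\{0\})/(\Gm\times\Ab^1)$ and as used in \rref{lemm:eta} and in the paper's proof of this lemma (the surrounding prose contains a transposition of the two factors) --- is the collapse of the \emph{other} open subscheme $\Gm\times\Ab^1$, i.e.\ the purity map for the closed centre $\{0\}\times\Gm$. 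So your third map is not literally $\theta$ but its conjugate under the swap of the two coordinates. This does not invalidate the proof: the swap is an automorphism of $\Ab^2\smallsetminus\{0\}$ intertwining the two collapse maps and compatible with $\id\wedge p_+$ on the targets, so the two three-term diagrams are isomorphic and one is a cofiber sequence if and only if the other is. To obtain the statement on the nose, however, you should take the closed centre $\{0\}\times\Gm$ with open complement $\Gm\times\Ab^1$ and compare with $\{0\}\times\Ab^1\subset\Ab^2$ (whose open complement is again $\Gm\times\Ab^1$); the rest of your argument then goes through verbatim.
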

\begin{proof}
This follows from the consideration of the following commutative diagram in $\Spcp(S)$, whose rows are cofiber sequences
\[ \xymatrix{
\Ab^2 \smallsetminus \{0\} \ar[r]  & (\Ab^2 \smallsetminus \{0\})/(\Gm \times \Ab^1) \ar[r] & S^{1,0}\wedge (\Gm \times \Ab^1) \ar@/^6.0pc/[ddd]^{\sim}\\ 
\Ab^1 \times \Gm \ar[r] \ar[u]\ar[d] & (\Ab^1 \times \Gm)/(\Gm \times \Gm) \ar[u]_{\sim} \ar[d]^{\sim}\ar[r] & S^{1,0} \wedge (\Gm \times \Gm) \ar[u]\ar[d]\\
\Ab^1 \wedge (\Gm)_+ \ar[r] \ar[d]_{\id \wedge p_+}& (\Ab^1/\Gm) \wedge (\Gm)_+ \ar[r] \ar[d]_{\id \wedge p_+}& S^{1,0} \wedge \Gm \wedge (\Gm)_+ \ar[d]_{\id \wedge \id \wedge p_+}\\
\Ab^1 \ar[r] & \Ab^1/\Gm \ar[r]^{\sim} & S^{1,0} \wedge \Gm
}\]
and where the curved arrow is the weak equivalence induced by the projection $\Gm \times \Ab^1 \to \Gm$.
\end{proof}

The next lemma is reminiscent of \cite[Theorem~3.8]{Ana-MSL}:
\begin{lemma}
\label{lemm:eta}
The morphism $\eta$ of \rref{p:eta} factors in $\Hop(S)$ as
\[
\Ab^2 \smallsetminus \{0\} \xrightarrow{\eqref{eq:theta}} \T \wedge (\Gm)_+ \xrightarrow{\Th(t^{-1}\id_1)} \T \wedge (\Gm)_+ \xrightarrow{\id \wedge p_+} \T \xrightarrow{\eqref{eq:T_P1}} \Pp^1,
\]
where $t \in H^0(\Gm,\Gm)$ is the tautological section and $p\colon \Gm \to S$ the projection (and $\T \wedge (\Gm)_+$ is identified with $\Th_{\Gm}(1)$).
\end{lemma}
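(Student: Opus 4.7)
The plan is to post-compose both sides of the claimed equality with the weak equivalence $\Pp^1 \xrightarrow{\sim} \Pp^1/\Ab^1$ from \eqref{eq:T_P1}, thereby eliminating the target zigzag and reducing the statement to the equality in $\Hop(S)$ of two honest maps
\[
\alpha, \beta \colon \Ab^2 \smallsetminus \{0\} \to \Pp^1/\Ab^1.
\]
Here $\alpha$ is $\eta$ followed by the quotient $\Pp^1 \to \Pp^1/\Ab^1$ collapsing $\{[1:y] : y \in \Ab^1\}$, and $\beta$ is the composition of \eqref{eq:theta}, $\Th(t^{-1}\id_1)$, $\id \wedge p_+$, and the excision isomorphism $\T \xrightarrow{\sim} \Pp^1/\Ab^1$ induced by $x \mapsto [x:1]$.

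The key observation is that both $\alpha$ and $\beta$ factor through the quotient map $\Ab^2 \smallsetminus \{0\} \to (\Ab^2 \smallsetminus \{0\})/(\Gm \times \Ab^1)$. For $\alpha$, this holds because $\eta^{-1}(\{[1:y] : y \in \Ab^1\}) = \Gm \times \Ab^1$, so $\alpha$ sends this subscheme to the basepoint. For $\beta$, it is built into the definition of \eqref{eq:theta}. By the excision equivalence $(\Ab^1 \times \Gm)/(\Gm \times \Gm) \xrightarrow{\sim} (\Ab^2 \smallsetminus \{0\})/(\Gm \times \Ab^1)$ recalled just above \eqref{eq:theta}, it then suffices to check that the induced maps into $\Pp^1/\Ab^1$ coincide after restriction to $\Ab^1 \times \Gm \hookrightarrow \Ab^2 \smallsetminus \{0\}$.

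On $\Ab^1 \times \Gm$ the verification becomes explicit. For $\alpha$, one has $(x,y) \mapsto [x:y] = [x/y:1]$, which under the zigzag $\T = \Ab^1/\Gm \xrightarrow{\sim} \Pp^1/\Ab^1 \xleftarrow{\sim} \Pp^1$ corresponds to the class $\overline{x/y}$ in $\T$. For $\beta$, the point $(x,y)$ maps under \eqref{eq:theta} to the class $\bar x \wedge y_+$ in $\T \wedge (\Gm)_+$; identifying $\T \wedge (\Gm)_+$ with $\Th_\Gm(1)$ by a coordinate swap, the Thom automorphism $\Th(t^{-1}\id_1)$ rescales the fiber coordinate by $t^{-1}(y) = y^{-1}$, producing $\overline{x/y} \wedge y_+$; and $\id \wedge p_+$ collapses $(\Gm)_+$, yielding $\overline{x/y} \in \T$. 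Both sides therefore produce the same map $(x,y) \mapsto \overline{x/y}$ into $\Pp^1/\Ab^1$.

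The main obstacle is purely bookkeeping: one must track the identification $\T \wedge (\Gm)_+ \cong \Th_\Gm(1) = (\Gm \times \Ab^1)/(\Gm \times \Gm)$, which involves a coordinate swap, and verify that $\Th(t^{-1}\id_1)$ acts as fiberwise multiplication by $y^{-1}$ under this identification. Once these compatibilities are in place, the pointwise agreement on $\Ab^1 \times \Gm$ is immediate.
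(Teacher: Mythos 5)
Your proposal is correct and follows essentially the same route as the paper: both restrict along the excision equivalence $(\Ab^1\times\Gm)/(\Gm\times\Gm)\xrightarrow{\sim}(\Ab^2\smallsetminus\{0\})/(\Gm\times\Ab^1)$, observe that $\eta$ on $\Ab^1\times\Gm$ is $(x,y)\mapsto[xy^{-1}:1]$, and factor the induced map on quotients as $\Th(t^{-1}\id_1)$ (fibrewise rescaling by $y^{-1}$) followed by $\id\wedge p_+$. The paper packages this as two commutative diagrams rather than a pointwise computation, but the content is identical.
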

\begin{proof}
Consider the commutative diagram in $\Sm_S$
\[ \xymatrix{
\Ab^1 \times \Gm \ar[r] \ar[d]_{\mu} & \Ab^2 \smallsetminus \{0\} \ar[d]^{\eta} \\ 
\Ab^1 \ar[r] & \Pp^1
}\]
where the upper horizontal arrow is the natural open immersion, the lower horizontal arrow is given by $x \mapsto [x:1]$, and $\mu$ is given by $(x,y)  \mapsto xy^{-1}$. Excision yields the isomorphisms in the commutative diagram in $\Hop(S)$
\[ \xymatrix{
(\Ab^1 \times \Gm)/(\Gm \times \Gm)\ar[rr]^-{\sim} \ar[d]_{\tilde{\mu}} && (\Ab^2 \smallsetminus \{0\})/(\Gm \times \Ab^1) \ar[d] & \Ab^2 \smallsetminus \{0\} \ar[d]^{\eta} \ar[l]\\ 
\Ab^1/\Gm \ar[rr]^{\sim} && \Pp^1/\Ab^1 & \Pp^1 \ar[l]_{\sim}
}\]
where $\tilde{\mu}$ is induced by $\mu$, and the lower horizontal arrows are the morphisms of \eqref{eq:T_P1}. To conclude, observe that the morphism 
 $\tilde{\mu}$ factors as the upper horizontal composite in the following commutative diagram in $\Hop(S)$:
\[ 
\begin{gathered}[b]
\xymatrix{
(\Ab^1 \times \Gm)/(\Gm \times \Gm) \ar[rrr]^{(x,y) \mapsto (xy^{-1},y)} \ar[d]^{\sim} &&& (\Ab^1 \times \Gm)/(\Gm \times \Gm) \ar[rr]^-{(x,y) \mapsto x} \ar[d]^{\sim} && \Ab^1/\Gm \ar[d]^{=}\\ 
\T \wedge (\Gm)_+ \ar[rrr]^-{\Th(t^{-1}\id_1)} &&& \T \wedge (\Gm)_+ \ar[rr]^-{\id \wedge p_+} && \T
}
\end{gathered}
\qedhere
\]
\end{proof}

\begin{proposition}
\label{prop:splitting_Gm}
Let $p \colon \Gm \to S$ be the projection, and $\T = \Ab^1/\Gm=\Th_S(1)$. Consider the composite (see \rref{p:Thom_funct})
\[
\pi \colon \T \wedge (\Gm)_+ \xrightarrow{\Th(t \id_1)} \T \wedge (\Gm)_+ \xrightarrow{\id \wedge p_+} \T.
\]
Then the following square is homotopy cocartesian in $\Spcp(S)[\eta^{-1}]$
\[ \xymatrix{
\T \wedge (\Gm)_+\ar[d]_-{\id \wedge p_+} \ar[r]^-{\pi} & \T \ar[d] \\ 
\T \ar[r] & {*}
}\]
\end{proposition}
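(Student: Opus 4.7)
To show the square is homotopy cocartesian, I would use the criterion that the map induced between the cofibers of the two horizontal arrows is an equivalence in $\Hop(S)[\eta^{-1}]$.

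First, I observe that $\Th(t \id_1)$ is a self-inverse equivalence of $\T \wedge (\Gm)_+$: by Lemma~\ref{lemm:square} applied to $t \in H^0(\Gm, \Gm)$ together with \eqref{eq:T_mult}, we have $\Th(t \id_1) \circ \Th(t \id_1) = \Th(t^2 \id_1) = \id$ in $\Hop(S)$, so $\Th(t \id_1) = \Th(t^{-1} \id_1)$. Substituting the latter identity into Lemma~\ref{lemm:eta} yields the factorization $\eta = \eqref{eq:T_P1} \circ \pi \circ \eqref{eq:theta}$.

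Since $\pi = (\id \wedge p_+) \circ \Th(t \id_1)$ factors $\id \wedge p_+$ through the self-equivalence $\Th(t \id_1)$, precomposing the cofiber sequence of Lemma~\ref{lemm:cofiber} with $\Th(t \id_1)$ produces a cofiber sequence
\[
\Ab^2 \smallsetminus \{0\} \xrightarrow{\Th(t \id_1) \circ \eqref{eq:theta}} \T \wedge (\Gm)_+ \xrightarrow{\pi} \T,
\]
so $\mathrm{cof}(\pi) \simeq \Sigma(\Ab^2 \smallsetminus \{0\})$, with Puppe connecting $\mathrm{cof}(\pi) \to \Sigma(\T \wedge (\Gm)_+)$ equal, up to sign, to $\Sigma(\Th(t \id_1) \circ \eqref{eq:theta})$.

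Applying naturality of the Puppe sequence to the map of cofiber sequences induced by the square (with first vertical $\id \wedge p_+$, noting that the Puppe connecting $\Sigma \T \to \Sigma \T$ for the bottom row $\T \to * \to \Sigma \T$ is the identity), the induced map on cofibers $\varphi \colon \mathrm{cof}(\pi) \to \Sigma \T$ must coincide with $\Sigma(\id \wedge p_+)$ composed with the Puppe connecting of $\mathrm{cof}(\pi)$. By the preceding two steps, this equals $\pm \Sigma(\pi \circ \eqref{eq:theta}) = \pm \Sigma \eta$ (after identifying $\T \simeq \Pp^1$ via \eqref{eq:T_P1}), which is invertible in $\Hop(S)[\eta^{-1}]$ since $\eta$ is. Hence $\varphi$ is an equivalence, proving the square is homotopy cocartesian. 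The main care needed is in tracking the signs and identifications through the Puppe sequence manipulations; the conceptual core is simply that the cofiber-level map is computed as a suspension of $\eta$, which has been inverted.
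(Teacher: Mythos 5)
Your core computation is exactly the paper's: you use \rref{lemm:square} over the base $\Gm$ to see that $\Th(t\id_1)$ is self-inverse, hence equals $\Th(t^{-1}\id_1)$, and you feed this into \rref{lemm:eta} to identify $\pi\circ\eqref{eq:theta}$ with $\eta$ up to the identification \eqref{eq:T_P1}. Up to that point everything is fine (modulo saying explicitly that \rref{lemm:square} is transported from $\Hop(\Gm)$ to $\Hop(S)$ via $p_\sharp$).

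The gap is in your final step. The criterion you invoke --- ``the square is homotopy cocartesian if the induced map on the cofibers of the two horizontal arrows is an equivalence'' --- is a \emph{stable} criterion. In the unstable pointed category $\Spcp(S)[\eta^{-1}]$, where the proposition is stated, only the converse implication holds: what you actually have is a map of cofiber sequences $\T \to P \to \mathrm{cof}(\pi)$ and $\T \to * \to \Sigma^{1,0}\T$ (with $P$ the homotopy pushout), which is the identity on the first terms and an equivalence on the third terms, and unstably this does not force $P \to *$ to be an equivalence (two-out-of-three fails for cofiber sequences in this configuration). Since the proposition and its consequence \rref{prop:L_circ} are used at the space level (e.g.\ via \rref{p:hyperdescent} and via homotopy colimits over $m$ in the classifying-space computations) before stabilising, the unstable statement is what must be proved. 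The repair is short and uses only what you have already established: by \rref{lemm:cofiber} the square with horizontal arrows $\eqref{eq:theta}$ and $*\to\T$ is homotopy cocartesian, and the outer square obtained by pasting it with the square of the statement has top composite $\pi\circ\eqref{eq:theta}\simeq\eta$, an isomorphism in $\Hop(S)[\eta^{-1}]$, so the outer square is also homotopy cocartesian; the cancellation property of homotopy cocartesian squares (\cite[Proposition~13.3.15]{Hirschhorn}) then yields that the right-hand square --- the one in the statement --- is homotopy cocartesian. This is precisely the paper's route.
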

\begin{proof}
From \rref{lemm:cofiber} we deduce a commutative diagram
\begin{equation}
\label{diag:cocartesian}
\begin{gathered}
\xymatrix{
\Ab^2 \smallsetminus \{0\} \ar[rr]^-{\eqref{eq:theta}} \ar[d] && \T \wedge (\Gm)_+ \ar[d]_{\id \wedge p_+} \ar[r]^-{\pi} & \T \ar[d] \\ 
{*} \ar[rr] && \T \ar[r] & {*}
}
\end{gathered}
\end{equation}
where the left inner square is homotopy cocartesian. Applying \rref{lemm:square} over the base $\Gm$ and using the functor $p_\sharp$ of \rref{p:sharp}, we have in $\Hop(S)$
\[
\Th(t \id_1) = \Th(t^{-1} \id_1) \colon \T \wedge (\Gm)_+ \to \T \wedge (\Gm)_+.
\]
It thus follows from \rref{lemm:eta} that the upper composite in the diagram \eqref{diag:cocartesian} is an isomorphism in $\Hop(S)[\eta^{-1}]$, hence the exterior square in \eqref{diag:cocartesian} is homotopy cocartesian. We conclude that the right inner square is homotopy cocartesian (by \cite[Proposition~13.3.15, Remark~7.1.10]{Hirschhorn}).
\end{proof}

\subsection{Global splitting}
\begin{definition}
\label{def:pi}
Let $L\to S$ be a line bundle. Denote by $L^\circ$ the complement of the zero-section in $L$, and by $p\colon L^{\circ} \to S$ the projection. Then the graph $L^\circ \to L^\circ \times_S L$ of the open immersion $L^\circ \to L$ may be viewed as a nowhere vanishing section of the line bundle $p^*L$ over $L^\circ$, which induces the \emph{tautological trivialisation} $\tau \colon 1 \xrightarrow{\sim} p^*L$. We define a morphism in $\Hop(S)$ (recall that $\T=\Th_S(1)$, and see \rref{p:Thom_funct})
\[
\pi_L \colon \T \wedge (L^\circ)_+ = \Th_{L^\circ}(1) \xrightarrow{\Th(\tau)} \Th_{L^\circ}(p^*L) \xrightarrow{p} \Th_S(L).
\]
\end{definition}

\begin{example}
\label{ex:pi_1}
Assume that $L=1$. Then the tautological trivialisation $\tau \colon 1 \to 1$ of the trivial line bundle over $L^\circ = \Gm$ is given by multiplication by the tautological section $t \in H^0(\Gm,\Gm)$, hence $\pi_1$ coincides with morphism $\pi$ of \rref{prop:splitting_Gm}.
\end{example}

\begin{para}
\label{p:pi_pb}
Let $L\to S$ be a line bundle. If $f \colon R \to S$ is a scheme morphism, then the functor $f^* \colon \Spcp(S) \to \Spcp(R)$ maps $\pi_L$ to $\pi_{f^*L}$.
\end{para}

\begin{para}
\label{p:pi_isom}
If $\varphi \colon L \xrightarrow{\sim} M$ is an isomorphism of line bundles over $S$, then the following diagram commutes in $\Hop(S)$
\[ \xymatrix{
\T \wedge (L^\circ)_+\ar[rr]^-{\pi_L} \ar[d]_{\id \wedge (\varphi^\circ)_+} && \Th_S(L) \ar[d]^{\Th(\varphi)} \\ 
\T \wedge (M^\circ)_+  \ar[rr]^-{\pi_M} && \Th_S(M)
}\]
\end{para}

\begin{proposition}
\label{prop:L_circ}
Let $L \to S$ be a line bundle. Then the following square is homotopy cocartesian in $\Spcp(S)[\eta^{-1}]$
\[ \xymatrix{
\T \wedge (L^\circ)_+\ar[d]_-{\id \wedge p_+} \ar[r]^-{\pi_L} & \Th_S(L) \ar[d] \\ 
\T \ar[r] & {*}
}\]
\end{proposition}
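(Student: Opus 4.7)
The plan is to reduce the statement to the already-established case of a trivial line bundle, namely \rref{prop:splitting_Gm}, by means of Zariski descent on $S$. Since $L$ is Zariski-locally trivial, I would pick an open cover $\{U_\alpha\}$ of $S$ on which $L$ trivialises, form the associated \v{C}ech nerve, and verify the cocartesian property on each iterated intersection $V = U_{\alpha_0} \cap \cdots \cap U_{\alpha_n}$, then reassemble by hyperdescent.

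First, I would apply \rref{p:hyperdescent} three times to obtain hyperdescent decompositions of the four corners of the square: for $\T = \Th_S(1)$ and for $\Th_S(L)$ using the cover $\{U_\alpha\}$ of $S$, and for $\T \wedge (L^\circ)_+ = \Th_{L^\circ}(1)$ using the induced cover $\{L|_{U_\alpha}^\circ\}$ of $L^\circ$. By \rref{p:pi_pb} the restriction of $\pi_L$ to $V$ coincides with $\pi_{L|_V}$, and the morphism $\id \wedge p_+$ is natural in the same way, so the whole square is the homotopy colimit (over the \v{C}ech nerve) of its restrictions to each $V$, pushed back to $\Spcp(S)$ via the left Quillen functors $(j_V)_\sharp$ of \rref{p:sharp}. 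Since homotopy pushouts commute with homotopy colimits, and both are preserved by the Bousfield localisation $\Spcp(S) \to \Spcp(S)[\eta^{-1}]$, it suffices to verify the cocartesian property on each $V$.

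On such a $V$ the line bundle $L|_V$ is trivial. Fixing any isomorphism $\varphi \colon L|_V \xrightarrow{\sim} 1$ and invoking \rref{p:pi_isom} together with \rref{ex:pi_1}, the square for $\pi_{L|_V}$ becomes isomorphic in $\Hop(V)$ to the square of \rref{prop:splitting_Gm} applied over the base $V$. That proposition and its supporting lemmas \rref{lemm:square}, \rref{lemm:cofiber}, \rref{lemm:eta} are formulated over an arbitrary noetherian base of finite dimension and apply verbatim with $V$ in place of $S$, so they provide the desired local cocartesianness in $\Spcp(V)[\eta^{-1}]$; this transfers to the global statement upon reassembling.

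The main obstacle is largely bookkeeping rather than conceptual depth: one must carefully align three distinct hyperdescent decompositions, two indexed by the cover of $S$ and one by the pullback cover of $L^\circ$, and verify that $\pi_L$ and $\id \wedge p_+$ respect them. Once this compatibility is in place, the reduction to the trivial case treated by \rref{prop:splitting_Gm} is immediate.
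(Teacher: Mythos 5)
Your proposal is correct and follows essentially the same route as the paper: the paper forms the homotopy pushout $F$ of the span $\Th_S(L) \leftarrow \T \wedge (L^\circ)_+ \to \T$, observes via \rref{p:hyperdescent}, \rref{p:sharp} and \rref{p:pi_pb} that $F \simeq *$ may be checked Zariski-locally on $S$, then uses \rref{p:pi_isom} and \rref{ex:pi_1} to reduce to \rref{prop:splitting_Gm}. Your version merely spells out the hyperdescent bookkeeping in more detail.
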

\begin{proof}
Let $F$ be the homotopy colimit of the diagram $\Th_S(L) \xleftarrow{\pi_L} \T \wedge (L^\circ)_+ \xrightarrow{\id \wedge p_+} \T$. By \rref{p:hyperdescent} and \rref{p:sharp} (and in view of \rref{p:pi_pb}), the fact that 
$F \simeq *$ may be verified Zariski-locally on $S$. We may thus assume that $L$ is trivial. By \rref{p:pi_isom} we may further assume that $L=1$, so that $L^\circ = \Gm$. Then, in view of \rref{ex:pi_1} the result follows from \rref{prop:splitting_Gm}
\end{proof}

The next statement was initially inspired by \cite[Proof of Theorem~4.1]{Levine-motivic_Euler}:

\begin{corollary}
\label{cor:L_circ}
Let $L \to S$ be a line bundle. Then in the notation of \rref{def:pi}, we have an isomorphism in $\SH(S)[\eta^{-1}]$
\[
(\Sup p,\Sigma^{-2,-1}\Su \pi_L) \colon \Sup L^{\circ} \xrightarrow{\sim} \Un_S \vo \Sigma^{-2,-1}\Th_S(L).
\]
\end{corollary}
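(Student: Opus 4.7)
The plan is to deduce this corollary as a formal consequence of \rref{prop:L_circ}, by transferring the cocartesian square from the pointed unstable category to the stable $\eta$-periodic one. Applying the infinite suspension functor $\Su$ to the cocartesian square of \rref{prop:L_circ}—which is legitimate because $\Su$, as a left adjoint, preserves homotopy colimits and hence cocartesian squares—produces a homotopy cocartesian square of the same shape in $\SH(S)[\eta^{-1}]$, now with the zero spectrum in the bottom-right corner.

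Next I would invoke the standard fact that in a triangulated (stable) category such as $\SH(S)[\eta^{-1}]$, a homotopy cocartesian square whose bottom-right vertex is zero is equivalent to a direct-sum decomposition: the induced map
\[
\Sigma^{2,1}\Sup L^\circ \xrightarrow{(\id_\T \wedge \Su p_+,\; \Su \pi_L)} \T \vo \Th_S(L)
\]
sits in a distinguished triangle whose third term is the pushout, namely $0$, and is therefore an isomorphism in $\SH(S)[\eta^{-1}]$. Any sign appearing in the splitting map due to the usual conventions in a triangulated category affects only one summand by the automorphism $-\id$, and so does not disturb the conclusion that we obtain an isomorphism.

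Finally, since $\Sigma^{2,1} = \T \wedge -$ is an autoequivalence of $\SH(S)$, and hence of $\SH(S)[\eta^{-1}]$, with inverse $\Sigma^{-2,-1}$, applying this inverse to the isomorphism above yields the desired
\[
\Sup L^\circ \xrightarrow{(\Sup p,\; \Sigma^{-2,-1}\Su \pi_L)} \Un_S \vo \Sigma^{-2,-1}\Th_S(L).
\]
The only substantive step is the passage from a cocartesian square with a zero corner to a direct-sum decomposition, which is a standard piece of stability formalism; the rest is bookkeeping to verify that the components of the splitting map, after desuspension by $\Sigma^{-2,-1}$, coincide with those appearing in the statement. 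I do not foresee any real obstacle beyond \rref{prop:L_circ} itself.
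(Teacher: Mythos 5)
Your proposal is correct and follows essentially the same route as the paper: both deduce the splitting from the homotopy cocartesian square of \rref{prop:L_circ} by passing to the stable category (the paper phrases the stability step as ``cocartesian implies cartesian'' while you use the associated distinguished triangle with zero third term, which is equivalent) and then desuspend by $\Sigma^{-2,-1}$.
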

\begin{proof}
The square induced in $\Spt(S)[\eta^{-1}]$ by the square of \rref{prop:L_circ}
 is homotopy cocartesian, hence also homotopy cartesian (see e.g.\ \cite[Remark~7.1.12]{Hovey-Model}). This yields an isomorphism
\[
(\Su (\id \wedge p_+),\Su \pi_L) \colon \Su (\T \wedge (L^{\circ})_+) \xrightarrow{\sim} \Su \T \vo \Th_S(L),
\]
from which the result follows by applying the functor $\Sigma^{-2,-1}$.
\end{proof}

\begin{corollary}
\label{cor:split_line}
Let $L\to S$ be a line bundle, and $V \to S$ a vector bundle.
\begin{enumerate}[(i)]
\item
\label{cor:split_line:1}
The natural map $\Th_{L^\circ}(V) \to \Th_S(V)$ extends to a natural isomorphism in $\SH(S)[\eta^{-1}]$
\[
\Th_{L^{\circ}}(V) \simeq \Th_S(V) \vo \Sigma^{-2,-1}\Th_S(V \oplus L).
\]
\item
\label{cor:split_line:2}
Denote by $p \colon L^\circ \to S$ and  $p_1,p_2 \colon L^\circ \times_S L^\circ \to L^\circ$ the projections. Then
\[
\Th_{L^\circ \times_S L^\circ}(V) \overset{p_1}{\underset{p_2}\rightrightarrows} \Th_{L^\circ}(V) \xrightarrow{p} \Th_S(V)
\]
is a split coequaliser diagram in $\SH(S)[\eta^{-1}]$.
\end{enumerate}
\end{corollary}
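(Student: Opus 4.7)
For \textbf{part (i)}, the plan is to smash the isomorphism of \rref{cor:L_circ} with $\Th_S(V)$. Applying \eqref{eq:Thom_sum} twice --- first to identify $\Th_{L^\circ}(V) = (\Sup L^\circ) \wedge \Th_S(V)$ via the structural morphism $p \colon L^\circ \to S$, and second to identify $\Th_S(L) \wedge \Th_S(V) = \Th_S(V \oplus L)$ --- one immediately obtains the desired decomposition, and the natural map $\Th_{L^\circ}(V) \to \Th_S(V)$ becomes the projection onto the first summand.

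For \textbf{part (ii)}, I would exhibit explicit sections $s$ and $t$ verifying the three axioms of a split coequaliser: $p \circ s = \id$, $p_1 \circ t = \id$, and $p_2 \circ t = s \circ p$. Take $s$ to be the inclusion of the first summand from (i), so that $p \circ s = \id$ holds tautologically. To construct $t$, apply (i) first over the base $L^\circ$ --- viewing $p_1 \colon L^\circ \times_S L^\circ \to L^\circ$ as the complement of the zero section in the line bundle $p^*L$ --- and then apply (i) again over $S$ to each of the two resulting Thom spaces. Via $p_\sharp$ of \rref{p:sharp}, this yields a four-fold splitting in $\SH(S)[\eta^{-1}]$:
\[
\Th_{L^\circ \times_S L^\circ}(V) \simeq \Th_S(V) \oplus \Sigma^{-2,-1}\Th_S(V \oplus L) \oplus \Sigma^{-2,-1}\Th_S(V \oplus L) \oplus \Sigma^{-4,-2}\Th_S(V \oplus L^{\oplus 2})
\]
in which $p_1$ becomes, by construction, the projection onto the first two summands. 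The plan is then to identify $p_2$ with the projection onto the first and third summands; once established, taking $t$ to be the inclusion of the first two summands gives $p_1 \circ t = \id$ and $p_2 \circ t = s \circ p$.

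The \textbf{main obstacle} will be verifying the form of $p_2$ in the decomposition obtained from $p_1$. I would handle this by reducing Zariski-locally on $S$ via \rref{p:hyperdescent}: when $L$ trivialises, $\Sup(L^\circ \times_S L^\circ) = \Sup\Gm \wedge \Sup\Gm$ splits transparently into four copies of $\Un_S$ after inverting $\eta$, and the projections $p_1 = \id \wedge p$ and $p_2 = p \wedge \id$ act by elementary projections on the smash summands that match exactly the iterated decomposition above. The global statement should then follow by naturality of all constructions under pullback along $S$-morphisms.
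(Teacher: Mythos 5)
Your part (i) is exactly the paper's argument: smash the splitting of \rref{cor:L_circ} with $\Th_S(V)$ (equivalently, apply $\Sigma^V$) and invoke \eqref{eq:Thom_sum}. Your part (ii) also has the right skeleton --- the section $s$ from (i), the section $t$ of $p_1$ obtained by applying (i) over the base $L^\circ$, and the three split-coequaliser identities --- and your $t$ agrees with the one in the paper. The problem is the step you yourself flag as the main obstacle.

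The gap is the verification that $p_2$ is the projection onto the first and third summands of your four-fold decomposition. You propose to check this Zariski-locally on $S$ and then ``globalise by naturality''. But this is an \emph{equality of morphisms} in $\SH(S)[\eta^{-1}]$, and equality of morphisms cannot be checked Zariski-locally: for a cover $S=U\cup U'$, the Mayer--Vietoris triangle only gives exactness of
\[
\Hom(\Sigma^{1,0}\Sup(U\cap U')_+\wedge -,A)\to \Hom(\Sup S_+\wedge -,A)\to \Hom(\Sup U_+\wedge -,A)\oplus \Hom(\Sup U'_+\wedge -,A),
\]
so two maps out of $\Th_{L^\circ\times_S L^\circ}(V)$ agreeing on the pieces of a trivialising cover may still differ by a class coming from the boundary term. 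The device \rref{p:hyperdescent} lets you test whether an \emph{object} vanishes (or a map is an \emph{isomorphism}) locally --- which is how \rref{prop:L_circ} uses it --- not whether two maps coincide; and ``naturality under pullback'' only reasserts that the restrictions agree, which is the hypothesis, not the conclusion. (Indeed, elsewhere the paper goes to some trouble to avoid exactly this pitfall: in \rref{prop:double_square} the local isomorphisms are shown to be independent of the trivialisation and are then glued using the split coequaliser of the present corollary, rather than by a naive local check.) The way to close the gap is to avoid decomposing $\Th_{L^\circ\times_S L^\circ}(V)$ altogether: write it as $(\Sup L^\circ_+)\wedge \Th_{L^\circ}(V)$ with the external factor carrying the first coordinate, so that $p_1=\id\wedge p$, $p_2=p\wedge\id$ and $t=\id\wedge s$; then the interchange law gives directly
\[
p_2\circ t=(p\wedge\id)\circ(\id\wedge s)=(\id\wedge s)\circ(p\wedge\id)=s\circ p,
\]
with no local reduction and no need to identify $p_2$ in any direct-sum decomposition. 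This is the paper's argument.
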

\begin{proof}
Statement \eqref{cor:split_line:1} follows by applying the auto-equivalence $\Sigma^V \colon \SH(S)[\eta^{-1}] \to \SH(S)[\eta^{-1}]$ to the decomposition of \rref{cor:L_circ}, in view of \eqref{eq:Thom_sum}.

Certainly in the diagram of \eqref{cor:split_line:2} we have $p \circ p_1 = p \circ p_2$. The isomorphism \eqref{cor:split_line:1} yields a section $s \colon \Th_S(V) \to \Th_{L^\circ}(V)$ of $p$ in $\SH(S)[\eta^{-1}]$. Then, in $\SH(S)[\eta^{-1}]$, the composite
\[
t \colon \Th_{L^\circ}(V) = (\Sup L^\circ) \wedge \Th_S(V) \xrightarrow{\id \wedge s} (\Sup L^\circ) \wedge \Th_{L^\circ}(V) = \Th_{L^\circ \times_S L^\circ}(V)
\]
is a section of
\[
p_1 \colon  \Th_{L^\circ \times_S L^\circ}(V) = (\Sup L^\circ) \wedge \Th_{L^\circ}(V) \xrightarrow{\id \wedge p} (\Sup L^\circ) \wedge \Th_S(V)  = \Th_{L^\circ}(V).
\]
On the other hand, in the commutative diagram in $\SH(S)[\eta^{-1}]$
\[ \xymatrix{
\Th_{L^\circ}(V)\ar@{=}[r] \ar[d]_p &  (\Sup L^\circ) \wedge \Th_S(V) \ar[r]^-{\id \wedge s}\ar[d]_{p \wedge \id}&(\Sup L^\circ) \wedge \Th_{L^\circ}(V) \ar@{=}[r] \ar[d]^{p \wedge \id} & \Th_{L^\circ \times_S L^\circ}(V)\ar[d]^{p_2} \\ 
\Th_S(V) \ar@{=}[r] & \Un_S \wedge \Th_S(V) \ar[r]^-{\id \wedge s} &  \Un_S \wedge \Th_{L^\circ}(V) \ar@{=}[r]& \Th_{L^\circ}(V)
}\]
the upper composite is $t$, while the lower one is $s$. Therefore $p_2 \circ t = s \circ p$ as endomorphisms of $\Th_{L^\circ}(V)$ in $\SH(S)[\eta^{-1}]$, proving  \eqref{cor:split_line:2}.
\end{proof}

\begin{corollary}
\label{cor:L_circ_faithful}
Let $L \to S$ be a line bundle, and denote by $p \colon L^\circ \to S$ the projection. Then the functor $p^* \colon \SH(S)[\eta^{-1}] \to \SH(L^\circ)[\eta^{-1}]$ is faithful and conservative.
\end{corollary}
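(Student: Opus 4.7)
The plan is to combine the adjunction $p_\sharp \dashv p^*$ of \rref{p:sharp} with the splitting obtained in \rref{cor:L_circ}. First I would use the fact that $p \colon L^\circ \to S$ is smooth, so that the smooth projection formula identifies the endofunctor $p_\sharp p^*$ of $\SH(S)[\eta^{-1}]$ with $(\Sup L^\circ) \wedge (-)$, and under this identification the counit $\epsilon_A \colon p_\sharp p^* A \to A$ is realized by smashing $\Sup p \colon \Sup L^\circ \to \Un_S$ with $\id_A$.

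Next, \rref{cor:L_circ} yields an explicit splitting of $\Sup p$: inverting the displayed isomorphism and restricting to the first summand produces a section $s \colon \Un_S \to \Sup L^\circ$ of $\Sup p$ in $\SH(S)[\eta^{-1}]$. Smashing with $A$ then supplies a section $s \wedge \id_A$ of the counit $\epsilon_A$, natural in $A$.

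From the existence of this natural section, both assertions follow by formal manipulations. For faithfulness, I would assume $p^* f = 0$ for some $f \colon A \to B$, apply $p_\sharp$, and use naturality of $\epsilon$ to obtain $f \circ \epsilon_A = \epsilon_B \circ p_\sharp p^* f = 0$; precomposing with $s \wedge \id_A$ then yields $f = 0$. For conservativity, since $p^*$ is triangulated it suffices to show that $p^* A \simeq 0$ implies $A \simeq 0$: but then $p_\sharp p^* A \simeq 0$, and $A$ is a retract of this zero object via $s \wedge \id_A$ and $\epsilon_A$, hence $A \simeq 0$; passing to cones covers the case of a general morphism whose image under $p^*$ is an isomorphism.

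The only step requiring real care is the identification of the counit $\epsilon_A$ with $(\Sup p) \wedge \id_A$ via the smooth projection formula. Once this is in hand, the argument is a short diagram chase, and indeed no feature of $L^\circ$ is used beyond the existence of a section $\Un_S \to \Sup L^\circ$ of $\Sup p$ in $\SH(S)[\eta^{-1}]$, which is exactly the content extracted from \rref{cor:L_circ}.
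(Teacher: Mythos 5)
Your proposal is correct and follows essentially the same route as the paper: both arguments rest on the smooth projection formula identifying $p_\sharp \circ p^*$ with $(-) \wedge \Sup L^\circ$ together with the splitting of \rref{cor:L_circ}, which exhibits the identity functor as a natural direct summand of $p_\sharp \circ p^*$ and hence yields faithfulness and reflection of zero objects (whence conservativity via triangulatedness). Your version merely makes explicit, via the counit and its section, the diagram chase that the paper leaves implicit.
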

\begin{proof}
By the smooth projection formula and \rref{cor:L_circ}, the composite $p_\sharp \circ p^* \colon \SH(S)[\eta^{-1}] \to \SH(S)[\eta^{-1}]$ decomposes as
\[
p_\sharp \circ p^* = \id \wedge (\Sup L^\circ) = \id \wedge (\Un_S \vo \Sigma^{-2,-1}\Th_S(L)) = \id \vo (\Sigma^{-2,-1} \circ \Sigma^L),
\]
which is faithful, hence so is $p^*$. The above formula also shows that $p_\sharp \circ p^*$ reflects zero-objects, hence so does $p^*$. Since $p^*$ is triangulated, it is conservative.
\end{proof}

\begin{remark}
The results of this section on line bundles will be generalised to odd rank vector bundles in \S\ref{sect:odd_rank}.
\end{remark}

\section{Applications to twisted cohomology}
\numberwithin{theorem}{subsection}
\numberwithin{lemma}{subsection}
\numberwithin{proposition}{subsection}
\numberwithin{corollary}{subsection}
\numberwithin{example}{subsection}
\numberwithin{definition}{subsection}
\numberwithin{remark}{subsection}

\subsection{Cohomology theories represented by ring spectra}
\begin{para}
\label{p:coh}
Let $A \in \Spt(S)$ be a motivic spectrum. For a pointed motivic space $\mathcal{X}$ we write
\[
A^{p,q}(\mathcal{X}) = \Hom_{\SH(S)}(\Sigma^\infty\mathcal{X},\Sigma^{p,q}A),
\]
and $\Aa(\mathcal{X}) = \bigoplus_{p,q\in\Zz} A^{p,q}(\mathcal{X})$. When $X$ is a smooth $S$-scheme, we will write $\Aa(X)$ instead of $\Aa(X_+)$. If $E \to X$ is a vector bundle of constant rank $r$, we write
\[
A^{p,q}(X;E) = A^{p+2r,q+r}(\Th_X(E)),
\]
and extend this notation to arbitrary vector bundles in an obvious way. A morphism $f \colon \mathcal{Y} \to \mathcal{X}$ of pointed motivic spaces (resp.\ of smooth $S$-schemes) induces a pullback $f^* \colon \Aa(\mathcal{X}) \to \Aa(\mathcal{Y})$.
\end{para}

\begin{para}
A commutative ring spectrum will mean a commutative monoid in $(\SH(S),\wedge,\Un_S)$. When $A \in \SH(S)$ is a commutative ring spectrum and $X \in \Sm_S$, then $\Aa(X)$ is naturally a ring, and $\Aa(X;E)$ an $\Aa(X)$-module. When $u \in H^0(X,\Gm)$, we will write $\langle u \rangle \in A^{0,0}(X)$ instead of $\langle u \rangle^*(1)$ (see \rref{p:langle_rangle}).
\end{para}

\begin{para}
If $A$ is an $\eta$-periodic motivic spectrum, for any pointed motivic space $\mathcal{X}$, we have natural isomorphisms for $p,q\in \Zz$
\[
A^{p,q}(\mathcal{X}) = \Hom_{\SH(S)[\eta^{-1}]}(\Su \mathcal{X},\Sigma^{p,q}A).
\]
\end{para}

\begin{proposition}
\label{cor:split_odd:coh}
Let $A$ be an $\eta$-periodic motivic spectrum. Let $X \in \Sm_S$. Let $L \to X$ be a line bundle, and $V \to X$ a vector bundle.
\begin{enumerate}[(i)]
\item
\label{cor:split_odd:coh:1}
Denoting by $p \colon L^\circ \to X$ the projection, we have a split short exact sequence
\[
0 \to \Aa(X;V) \xrightarrow{p^*} \Aa(L^\circ;V) \to \Aa(X;V \oplus L) \to 0.
\]

\item 
\label{cor:split_odd:coh:2}
Denoting by $p_1,p_2 \colon L^\circ \times_X L^\circ \to L^\circ$ the projections, we have an exact sequence
\[
0 \to \Aa(X;V) \xrightarrow{p^*} \Aa(L^\circ;V) \xrightarrow{p_1^*-p_2^*} \Aa(L^\circ \times_X L^\circ;V).
\]
\end{enumerate}
\end{proposition}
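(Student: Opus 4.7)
Both assertions are cohomological translations of Corollary \rref{cor:split_line}, applied with the base $S$ replaced by $X$ (the constructions of \S2 being entirely relative, this is permissible; if one prefers, one may invoke the functor $f_\sharp$ for $f \colon X \to S$). The key preliminary observation is that $\eta$-periodicity of $A$ gives a natural identification
\[
A^{p,q}(\mathcal{Y}) = \Hom_{\SH(S)}(\Su \mathcal{Y}, \Sigma^{p,q}A) = \Hom_{\SH(S)[\eta^{-1}]}(\Su \mathcal{Y}, \Sigma^{p,q}A),
\]
so that decompositions valid only after inverting $\eta$ are legitimate inputs to the computation of $\Aa$.

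For \eqref{cor:split_odd:coh:1}, I apply $\bigoplus_{p,q} \Hom_{\SH(S)[\eta^{-1}]}(-,\Sigma^{p,q}A)$ to the natural splitting
\[
\Th_{L^\circ}(V) \simeq \Th_X(V) \vo \Sigma^{-2,-1}\Th_X(V \oplus L)
\]
supplied by Corollary \rref{cor:split_line}\eqref{cor:split_line:1}. The first summand contributes $\Aa(X;V)$ directly. For the second, the shift $\Sigma^{-2,-1}$ produces a bidegree translation $(p,q) \mapsto (p+2,q+1)$ on $A$-cohomology of $\Th_X(V \oplus L)$; since $\rank(V \oplus L) = \rank V + 1$, this translation is exactly absorbed by the rank-shift convention $A^{p,q}(X;W) = A^{p+2\rank W,q+\rank W}(\Th_X(W))$, so the second summand contributes precisely $\Aa(X; V \oplus L)$. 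The projection onto the first factor is $p^*$ because, by construction in \rref{cor:split_line}\eqref{cor:split_line:1}, the splitting extends the natural map $\Th_{L^\circ}(V) \to \Th_X(V)$ induced by $p$. This yields the split short exact sequence.

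For \eqref{cor:split_odd:coh:2}, I apply the same contravariant functor to the split coequaliser diagram of Corollary \rref{cor:split_line}\eqref{cor:split_line:2}. A split coequaliser is an absolute colimit, hence is carried by any contravariant functor into an additive category to a split equaliser. This produces a split equaliser diagram
\[
\Aa(X;V) \xrightarrow{p^*} \Aa(L^\circ;V) \rightrightarrows \Aa(L^\circ \times_X L^\circ;V),
\]
which is exactly the claimed exactness at $\Aa(X;V)$ and $\Aa(L^\circ;V)$ with differential $p_1^* - p_2^*$ (injectivity of $p^*$ is already a consequence of \eqref{cor:split_odd:coh:1}). The argument is essentially formal; the only point requiring care is the bookkeeping in \eqref{cor:split_odd:coh:1} verifying that the $\Sigma^{-2,-1}$-shift in the Thom-space decomposition exactly cancels against the rank-shift convention in $\Aa(X; V \oplus L)$.
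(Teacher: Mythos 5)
Your proof is correct and follows essentially the same route as the paper, which simply applies Corollary \rref{cor:split_line} over the base $X$ (to the pullback of $A$ along $X \to S$) and reads off the cohomological consequences; your degree bookkeeping for the $\Sigma^{-2,-1}$-shift against the rank convention, and the observation that a split coequaliser is absolute and is carried by the contravariant functor $\Hom_{\SH(S)[\eta^{-1}]}(-,\Sigma^{p,q}A)$ to a split equaliser, are exactly the details the paper leaves implicit.
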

\begin{proof}
This follows by applying \rref{cor:split_line} over the base $X$ to the image of $A$ under the pullback $\Spt(S) \to \Spt(X)$.
\end{proof}

\subsection{\texorpdfstring{$\SL$}{SL}- and \texorpdfstring{$\SL^c$}{SLc}-orientations}

\begin{definition}
An \emph{$\SL$-oriented vector bundle} over a scheme $X$ is a pair $(E,\delta)$, where $E \to X$ is a vector bundle and $\delta \colon 1 \xrightarrow{\sim} \det E$ is an isomorphism of line bundles. We will also say that $\delta$ is an \emph{$\SL$-orientation} of the vector bundle $E \to X$. An isomorphism of $\SL$-oriented vector bundles $(E,\delta) \xrightarrow{\sim} (F,\epsilon)$ is an isomorphism of vector bundles $\varphi \colon E \xrightarrow{\sim} F$ such that $(\det \varphi) \circ \delta= \epsilon$.
\end{definition}

\begin{definition}[See {\cite[\S3]{PW-BO}}]
An \emph{$\SL^c$-oriented vector bundle} over a scheme $X$ is a triple $(E,L,\lambda)$, where $E \to X$ is a vector bundle and $L \to X$ a line bundle, and $\lambda \colon L^{\otimes 2} \xrightarrow{\sim} \det E$ is an isomorphism. We will also say that $(L,\lambda)$ is an \emph{$\SL^c$-orientation} of the vector bundle $E \to X$. An isomorphism of $\SL^c$-oriented vector bundles $(E,L,\lambda) \xrightarrow{\sim} (F,M,\mu)$ is an isomorphism of vector bundles $\varphi \colon E \xrightarrow{\sim} F$ and an isomorphism of line bundles $\psi \colon L \xrightarrow{\sim} M$ such that $(\det \varphi) \circ \lambda= \mu \circ \psi^{\otimes 2}$.
\end{definition}

\begin{para}
\label{p:SL_SLc}
Observe that each $\SL$-orientation $\delta$ of a vector bundle $E$ induces an $\SL^c$-orientation $(L,\lambda)$ of $E$, where $L=1$ and $\lambda$ is the composite $1^{\otimes 2} \simeq 1 \xrightarrow{\delta} \det E$.
\end{para}

\begin{para}
\label{p:SLc_SL}
Let $(E,L,\lambda)$ be an $\SL^c$-oriented vector bundle, and assume that the line bundle $L$ is trivial. Then every trivialisation $\alpha \colon 1 \xrightarrow{\sim} L$ induces an $\SL$-orientation of $E$ given by
\[
\delta_\alpha \colon 1 \simeq 1^{\otimes 2} \xrightarrow{\alpha^{\otimes 2}} L^{\otimes 2} \xrightarrow{\lambda} \det E.
\]
Observe that the $\SL^c$-oriented vector bundle induced (in the sense of \rref{p:SL_SLc}) by $\delta_\alpha$ is isomorphic to $(E,L,\lambda)$.
\end{para}

\begin{para}
\label{def:SL-orientation}
Consider a commutative ring spectrum $A \in \SH(S)$. By a $\SL$-, resp.\ $\SL^c$-, \emph{orientation} of $A$, we will mean a normalised orientation in the sense of \cite[Definition~3.3]{Ana-SL}. Such data consists in Thom classes $\thom_{(E,\delta)} \in \Aa(X;E)$ for each $\SL$-oriented vector bundle $(E,\delta)$ over $X \in \Sm_S$, resp.\ $\thom_{(E,L,\lambda)} \in \Aa(X;E)$ for each $\SL^c$-oriented vector bundle $(E,L,\lambda)$ over $X \in \Sm_S$, satisfying a series of axioms.
\end{para}

\begin{para}
\label{p:A_SLc_SL}
Let $A \in \SH(S)$ be a commutative ring spectrum. Then each $\SL^c$-orientation of $A$ induces an $\SL$-orientation of $A$, by letting the Thom class of an $\SL$-oriented vector bundle be the Thom class of the induced $\SL^c$-oriented vector bundle, in the sense of \rref{p:SL_SLc}.
\end{para}

\begin{lemma}
\label{lemm:indep_triv}
Let $A \in \SH(S)$ be an $\SL$-oriented commutative ring spectrum. Let $(E,L,\lambda)$ be an $\SL^c$-oriented vector bundle over $X \in \Sm_S$, and assume that the line bundle $L$ is trivial. Then, in the notation of \rref{p:SLc_SL}, the Thom class $\thom_{(E,\delta_\alpha)} \in \Aa(X;E)$ does not depend on the choice of the trivialisation $\alpha$ of $L$.
\end{lemma}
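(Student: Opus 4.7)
Two trivialisations $\alpha, \beta \colon 1 \xrightarrow{\sim} L$ differ by a unique unit $u \in H^0(X,\Gm)$, namely $\beta = u\alpha$, whence $\delta_\beta = \lambda \circ \beta^{\otimes 2} = u^2 \cdot \delta_\alpha$. The task thus reduces to showing that $\thom_{(E,\delta_\alpha)} = \thom_{(E, u^2\delta_\alpha)}$ in $\Aa(X;E)$.

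The plan is to stabilise by adding a trivial line-bundle summand, so as to expose the squaring principle of \rref{lemm:square}. Set $E' = E \oplus 1$, and endow it with the two $\SL$-orientations $\delta'_\alpha = \delta_\alpha \otimes \id_1$ and $\delta'_\beta = (u^2 \delta_\alpha) \otimes \id_1$, via the canonical identification $\det E' = \det E$. The automorphism $\varphi = \id_E \oplus (u^2 \id_1)$ of $E'$ has determinant $u^2 \id_{\det E}$, hence defines an isomorphism of $\SL$-oriented bundles $(E', \delta'_\alpha) \xrightarrow{\sim} (E', \delta'_\beta)$. By the functoriality of Thom classes under isomorphisms of $\SL$-oriented bundles, which is part of the axioms in \rref{def:SL-orientation}, the pullback $\Th(\varphi)^*$ sends $\thom_{(E', \delta'_\beta)}$ to $\thom_{(E', \delta'_\alpha)}$.

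Next, under the identification $\Th_X(E') = \Th_X(E) \wedge \T$ of \eqref{eq:Thom_sum} (valid since the trivial line bundle $1_X$ is the pullback of $1_S$), the map $\Th(\varphi)$ corresponds to $\id_{\Th_X(E)} \wedge \Th(u^2 \id_1)$. Applying \rref{lemm:square} over the base $X$---the transvection factorisation written there is visibly valid for any $u \in H^0(X,\Gm)$---yields $\Th(u^2\id_1) = \id$ in $\Hop(X)$, hence $\Th(\varphi) = \id$. Therefore $\thom_{(E', \delta'_\alpha)} = \thom_{(E', \delta'_\beta)}$ in $\Aa(X;E')$.

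Finally, the multiplicativity of Thom classes yields $\thom_{(E', \delta'_*)} = \thom_{(E, \delta_*)} \cdot \thom_{(1, \id_1)}$ for $\ast \in \{\alpha, \beta\}$. Since multiplication by $\thom_{(1, \id_1)}$ realises the Thom isomorphism $\Aa(X; E) \xrightarrow{\sim} \Aa(X; E')$, cancellation gives $\thom_{(E, \delta_\alpha)} = \thom_{(E, \delta_\beta)}$, as required. The principal subtlety lies in checking that $\Th(\varphi)$ genuinely decomposes as $\id \wedge \Th(u^2\id_1)$ under \eqref{eq:Thom_sum}; once this compatibility is in place, the statement follows purely formally from the squaring identity of \rref{lemm:square} together with the standard axioms of $\SL$-oriented Thom classes.
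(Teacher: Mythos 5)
Your argument is correct, but it takes a more self-contained route than the paper. The paper reduces the question in one line to the identity $\thom_{(E,u^2\delta_\alpha)}=\langle u^2\rangle\,\thom_{(E,\delta_\alpha)}$, which it quotes as \cite[Lemma~7.3]{Ana-SL}, and then concludes from $\langle u^2\rangle=1$ (Lemma \ref{lemm:square}). You instead re-derive the special case of that identity you need directly from the orientation axioms: stabilise to $E\oplus 1$, realise the rescaling of the determinant by $u^2$ as the automorphism $\id_E\oplus u^2\id_1$, kill its effect on Thom spaces via Lemma \ref{lemm:square}, and cancel the extra factor by multiplicativity and normalisation. What your approach buys is independence from the external reference (everything is reduced to the axioms of \cite[Definition~3.3]{Ana-SL} plus Lemma \ref{lemm:square}); what it costs is the extra stabilisation step and the compatibility check you flag yourself. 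On that last point, be slightly careful with the base: since $u\in H^0(X,\Gm)$ genuinely varies over $X$, the map $\Th(\id_E\oplus u^2\id_1)$ does \emph{not} decompose as $\id\wedge\Th(u^2\id_1)$ for the over-$S$ identification $\Th_X(E\oplus f^*1_S)=\Th_X(E)\wedge\Th_S(1)$ of \eqref{eq:Thom_sum}, because $u^2\id_{1_X}$ is not pulled back from $S$. The correct formulation is to take the smash decomposition in $\Spcp(X)$, where $\Th(u^2\id_1)$ is an endomorphism of $\Th_X(1)$, apply Lemma \ref{lemm:square} over the base $X$ (as you indicate), and then push forward with $f_\sharp$; this is exactly the manoeuvre the paper performs in the proof of Proposition \ref{prop:splitting_Gm}. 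With that phrasing fixed, the proof is complete.
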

\begin{proof}
If $\alpha \colon  1 \xrightarrow{\sim} L$ is a trivialisation, then every trivialisation is of the form $u \alpha$ for some $u \in H^0(X,\Gm)$. In the notation of \rref{p:SLc_SL} we then have $\delta_{u \alpha} = u^2  \delta_{\alpha}$. By \cite[Lemma~7.3]{Ana-SL}, we have
\[
\thom_{(E,\delta_{u \alpha})} = \thom_{(E,u^2 \delta_{\alpha})} = \langle u^2 \rangle \thom_{(E,\delta_\alpha)} \in \Aa(X;E).
\]
Since $\langle u^2 \rangle=1 \in A^{0,0}(X)$ by \rref{lemm:square} (or \cite[Lemma~6.2]{Ana-SL}), the statement follows. 
\end{proof}

\begin{proposition}
\label{prop:SL_SLc}
Let $A \in \SH(S)$ be an $\eta$-periodic commutative ring spectrum. Then every $\SL$-orientation of $A$ is induced (in the sense of \rref{p:A_SLc_SL}) by a unique $\SL^c$-orientation.
\end{proposition}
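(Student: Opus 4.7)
The plan is to construct the $\SL^c$-orientation by descent along the projection $p \colon L^\circ \to X$, exploiting the fact that $p^*L$ is canonically trivialized over $L^\circ$.

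Given an $\SL^c$-oriented vector bundle $(E,L,\lambda)$ over $X \in \Sm_S$, the tautological trivialization $\tau \colon 1 \xrightarrow{\sim} p^*L$ of \rref{def:pi} determines an $\SL$-orientation $\delta_\tau \colon 1 \xrightarrow{\sim} \det p^*E$ of $p^*E$ (in the sense of \rref{p:SLc_SL}), and hence a class $\thom_{(p^*E,\delta_\tau)} \in \Aa(L^\circ;E)$ via the given $\SL$-orientation of $A$. I would like to define $\thom_{(E,L,\lambda)} \in \Aa(X;E)$ as the unique class pulling back to $\thom_{(p^*E,\delta_\tau)}$; by \rref{cor:split_odd:coh}~\eqref{cor:split_odd:coh:2}, this amounts to checking that the two pullbacks to $L^\circ \times_X L^\circ$ agree. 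Writing $q_1,q_2 \colon L^\circ \times_X L^\circ \to L^\circ$ for the projections, the trivializations $q_1^*\tau$ and $q_2^*\tau$ of the pullback of $L$ differ by a unit $u \in H^0(L^\circ \times_X L^\circ,\Gm)$, so the associated $\SL$-orientations differ by $u^2$; the computation in the proof of \rref{lemm:indep_triv} (using $\langle u^2 \rangle = 1$ from \rref{lemm:square}) then gives $q_1^*\thom_{(p^*E,\delta_\tau)}=q_2^*\thom_{(p^*E,\delta_\tau)}$. Thus $\thom_{(E,L,\lambda)}$ is well-defined.

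Uniqueness of an $\SL^c$-orientation inducing the given $\SL$-orientation is immediate: for any such orientation, the formula $p^*\thom_{(E,L,\lambda)} = \thom_{(p^*E,\delta_\tau)}$ is forced (since $(p^*E,p^*L,p^*\lambda)$ is $\SL^c$-isomorphic, via $\tau$, to the $\SL^c$-oriented bundle induced by $\delta_\tau$ in the sense of \rref{p:SL_SLc}), and $p^*$ is injective by \rref{cor:split_odd:coh}~\eqref{cor:split_odd:coh:1}. The fact that the construction recovers the original $\SL$-orientation (when applied to the $\SL^c$-orientation induced by $\delta$) follows from the trivial case $L=1$: here $L^\circ = \Gm$, $\tau$ is multiplication by $t \in H^0(\Gm,\Gm)$, so $\delta_\tau = t^2 \cdot p^*\delta$, and $\thom_{(p^*E,\delta_\tau)} = \langle t^2 \rangle p^*\thom_{(E,\delta)} = p^*\thom_{(E,\delta)}$, forcing $\thom_{(E,1,\lambda)} = \thom_{(E,\delta)}$.

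It then remains to verify the axioms of \cite[Definition~3.3]{Ana-SL} for $\thom_{(E,L,\lambda)}$: normalization on the zero bundle, naturality under pullback, compatibility with isomorphisms of $\SL^c$-oriented bundles, and multiplicativity $\thom_{(E\oplus E',L\otimes L',\lambda\otimes\lambda')} = \thom_{(E,L,\lambda)}\cdot \thom_{(E',L',\lambda')}$. In each case, the strategy is the same: pull back along an appropriate projection from a torsor (e.g.\ $p \colon L^\circ \to X$ or $L^\circ \times_X L'^\circ \to X$), where the line bundles become canonically trivialized, reducing the identity to check to the corresponding axiom of the given $\SL$-orientation; faithful injectivity of these pullbacks (via \rref{cor:split_odd:coh}~\eqref{cor:split_odd:coh:1} and \rref{cor:L_circ_faithful}) then yields the axiom on $X$. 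The main obstacle is keeping careful track of the compatibility between the tautological trivializations after pullback — in particular, for multiplicativity one must identify the $\SL$-orientation of $E \oplus E'$ associated to $\tau \otimes \tau'$ on $L^\circ \times_X L'^\circ$ with the tensor product of the individual $\SL$-orientations — but once the correct cocycle-type identity is written down, each verification reduces to the analogous axiom of the $\SL$-orientation and to the vanishing $\langle u^2\rangle = 1$ already exploited.
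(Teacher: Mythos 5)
Your proposal is correct and follows essentially the same route as the paper: descend the class $\thom_{(p^*E,\delta_\tau)}$ along $p\colon L^\circ\to X$ using the exact sequence of \rref{cor:split_odd:coh}, with $\langle u^2\rangle=1$ (via \rref{lemm:indep_triv}) supplying the agreement of the two pullbacks to $L^\circ\times_X L^\circ$, and injectivity of $p^*$ giving uniqueness, compatibility with the original $\SL$-orientation, and the verification of the axioms. Your extra care about the multiplicativity axiom (pulling back to $L^\circ\times_X L'^\circ$) is a point the paper passes over more quickly, but it is the same argument.
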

\begin{proof}
We assume given an $\SL$-orientation of $A$. Let $(E,L,\lambda)$ be an $\SL^c$-oriented vector bundle. Denoting by $p\colon L^\circ \to X$ the projection, the line bundle $p^*L$ over $L^\circ$ admits a tautological trivialisation $\tau \colon 1 \xrightarrow{\sim} p^*L$. In view of \rref{p:SLc_SL}, this yields an $\SL$-orientation $\delta_\tau$ of $p^*E$.

First assume given an $\SL^c$-orientation of $A$ compatible with its $\SL$-orientation. As observed in \rref{p:SLc_SL}, the $\SL^c$-oriented vector bundle $(p^*E,p^*L,p^*\lambda)$ is isomorphic to the one induced by the $\SL$-oriented vector bundle $(p^*E,\delta_\tau)$. Thus we must have
\[
p^* \thom_{(E,L,\lambda)} = \thom_{(p^*E,p^*L,p^*\lambda)} = \thom_{(p^*E,\delta_\tau)} \in \Aa(L^{\circ};p^*E).
\]
Since $p^* \colon \Aa(X;E) \to \Aa(L^{\circ};p^*E)$ is injective by \rref{cor:split_odd:coh}, we obtain the uniqueness part of the statement.

We now construct an $\SL^c$-orientation of $A$ from its $\SL$-orientation. In the situation considered at the beginning of the proof, let $p_1,p_2 \colon L^\circ \times_X L^\circ \to L^\circ$ be the projections, and set $q=p \circ p_1=p \circ p_2$. The tautological trivialisation $\tau$ of $p^*L$ yields two trivialisations $p_1^*\tau$ and $p_2^*\tau$ of $q^*L$, and thus two $\SL$-orientations $\alpha_1=\delta_{p_1^*\tau}$ and $\alpha_2=\delta_{p_2^*\tau}$ of $E$. However it follows from \rref{lemm:indep_triv} that their Thom classes coincide, so that (observe that $\alpha_i = p_i^*(\delta_\tau)$ for $i=1,2$)
\[
p_1^*\thom_{(p^*E,\delta_\tau)} = \thom_{(E,\alpha_1)} = \thom_{(E,\alpha_2)} = p_2^*\thom_{(p^*E,\delta_\tau)} \in \Aa(L^{\circ} \times_X L^{\circ};q^*E).
\]
Therefore it follows from \dref{cor:split_odd:coh}{cor:split_odd:coh:2} that the element $\thom_{(p^*E,\delta_\tau)} \in \Aa(L^\circ;E)$ is the image of a unique element $\theta_{(E,L,\lambda)} \in \Aa(X;E)$.

From the fact that $(E,\delta) \mapsto \thom_{(E,\delta)}$ defines an $\SL$-orientation of $A$, we deduce at once that $(E,L,\lambda) \mapsto \theta_{(E,L,\lambda)}$ defines an $\SL^c$-orientation of $A$: indeed, each axiom of \cite[Definition~3.3]{Ana-SL} can be verified after pulling back along $p\colon L^\circ \to X$, since $p^* \colon \Aa(X;E) \to \Aa(L^\circ;p^*E)$ is injective by \rref{cor:split_odd:coh}.

To conclude the proof, it remains to show the $\SL^c$-orientation $(E,L,\lambda) \mapsto \theta_{(E,L,\lambda)}$ induces the original $\SL$-orientation of $A$. So let us assume that the $\SL^c$-oriented vector bundle $(E,L,\lambda)$ is induced by an $\SL$-oriented vector bundle $(E,\delta)$, in the sense of \rref{p:SL_SLc}. In particular $L=1$. Then the tautological trivialisation $\tau \colon 1 \xrightarrow{\sim} p^*L$ and the trivialisation $1 =p^*1 =p^*L$ yield two $\SL$-orientations of $p^*E$. Their Thom classes in $\Aa(L^\circ;p^*E)$ coincide by \rref{lemm:indep_triv}, and they are respectively $p^*\theta_{(E,L,\lambda)}$ and $p^*\thom_{(E,\delta)}$. Since $p^* \colon \Aa(X;E) \to \Aa(L^\circ;p^*E)$ is injective by \rref{cor:split_odd:coh}, we have $\theta_{(E,L,\lambda)} = \thom_{(E,\delta)} \in \Aa(X;E)$, as required.
\end{proof}

\begin{remark}
Ananyevskiy constructed ``Thom isomorphisms'' associated with $\SL^c$-bundles in \cite[\S4]{Ana-SL} when $A$ is an arbitrary $\SL$-oriented theory, but as explained in \cite[Remark~4.4]{Ana-SL} it is not clear whether this yields an $\SL^c$-orientation, the problem being the multiplicativity axiom. When $A$ is $\eta$-periodic, our construction leads to the same Thom isomorphisms for $\SL^c$-bundles (in fact the proof of \rref{prop:SL_SLc} shows that there is at most one way to construct such functorial isomorphisms compatibly with the $\SL$-orientation). Thus the Thom isomorphisms constructed by Ananyevskiy do give rise to an $\SL^c$-orientation when $A$ is $\eta$-periodic.
\end{remark}

\subsection{Twisting by doubles and squares of line bundles}
\label{sect:tw_doubles_squares}

\begin{proposition}
\label{prop:double_square}
Let $L$ be a line bundle over $S$. 
\begin{enumerate}[(i)]
\item \label{prop:double_square:plus}
There exist an isomorphism $\Sigma^{4,2}\Un_S \simeq \Th_S(L^{\oplus 2})$ in $\SH(S)[\eta^{-1}]$.

\item \label{prop:double_square:tensor}
If $s_1,s_2 \in \Zz$ are of the same parity, there exist an isomorphism
\[
\Th_S(L^{\otimes s_1}) \simeq \Th_S(L^{\otimes s_2}) \; \text{in $\SH(S)[\eta^{-1}]$}.
\]
\end{enumerate}
\end{proposition}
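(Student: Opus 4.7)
The plan is to perform descent along $p\colon L^\circ \to S$, using the split coequaliser of \dref{cor:split_line}{cor:split_line:2}. In each case, I construct an isomorphism of Thom spaces over $L^\circ$ from the tautological trivialisation $\tau\colon 1 \xrightarrow{\sim} p^*L$, then verify that its two pullbacks to $L^\circ \times_S L^\circ$ agree after applying $\Th$. Since a split coequaliser is absolute, this comparison extends to an isomorphism of the coequalisers in $\SH(S)[\eta^{-1}]$, i.e.\ of the relevant Thom spaces over $S$.

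For part (i), set $\alpha = \Th(\tau \vo \tau)\colon \Th_{L^\circ}(1^{\vo 2}) \xrightarrow{\sim} \Th_{L^\circ}(L^{\vo 2})$, and let $u \in H^0(L^\circ \times_S L^\circ, \Gm)$ be defined by $p_2^*\tau = u \cdot p_1^*\tau$. Then $(p_2^*\alpha) \circ (p_1^*\alpha)^{-1}$ is the endomorphism $\Th(u\id \vo u\id)$ of $\Th_{L^\circ \times_S L^\circ}(L^{\vo 2})$. After trivialising $p^*L$ over $L^\circ \times_S L^\circ$ via $p_1^*\tau$, this becomes $\Th(u\id_1 \vo u\id_1)$ on $\Th_{L^\circ \times_S L^\circ}(1^{\vo 2})$, which by multiplicativity of Thom spaces (cf.\ \eqref{eq:Thom_sum}) and \rref{p:langle_rangle} equals $\Sigma^{4,2}(\langle u \rangle \circ \langle u \rangle) = \Sigma^{4,2}\langle u^2 \rangle$. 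This is the identity by \rref{lemm:square} applied over $L^\circ \times_S L^\circ$. Hence $\alpha$ descends to an isomorphism $\Sigma^{4,2}\Un_S = \Th_S(1^{\vo 2}) \simeq \Th_S(L^{\vo 2})$ in $\SH(S)[\eta^{-1}]$.

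For part (ii), by symmetry we may assume $s_1 \geq s_2$ and write $s_1 - s_2 = 2k$. Tensoring the isomorphism $\tau^{\otimes 2k}\colon 1 \xrightarrow{\sim} p^*L^{\otimes 2k}$ with $\id_{p^*L^{\otimes s_2}}$ yields an isomorphism of line bundles $\phi\colon p^*L^{\otimes s_2} \xrightarrow{\sim} p^*L^{\otimes s_1}$ over $L^\circ$. The analogous comparison over $L^\circ \times_S L^\circ$ is multiplication by $u^{2k} = (u^k)^2$ on a line bundle, which under $\Th$ is the identity by \rref{lemm:square} applied to $u^k$ (after a trivialisation of the ambient bundle). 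Descending as in part (i) gives $\Th_S(L^{\otimes s_2}) \simeq \Th_S(L^{\otimes s_1})$ in $\SH(S)[\eta^{-1}]$. The main technical point is descending an \emph{isomorphism} rather than a mere morphism: one runs the same argument on $\alpha^{-1}$ (resp.\ $\phi^{-1}$) and uses uniqueness of the factorisation through the split coequaliser, or equivalently invokes the conservativity of $p^*$ from \rref{cor:L_circ_faithful}. Note that \rref{lemm:square} holds already in $\SH(S)$, so $\eta$-periodicity enters only to secure the availability of the split coequaliser.
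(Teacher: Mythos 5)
Your proof is correct and follows essentially the same route as the paper: reduce to the trivial case via the tautological trivialisation over $L^\circ$, check that the two pullbacks to $L^\circ\times_S L^\circ$ agree by reducing to \rref{lemm:square}, and descend (together with the inverse map) along the split coequaliser of \dref{cor:split_line}{cor:split_line:2}. The only cosmetic difference is in the middle step: where you compute $(p_2^*\alpha)\circ(p_1^*\alpha)^{-1}=\Th(u\id\oplus u\id)$ and invoke the smash decomposition $\Th(u\id_1)\wedge\Th(u\id_1)=\Sigma^{4,2}\langle u^2\rangle$, the paper phrases the same fact as independence of the choice of trivialisation and handles the $\oplus$ case via a transvection decomposition $\Th((u\id_1)^{\oplus 2})=\Th((u^2\id_1)\oplus\id_1)$.
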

\begin{proof}
Let us first assume that the line bundle $L \to S$ admits a trivialisation $\alpha \colon 1 \xrightarrow{\sim} L$. Then we have an isomorphism in $\SH(S)$ (see \rref{p:Thom_funct})
\begin{equation}
\label{eq:Th_alpha_oplus_2}
\Th(\alpha^{\oplus 2}) \colon \Th_S(1^{\oplus 2})  \xrightarrow{\sim} \Th_S(L^{\oplus 2}).
\end{equation}
Every trivialisation of $L$ is of the form $u \alpha$ with $u \in H^0(S,\Gm)$. As automorphisms of $\Th_S(1^{\oplus 2})$ in $\SH(S)$ we have,
\[
\Th((u\id_1)^{\oplus 2}) = \Th((u^2\id_1) \oplus \id_1) \circ \Th((u^{-1}\id_1) \oplus (u\id_1)) = \Th((u^2\id_1) \oplus \id_1)
\]
because $\Th((u^{-1}\id_1) \oplus (u\id_1))$ is the identity of $\Th_S(1^{\oplus 2})$, being given by a product transvections (see e.g.\ \cite[Lemma~1]{Ana-Pushforwards}). Now by \rref{lemm:square}, under the identification $\Th_S(1^{\oplus 2})=\Th_S(1) \wedge \Th_S(1)$ (see \eqref{eq:Thom_sum}), we have
\[
\Th((u^2\id_1) \oplus \id_1) = \Th(u^2\id_1) \wedge \id_{\Th_S(1)} = \id_{\Th_S(1)} \wedge \id_{\Th_S(1)} = \id_{\Th_S(1^{\oplus 2})}.
\]
Therefore $\Th((u\id_1)^{\oplus 2})$ is the identity of $\Th_S(1^{\oplus 2})$ in $\SH(S)$, hence
\[
\Th((u\alpha)^{\oplus 2}) = \Th(\alpha^{\oplus 2}) \circ \Th((u\id_1)^{\oplus 2}) = \Th(\alpha^{\oplus 2}),
\]
so that the isomorphism $\Th(\alpha^{\oplus 2})$ in $\SH(S)$ considered in \eqref{eq:Th_alpha_oplus_2} is independent of the choice of the trivialisation $\alpha$. 

Next let us consider the case \eqref{prop:double_square:tensor}. If $\alpha \colon 1 \xrightarrow{\sim} L$ is a trivialisation, we have an isomorphism in $\SH(S)$
\begin{equation}
\label{eq:Th_alpha_otimes}
\Th(\id_{L^{\otimes s_1}} \otimes \alpha^{\otimes s_2-s_1}) \colon \Th_S(L^{\otimes s_1})  \xrightarrow{\sim} \Th_S(L^{\otimes s_2}).
\end{equation}
(Here and below, for $r \in \Nn$, the notation $\alpha^{\otimes -r}$ refers to the morphism $((\alpha^\vee)^{-1})^{\otimes r}$.) Now for $u \in H^0(S,\Gm)$, the composite in $\SH(S)$
\[
\Th_S(1) \xrightarrow{\Th(\alpha^{\otimes s_1})} \Th_S(L^{\otimes s_1}) \xrightarrow{\Th(\id_{L^{\otimes s_1}} \otimes (u\alpha)^{\otimes s_2-s_1})} \Th_S(L^{\otimes s_2}) \xrightarrow{\Th(\alpha^{\otimes s_2})^{-1}} \Th_S(1)
\]
coincides with $\Th(u^{s_2-s_1} \id_1)$, which is the identity by \rref{lemm:square} (recall that $s_2-s_1$ is even), and in particular does not depend on $u\in H^0(S,\Gm)$. Since the left and right arrows in the above composite are isomorphisms, we deduce that the middle arrow does not depend on $u\in H^0(S,\Gm)$, which shows as above that the isomorphism \eqref{eq:Th_alpha_otimes} is independent of the choice of the trivialisation $\alpha$.

Let us come back to the general case, where $L \to S$ is a possibly nontrivial line bundle. Let $p\colon L^\circ \to S$ be the projection, and consider the  tautological trivialisation $\tau$ of the line bundle $p^*L$ over $L^\circ$. Let us consider the isomorphism $\varphi \colon \Th_{L^\circ}(B) \xrightarrow{\sim} \Th_{L^\circ}(C)$ in $\SH(S)[\eta^{-1}]$, where 
\begin{itemize}
\item $B = 1^{\oplus 2}, C=L^{\oplus 2}, \varphi = \Th(\tau^{\oplus 2})$ in case \eqref{prop:double_square:plus}.
\item $B = L^{\otimes s_1}, C=L^{\otimes s_2}, \varphi = \Th(\id_{L^{\otimes s_1}} \otimes \alpha^{\otimes s_2-s_1})$ in case \eqref{prop:double_square:tensor},
\end{itemize}
Let $p_1,p_2 \colon L^\circ \times_S L^\circ \to L^\circ$ be the projections, and set $q=p \circ p_1=p \circ p_2$. For $i\in \{1,2\}$, the isomorphism $p_i^*\varphi \colon \Th_{L^\circ \times_S L^\circ}(B) \to \Th_{L^\circ \times_S L^\circ}(C)$ in $\SH(S)[\eta^{-1}]$ is induced by the trivialisation $p_i^*\tau$ of the line bundle $q^*L$ over $L^\circ \times_S L^\circ$, hence does not depend on $i$, by the special case considered at the beginning of the proof. Thus, by \dref{cor:split_line}{cor:split_line:2} there exists a unique morphism $f$ fitting into the commutative diagram in $\SH(S)[\eta^{-1}]$
\[ \xymatrix{
\Th_{L^\circ \times_S L^\circ}(B)\ar@<-0.5ex>[r]_-{p_2} \ar@<+0.5ex>[r]^-{p_1}\ar[d]_{p_1^*\varphi = p_2^*\varphi} & \Th_{L^{\circ}}(B) \ar[r] \ar[d]^{\varphi} \ar[r]^-{p} & \Th_S(B) \ar[d]^f\\ 
\Th_{L^\circ \times_S L^\circ}(C) \ar@<-0.5ex>[r]_-{p_2} \ar@<+0.5ex>[r]^-{p_1} & \Th_{L^{\circ}}(C) \ar[r]_-{p}& \Th_S(C)
}\]
as well as a unique morphism $g$ into the commutative diagram in $\SH(S)[\eta^{-1}]$
\[ \xymatrix{
\Th_{L^\circ \times_S L^\circ}(C)\ar@<-0.5ex>[r]_-{p_2} \ar@<+0.5ex>[r]^-{p_1}\ar[d]_{p_1^*\varphi^{-1} = p_2^*\varphi^{-1}} & \Th_{L^{\circ}}(C) \ar[r] \ar[d]^{\varphi^{-1}} \ar[r]^-{p} & \Th_S(C) \ar[d]^g\\ 
\Th_{L^\circ \times_S L^\circ}(B) \ar@<-0.5ex>[r]_-{p_2} \ar@<+0.5ex>[r]^-{p_1} & \Th_{L^{\circ}}(B) \ar[r]_-{p}& \Th_S(B)
}\]
As  $p \colon \Th_{L^{\circ}}(B) \to \Th_S(B)$ and $p \colon \Th_{L^{\circ}}(C) \to \Th_S(C)$ are epimorphisms in $\SH(S)[\eta^{-1}]$ (see \rref{cor:split_line}), it follows that $f$ and $g$ are mutually inverse isomorphisms in $\SH(S)[\eta^{-1}]$.
\end{proof}

\begin{remark}
Proposition~\rref{prop:double_square} will be improved in \rref{prop:twist_square}.
\end{remark}

\section{Nowhere vanishing sections of odd rank bundles}
\numberwithin{theorem}{subsection}
\numberwithin{lemma}{subsection}
\numberwithin{proposition}{subsection}
\numberwithin{corollary}{subsection}
\numberwithin{example}{subsection}
\numberwithin{definition}{subsection}
\numberwithin{remark}{subsection}

\subsection{Projective bundles}

The results of this section are slight generalisations of those of \cite[\S4]{Ana-Pushforwards}.

\begin{para}
\label{p:Pk}
Let us consider the linear embeddings $i_k \colon \Pp^k \to \Pp^{k+1}$ given by the vanishing of the last coordinate. Denote by $\iota_k \colon S \to \Pp^k$ the $S$-point given by the composite $S=\Pp^0 \xrightarrow{i_0} \Pp^1 \xrightarrow{i_1} \cdots \xrightarrow{i_{k-1}} \Pp^k$. 
\end{para}

\begin{para}
\label{p:O_D}
Assume given a collection $\Dc=(d_1,\ldots,d_r) \in \Zz^r$ for some $r\in \Nn$. We will denote by $\Oc(\Dc)$ the vector bundle $\Oc(d_1) \oplus \cdots \oplus \Oc(d_r)$ over $\Pp^k$, for each $k \in \Nn$. When $k=0$, we have a canonical isomorphism $\Oc(\Dc) \simeq 1^{\oplus r}$ over $\Pp^0=S$. This yields, for any $k\in \Nn$, a canonical map in $\Spt(S)$
\begin{equation}
\label{eq:Sigma_triv_P}
\Sigma^{2r,r} \Un_S = \Th_S(1^{\oplus r}) \simeq \Th_{\Pp^0}(\Oc(\Dc)) \xrightarrow{\iota_k} \Th_{\Pp^k}(\Oc(\Dc)).
\end{equation}
\end{para}

\begin{proposition}
\label{prop:Th_Pk}
Let $k,r \in \Nn$, and $d_1,\ldots,d_r \in \Zz$. Set $\Dc=(d_1,\ldots,d_r)$, and let $d=d_1 +\cdots +d_r$. We use the notation $\Oc(\Dc)$ described in \rref{p:O_D}.
\begin{enumerate}[(i)]
\item \label{prop:Th_Pk:1}
If $k$ and $d$ are odd, then $\Th_{\Pp^k}(\Oc(\Dc)) = 0$ in $\SH(S)[\eta^{-1}]$.

\item \label{prop:Th_Pk:2}
If $k$ and $d$ are even, then \eqref{eq:Sigma_triv_P} induces an isomorphism $\Sigma^{2r,r}\Un_S \simeq \Th_{\Pp^k}(\Oc(\Dc))$ in $\SH(S)[\eta^{-1}]$.

\item \label{prop:Th_Pk:3}
If $k$ is even and $d$ is odd, then $\Th_{\Pp^k}(\Oc(\Dc)) \simeq \Sigma^{2(k+r),k+r}\Un_S$.

\item \label{prop:Th_Pk:4}
If $k$ is odd and $d$ is even, then $\Th_{\Pp^k}(\Oc(\Dc)) \simeq \Sigma^{2(k+r),k+r}\Un_S \oplus \Sigma^{2r,r}\Un_S$.
\end{enumerate}
\end{proposition}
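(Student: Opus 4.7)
Proceed by induction on $k$. The base case $k=0$ is immediate: $\Pp^0 = S$ and $\Oc(\Dc)|_{\Pp^0} \simeq 1^{\oplus r}$, so $\Th_{\Pp^0}(\Oc(\Dc)) \simeq \Sigma^{2r,r}\Un_S$, which matches (ii) (resp.\ (iii)) when $d$ is even (resp.\ odd, in which case $k+r = r$).

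For the inductive step, apply \rref{p:purity} to the closed immersion $i_{k-1}\colon \Pp^{k-1}\hookrightarrow \Pp^k$ (whose normal bundle is $\Oc(1)$ and whose open complement is $\Ab^k$) with $V = \Oc(\Dc)$. Since every line bundle on $\Ab^k$ is trivial and $\Ab^k$ is $\Ab^1$-contractible, this produces a cofiber sequence
\begin{equation*}
\Sigma^{2r,r}\Un_S \longrightarrow \Th_{\Pp^k}(\Oc(\Dc)) \longrightarrow \Th_{\Pp^{k-1}}(\Oc(\Dc'))
\end{equation*}
with $\Dc' = (1,d_1,\ldots,d_r)$, so $|\Dc'| = d+1$. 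Since passing from $(k,d)$ to $(k-1,|\Dc'|)$ flips both parities, each of the four cases feeds into a different case of the induction hypothesis applied to $\Pp^{k-1}$. Case (ii) follows at once from IH~(i), which gives $0$ on the right. The remaining three cases amount to analyzing the connecting map $\delta\colon \Th_{\Pp^{k-1}}(\Oc(\Dc'))\to \Sigma^{2r+1,r}\Un_S$.

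\textbf{Main obstacle.} The heart of the argument is identifying the restriction of $\delta$ to the ``bottom summand'' $\Sigma^{2(r+1),r+1}\Un_S$ of $\Th_{\Pp^{k-1}}(\Oc(\Dc'))$ (arising from $\iota_{k-1}\colon S\to \Pp^{k-1}$ together with the canonical trivialisations $\Oc(1)|_{\Pp^0}\simeq 1$ and $\Oc(d_i)|_{\Pp^0}\simeq 1$) with a multiple of the Hopf map $\eta\colon \Sigma^{2(r+1),r+1}\Un_S \to \Sigma^{2r+1,r}\Un_S$. I would verify this by tracing the purity equivalence near $\iota_{k-1}$ and unwinding the description of $\eta$ in \rref{p:eta} via the chain \eqref{eq:T_P1}. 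Once this is available, $\eta$-invertibility makes the restriction an isomorphism in $\SH(S)[\eta^{-1}]$: in case (i), IH~(ii) identifies the right-hand term with precisely this bottom summand, so everything cancels and the middle term is $0$; in case (iii), IH~(iv) decomposes the right-hand term as this bottom summand together with an additional top summand $\Sigma^{2(k+r),k+r}\Un_S$, and only the top survives.

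Case (iv) is the most delicate: here IH~(iii) yields right-hand term $\Sigma^{2(k+r),k+r}\Un_S$, and one needs to produce a splitting of the two-term cofiber sequence. This can be handled either by iterating the purity sequence down to $\Pp^0$ (so that the already-established cases isolate the top and bottom cells) or by exhibiting a section directly, for instance combining the section coming from $\iota_k$ with a retraction onto the top cell constructed via smooth-projective duality for $\Pp^k$.
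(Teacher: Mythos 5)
Your overall strategy (induction on $k$ via the purity sequence for the linear embedding $\Pp^{k-1}\subset\Pp^k$) overlaps with part of the paper's proof: the paper uses exactly that sequence to deduce (ii) from (i) and to set up (iv). But there is a genuine gap at the point you yourself flag as the ``main obstacle''. The claim that the restriction of the connecting map $\delta$ to the bottom summand $\Sigma^{2(r+1),r+1}\Un_S$ is an invertible multiple of $\eta$ is asserted, not proven, and it is essentially equivalent to part (i) itself: if $\delta$ vanished on that summand the sequence would split and (i) would be false. Note also that ``a multiple of the Hopf map'' is not enough --- you need a \emph{unit} multiple, i.e.\ that $\delta$ becomes an isomorphism in $\SH(S)[\eta^{-1}]$, and this element of $[\Sigma^{2r+2,r+1}\Un_S,\Sigma^{2r+1,r}\Un_S]$ depends a priori on $\Dc'$ and on the chosen trivialisations. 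Verifying it ``by tracing the purity equivalence and unwinding \rref{p:eta} via \eqref{eq:T_P1}'' is a computation of the same order as \rref{lemm:eta}, and you would need it for arbitrary $\Oc(\Dc')$ on $\Pp^{k-1}$, not just for the top cell of $\Pp^2$. The same difficulty recurs in your case (iv), where a connecting map in the same group must be shown to die after inverting $\eta$; neither of your two suggestions (iterating purity down to $\Pp^0$, or duality) makes this concrete.

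The paper's route avoids computing any connecting map. For (i) it uses linear embeddings $\Pp^{k-2}\subset\Pp^k$ of codimension $2$ (which preserve both parities) to reduce to $k=1$; on $\Pp^1$ the twisting results of \rref{prop:double_square} reduce $\Th_{\Pp^1}(\Oc(\Dc))$ with $d$ odd to a suspension of $\Th_{\Pp^1}(\Oc(-1))$, which vanishes in $\SH(S)[\eta^{-1}]$ directly from the cofiber sequence \eqref{eq:E_circ_dist}, because $\Oc(-1)^\circ=\Ab^2\smallsetminus\{0\}\to\Pp^1$ \emph{is} the map $\eta$. Parts (ii)--(iv) then follow formally from (i) and purity; in particular the splitting in (iv) comes from the retraction furnished by (ii) applied to the composite $\Th_{\Pp^{k-1}}(\Oc(\Dc))\to\Th_{\Pp^k}(\Oc(\Dc))\to\Th_{\Pp^{k+1}}(\Oc(\Dc))$ (going \emph{up} one step, not down). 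If you want to salvage your induction, the cleanest fix is to establish (i) first by such an argument and then let the remaining cases follow as you describe.
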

\begin{proof}
Let us first prove (i). Assume that $k$ and $d$ are odd. Consider a linear embedding $\Pp^1 \to \Pp^k$. Its normal bundle is $\Oc(1)^{\oplus k-1}$, and its open complement is a vector bundle over $\Pp^{k-2}$. The corresponding zero-section $\Pp^{k-2} \to \Pp^k\smallsetminus \Pp^1$ induces an isomorphism in $\SH(S)$, and is the restriction of a linear embedding $\Pp^{k-2} \to \Pp^k$. Thus \rref{p:purity} yields a distinguished triangle in $\SH(S)$
\[
\Th_{\Pp^{k-2}}(\Oc(\Dc)) \to \Th_{\Pp^k}(\Oc(\Dc)) \to \Th_{\Pp^1}(\Oc(\Dc) \oplus \Oc(1)^{\oplus k-1}) \to \Sigma^{1,0} \Th_{\Pp^{k-2}}(\Oc(\Dc)).
\]
Using induction on the odd integer $k$, we are reduced to assuming that $k=1$. Now by \dref{prop:double_square}{prop:double_square:tensor} we have in $\SH(S)[\eta^{-1}]$
\[
\Th_{\Pp^1}(\Oc(\Dc)) \simeq \Sigma^{2s,s}(\Th_{\Pp^1}(\Oc(-1))^{\wedge r-s}),
\]
where $s$ is the number of indices $i \in \{1,\dots,r\}$ such that $d_i$ is even. Since $d$ is odd, so is $r-s$, and using \dref{prop:double_square}{prop:double_square:plus} we deduce that
\[
\Th_{\Pp^1}(\Oc(\Dc)) \simeq \Sigma^{2(r-1),r-1}\Th_{\Pp^1}(\Oc(-1)) \quad \text{in $\SH(S)[\eta^{-1}]$.}
\]
But $\Th_{\Pp^1}(\Oc(-1))$ vanishes in $\SH(S)[\eta^{-1}]$, because of the distinguished triangle (see \rref{eq:E_circ_dist})
\[
\Sup \Oc(-1)^{\circ} \to \Sup \Pp^1 \to \Th_{\Pp^1}(\Oc(-1)) \to \Sigma^{1,0}\Sup \Oc(-1)^{\circ}
\]
and the definition of the map $\eta$ (recall that $\Oc(-1)^\circ = \Ab^2 \smallsetminus \{0\}$). We have proved \eqref{prop:Th_Pk:1}.

Let us come back to the situation when $k$ and $d$ are arbitrary. Consider a linear embedding $\Pp^{k-1} \to \Pp^k$ avoiding the $S$-point $\iota_k \colon S \to \Pp^k$ (we write $\Pp^{-1}=\varnothing$). It is a closed immersion defined by the vanishing of a regular section of $\Oc(1)$. Its open complement is isomorphic to $\Ab^k$, and the morphism $j_k\colon S \to \Ab^k$ induced by $\iota_k$ induces an isomorphism in $\SH(S)$. The canonical trivialisation of $\Oc(\Dc)$ over $\Pp^0=S$ is the restriction along $j_k$ of a trivialisation of $\Oc(\Dc)|_{\Ab^k}$ (induced by the trivialisation of $\Oc(1)|_{\Ab^k}$ corresponding to the regular section of $\Oc(1)$ mentioned above). It follows that the map $\Th_{\Pp^0}(\Oc(\Dc)) \to \Th_{\Ab^k}(\Oc(\Dc))$ induced by $j_k$ induces an isomorphism in $\SH(S)$. Thus \rref{p:purity} yields a distinguished triangle in $\SH(S)$
\[
\Sigma^{2r,r}\Un_S \xrightarrow{\eqref{eq:Sigma_triv_P}} \Th_{\Pp^k}(\Oc(\Dc)) \to \Th_{\Pp^{k-1}}(\Oc(\Dc) \oplus \Oc(1)) \to \Sigma^{2r+1,r} \Un_S,
\]
so that \eqref{prop:Th_Pk:2} follows from \eqref{prop:Th_Pk:1}.

Consider now a linear embedding $s\colon S=\Pp^0 \to \Pp^k$ avoiding $i_{k-1}(\Pp^{k-1})$. Its open complement $U$ is a line bundle over $\Pp^{k-1}$. The corresponding zero-section $\Pp^{k-1} \to U$ induces an isomorphism in $\SH(S)$, and is the restriction of the linear embedding $i_{k-1} \colon \Pp^{k-1} \to \Pp^k$. Since the vector bundle $s^*\Oc(\Dc)$ and the normal bundle $s^*\Oc(1)^{\oplus k}$ to $s$ are both trivial, we have by \rref{p:purity} a distinguished triangle in $\SH(S)$
\begin{equation}
\label{eq:tr_k-1_k}
\Th_{\Pp^{k-1}}(\Oc(\Dc)) \to \Th_{\Pp^k}(\Oc(\Dc)) \to \Sigma^{2(k+r),k+r}\Un_S \to \Sigma^{1,0} \Th_{\Pp^{k-1}}(\Oc(\Dc)).
\end{equation}
Therefore \eqref{prop:Th_Pk:3} follows from \eqref{prop:Th_Pk:1}. 

Finally, assume that $k$ is odd and $d$ is even. It follows from \eqref{prop:Th_Pk:2} that the composite $\Th_{\Pp^{k-1}}(\Oc(\Dc)) \to \Th_{\Pp^k}(\Oc(\Dc)) \to \Th_{\Pp^{k+1}}(\Oc(\Dc))$ is an isomorphism in $\SH(S)[\eta^{-1}]$, hence $\Th_{\Pp^{k-1}}(\Oc(\Dc)) \to \Th_{\Pp^k}(\Oc(\Dc))$ admits a retraction, giving a splitting of the triangle \eqref{eq:tr_k-1_k}. In view of \eqref{prop:Th_Pk:2}, this proves \eqref{prop:Th_Pk:4}.
\end{proof}

\begin{corollary}
\label{cor:Pk_even}
If $k \in \Nn$ is even, the structural morphism $\Pp^k \to S$ induces an isomorphism $\Sup \Pp^k \xrightarrow{\sim} \Un_S$ in $\SH(S)[\eta^{-1}]$.
\end{corollary}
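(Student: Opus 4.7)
The plan is to deduce this as a direct application of \rref{prop:Th_Pk}. I would invoke part (ii) of that proposition with the empty collection, i.e.\ $r=0$, so that $\Dc$ has no entries and $d=0$ is even (while $k$ is even by hypothesis). With this choice, $\Oc(\Dc)$ is the rank-zero bundle over $\Pp^k$, so $\Th_{\Pp^k}(\Oc(\Dc))$ is identified with $\Sup \Pp^k$, the map \eqref{eq:Sigma_triv_P} reduces to $\Sup \iota_k \colon \Un_S \to \Sup \Pp^k$, and the proposition guarantees that this is an isomorphism in $\SH(S)[\eta^{-1}]$.

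It then remains to upgrade this to an isomorphism induced by the structural morphism $f \colon \Pp^k \to S$ rather than by its section $\iota_k$. But $\iota_k$ is by construction an $S$-point of $\Pp^k$, so $f \circ \iota_k = \id_S$. Applying $\Sup$, the morphism $\Sup f$ is a retraction of the isomorphism $\Sup \iota_k$, which forces $\Sup f = (\Sup \iota_k)^{-1}$ in $\SH(S)[\eta^{-1}]$. This is the desired isomorphism.

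No serious obstacle arises: the substantive content has already been established in the course of proving \rref{prop:Th_Pk}. Should one prefer to avoid the degenerate $r=0$ case, the same reasoning goes through with $r=1$ and $\Dc = (0)$, since then $\Oc(\Dc)$ is the trivial line bundle on $\Pp^k$, and \rref{prop:Th_Pk}(ii) supplies an isomorphism $\Sigma^{2,1}\Un_S \simeq \Sigma^{2,1}\Sup \Pp^k$ induced by $\Sigma^{2,1}\Sup \iota_k$; the desired isomorphism is then obtained by cancelling the autoequivalence $\Sigma^{2,1}$ and applying the retraction argument above.
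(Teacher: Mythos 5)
Your proposal is correct and is essentially identical to the paper's own proof: the paper also applies \rref{prop:Th_Pk}\,(ii) with $r=0$ and observes that the structural morphism is a retraction of $\iota_k$, hence inverse to the isomorphism $\Sup \iota_k$. The alternative via $\Dc=(0)$ and cancelling $\Sigma^{2,1}$ is a fine, if unnecessary, safeguard.
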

\begin{proof}
The structural morphism is retraction of $\iota_k$, so the corollary follows from \dref{prop:Th_Pk}{prop:Th_Pk:2} applied with $r=0$.
\end{proof}

\begin{proposition}
\label{lemm:Th_odd}
Let $E,V_1,\ldots,V_n$ be vector bundles of constant rank over $S$, and $d_1,\ldots,d_n \in \Zz$. Assume that $\rank E$ is even and that $d_1 \rank V_1 +\cdots+d_n \rank V_n$ is odd. Then
\[
\Th_{\Pp(E)}((\Oc(d_1)\otimes q^*V_1) \oplus \cdots \oplus (\Oc(d_n)\otimes q^* V_n))=0 \in \SH(S)[\eta^{-1}],
\]
where $q \colon \Pp(E) \to S$ is the projective bundle.
\end{proposition}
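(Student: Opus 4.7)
The plan is to reduce to \dref{prop:Th_Pk}{prop:Th_Pk:1} by Zariski hyperdescent, trivialising $E$ and the $V_i$ locally on $S$ and invoking the known vanishing on $\Pp^{m-1}$ for $m = \rank E$ even.

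First I would choose an open cover $\{U_\alpha\}$ of $S$ on which $E$ and each $V_i$ are simultaneously trivial. Pulling back along $q \colon \Pp(E) \to S$ gives an open cover $\{\tilde U_\alpha \simeq U_\alpha \times \Pp^{m-1}\}$ of $\Pp(E)$. Applying \rref{p:hyperdescent} to the bundle $F = \bigoplus_i \Oc(d_i) \otimes q^* V_i$ with this cover expresses $\Th_{\Pp(E)}(F)$ as the homotopy colimit (over the \v Cech nerve) of the Thom spaces $\Th_{\tilde U_{\alpha_1} \cap \cdots \cap \tilde U_{\alpha_k}}(F|_{\tilde U_{\alpha_1} \cap \cdots \cap \tilde U_{\alpha_k}})$. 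Since the $\eta$-periodic localisation is a left adjoint, it preserves this homotopy colimit; thus it suffices to show each term vanishes in $\SH(S)[\eta^{-1}]$.

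I would then analyse a single iterated intersection $U := U_{\alpha_1} \cap \cdots \cap U_{\alpha_k}$. Both $E|_U$ and each $V_i|_U$ are trivial, so (non-canonically) $\tilde U := U \times \Pp^{m-1}$, and $F|_{\tilde U}$ becomes isomorphic to the pullback along the projection $p_2 \colon U \times \Pp^{m-1} \to \Pp^{m-1}$ of $G := \bigoplus_i \Oc_{\Pp^{m-1}}(d_i)^{\oplus r_i}$, where $r_i = \rank V_i$. The external product formula (a consequence of \eqref{eq:Thom_sum} via the $f_\sharp$-adjunction of \rref{p:sharp}) identifies $\Th_{\tilde U}(p_2^* G)$ with $U_+ \wedge \Th_{\Pp^{m-1}}(G)$. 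In the notation of \rref{p:O_D}, $G = \Oc(\Dc')$ where $\Dc'$ repeats each $d_i$ precisely $r_i$ times, hence $\sum \Dc' = \sum_i d_i r_i$ is odd by hypothesis; as $m-1$ is also odd, \dref{prop:Th_Pk}{prop:Th_Pk:1} yields $\Th_{\Pp^{m-1}}(G) = 0$ in $\SH(S)[\eta^{-1}]$. Hence each term of the \v Cech diagram is zero, and a homotopy colimit of zero objects in a triangulated category vanishes.

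The main subtlety is that the trivialisations of $E$ and the $V_i$ on different opens $U_\alpha$ need not agree on overlaps, so the identification of $F|_{\tilde U}$ with $p_2^* G$ depends on choices and is not functorial in the \v Cech diagram. This would be a real obstacle if one tried to assemble these into a single isomorphism of Thom spaces, but since the argument only needs each individual term to be zero --- a property of the isomorphism class of the bundle alone, thanks to \rref{p:Thom_funct} --- the imprecision is harmless and the conclusion follows.
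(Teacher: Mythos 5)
Your proof is correct and follows exactly the paper's route: reduce to the Zariski-local case via \rref{p:hyperdescent} and \rref{p:sharp}, trivialise $E$ and the $V_i$, and invoke \dref{prop:Th_Pk}{prop:Th_Pk:1} with $k=\rank E-1$ odd and $d=\sum_i d_i\rank V_i$ odd. Your closing remark about the non-functoriality of the local trivialisations, and why it is harmless since only objectwise vanishing of the \v{C}ech diagram is needed, is a correct and worthwhile clarification of what the paper leaves implicit.
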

\begin{proof}
By \rref{p:hyperdescent} and \rref{p:sharp}, this may be verified Zariski-locally on $S$, so we may assume that $E,V_1,\ldots,V_n$ are all trivial. Then the statement follows from \dref{prop:Th_Pk}{prop:Th_Pk:1}.
\end{proof}

\begin{proposition}
\label{prop:PE_odd}
\label{cor:PE_odd}
Let $E,V$ be vector bundles over $S$. Assume that $E$ has constant odd rank. Then $\Th_{\Pp(E)}(V) \to \Th_S(V)$ is an isomorphism in $\SH(S)[\eta^{-1}]$. In particular $\Sup \Pp(E) \xrightarrow{\sim} \Un_S$ in $\SH(S)[\eta^{-1}]$.
\end{proposition}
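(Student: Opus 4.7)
The plan is a two-step reduction: first, reduce the first assertion to the ``in particular'' clause, and then reduce the latter Zariski-locally on $S$ to the case of a trivial odd-rank bundle, where \rref{cor:Pk_even} finishes the argument.

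For the first reduction, let $q \colon \Pp(E) \to S$ denote the projection. Writing $q^*V = 0_{\Pp(E)} \oplus q^*V$ and applying \eqref{eq:Thom_sum} yields a canonical identification
\[
\Th_{\Pp(E)}(q^*V) \simeq \Sup \Pp(E) \wedge \Th_S(V),
\]
under which the natural map $\Th_{\Pp(E)}(q^*V) \to \Th_S(V)$ corresponds to $q_+ \wedge \id_{\Th_S(V)}$. Since the endofunctor $- \wedge \Th_S(V) = \Sigma^V$ is an auto-equivalence of $\SH(S)$, hence of $\SH(S)[\eta^{-1}]$, the first assertion is equivalent to proving that $q_+ \colon \Sup \Pp(E) \to \Un_S$ is an isomorphism in $\SH(S)[\eta^{-1}]$.

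For the second step, I would follow the Zariski descent pattern already employed in the proof of \rref{lemm:Th_odd}: by \rref{p:hyperdescent} together with \rref{p:sharp}, the assertion may be tested Zariski-locally on $S$. We may therefore assume $E \simeq 1^{\oplus 2k+1}$ for some $k \in \Nn$, whence $\Pp(E) \simeq \Pp^{2k}$, and the conclusion is precisely \rref{cor:Pk_even}.

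There is no serious obstacle here: once one recognises that Thom-twisting by $V$ is invertible on $\SH(S)$, everything reduces to the previously established calculation for even-dimensional projective spaces. The only mildly delicate point is ensuring that the morphism itself (and not only its source and target) can be tested locally on $S$, but this follows from the naturality of $q_+$ under open restrictions and the hyperdescent enjoyed by both $\Sup \Pp(E)$ and $\Un_S$, exactly as in the cited proof.
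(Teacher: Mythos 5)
Your argument is correct and follows essentially the same route as the paper: a Zariski-local reduction via \rref{p:hyperdescent} and \rref{p:sharp} to the trivial case, where \rref{cor:Pk_even} applies. The only (harmless) difference is that you strip off $V$ globally using the auto-equivalence $\Sigma^V$ before localising, whereas the paper trivialises $E$ and $V$ simultaneously on an open cover and then suspends \rref{cor:Pk_even}.
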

\begin{proof}
By \rref{p:hyperdescent} and \rref{p:sharp}, this may be verified Zariski-locally on $S$, so we may assume that $E$ and $V$ are both trivial. Then the statement follows after suspending \rref{cor:Pk_even}.
\end{proof}

\subsection{Odd rank vector bundles}
\label{sect:odd_rank}
\begin{para}
\label{p:Th_PE_X}
Let $E\to S$ be a vector bundle. The composite $\Oc(-1) \subset E \times_S \Pp(E) \to E$ restricts to an isomorphism $\Oc(-1)^{\circ} \xrightarrow{\sim} E^\circ$, which is $\Gm$-equivariant.  We thus obtain a commutative diagram in $\Spt(S)$
\begin{equation}
\label{diag:O(1)_E}
\begin{gathered}
\xymatrix{
\Sup \Oc(-1)^\circ\ar[r] \ar[d]^{\simeq} & \Sup \Pp(E) \ar[d]\\ 
\Sup E^\circ \ar[r] & \Un_S
}
\end{gathered}
\end{equation}
which induces a morphism of the homotopy cofibers of the horizontal morphisms:
\begin{equation}
\label{eq:Th_PE_X}
\Th_{\Pp(E)}(\Oc(-1)) \to \Th_S(E) \in \SH(S).
\end{equation}
\end{para}

\begin{lemma}
\label{lemm:Th_PE_X}
Let $E \to S$ be a vector bundle of constant odd rank. Then \eqref{eq:Th_PE_X} is an isomorphism $\Th_{\Pp(E)}(\Oc(-1)) \xrightarrow{\sim} \Th_S(E)$ in $\SH(S)[\eta^{-1}]$.
\end{lemma}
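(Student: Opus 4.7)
The plan is to identify the map \eqref{eq:Th_PE_X} as the induced map on homotopy cofibers of the two horizontal rows in the commutative square \eqref{diag:O(1)_E}, and then verify that the two vertical arrows in that square are isomorphisms in $\SH(S)[\eta^{-1}]$. A standard triangulated argument then forces the map on cofibers to be an isomorphism as well.

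First I would unpack the cofiber sequences. The top row $\Sup \Oc(-1)^\circ \to \Sup \Pp(E)$ comes from the open embedding of the complement of the zero-section of $\Oc(-1) \to \Pp(E)$, so its cofiber is by definition $\Th_{\Pp(E)}(\Oc(-1))$ (using the vector bundle weak equivalence $\Oc(-1) \to \Pp(E)$, together with \eqref{eq:E_circ_dist}). Similarly the bottom row has cofiber $\Th_S(E)$. By construction \eqref{eq:Th_PE_X} is the map between these cofibers induced by the commutative square.

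Next I would check the two vertical arrows. The left vertical is already explicitly asserted (and proved in \rref{p:Th_PE_X}) to be an isomorphism in $\Spt(S)$: the $\Gm$-equivariant isomorphism $\Oc(-1)^\circ \xrightarrow{\sim} E^\circ$ makes it a weak equivalence before $\eta$-inversion. The right vertical is the structural map $\Sup \Pp(E) \to \Un_S$, which by \rref{cor:PE_odd} is an isomorphism in $\SH(S)[\eta^{-1}]$ because $\rank E$ is odd. This is really the only nontrivial ingredient.

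Finally, since two out of three arrows in a morphism of distinguished triangles in $\SH(S)[\eta^{-1}]$ are isomorphisms, so is the third; this concludes that \eqref{eq:Th_PE_X} is an isomorphism in $\SH(S)[\eta^{-1}]$. There is no real obstacle here beyond invoking \rref{cor:PE_odd}; the lemma is essentially an immediate consequence of the projective bundle vanishing proved in the previous subsection, combined with the standard identification of $E^\circ$ with the complement of the zero-section in the tautological line bundle on $\Pp(E)$.
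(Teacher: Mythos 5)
Your proof is correct and is essentially the same as the paper's: the paper likewise observes that both vertical arrows in the square \eqref{diag:O(1)_E} are isomorphisms in $\SH(S)[\eta^{-1}]$ (the left one being a scheme isomorphism, the right one by \rref{cor:PE_odd}) and concludes for the induced map on homotopy cofibers. Your write-up just makes the identification of the cofibers and the two-out-of-three step more explicit.
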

\begin{proof}
It follows from \rref{cor:PE_odd} that both vertical arrows in the diagram \eqref{diag:O(1)_E} are isomorphisms in $\SH(S)[\eta^{-1}]$, hence so the induced map on homotopy cofibers.
\end{proof}

\begin{proposition}
\label{prop:split_odd}
Let $E \to S$ be a vector bundle of constant odd rank. Then the projection $E^\circ \to S$ admits a section in $\SH(S)[\eta^{-1}]$, inducing a decomposition in $\SH(S)[\eta^{-1}]$
\[
\Sup E^{\circ} \simeq \Un_S \vo \Sigma^{-2,-1}\Th_S(E).
\]
\end{proposition}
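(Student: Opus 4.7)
The plan is to reduce to the line-bundle case (\rref{cor:L_circ}) by realising $E^\circ$ as the complement of the zero-section of the tautological line bundle $\Oc(-1)$ over the projective bundle $q \colon \Pp(E) \to S$. Concretely, I would apply \rref{cor:L_circ} over the base $\Pp(E)$ to the line bundle $\Oc(-1)$, obtaining in $\SH(\Pp(E))[\eta^{-1}]$ an isomorphism
\[
\Sup_{\Pp(E)} \Oc(-1)^\circ \simeq \Un_{\Pp(E)} \vo \Sigma^{-2,-1}\Th_{\Pp(E)}(\Oc(-1)),
\]
whose first projection is induced by the bundle projection $\Oc(-1)^\circ \to \Pp(E)$.

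Next I would push this decomposition down to $S$ via the functor $q_\sharp$ of \rref{p:sharp}. Using the $\Gm$-equivariant identification $\Oc(-1)^\circ \xrightarrow{\sim} E^\circ$ recalled in \rref{p:Th_PE_X}, the left-hand side becomes $\Sup E^\circ$, and the first-projection map on the right becomes the suspension of the composite $E^\circ \simeq \Oc(-1)^\circ \to \Pp(E) \to S$, which is precisely the structural morphism $E^\circ \to S$.

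To conclude, I would identify each summand on the right using the tools already set up for odd-rank bundles: \rref{cor:PE_odd} provides an isomorphism $\Sup \Pp(E) \simeq \Un_S$ in $\SH(S)[\eta^{-1}]$ (with inverse induced by the structural map), and \rref{lemm:Th_PE_X} yields $\Th_{\Pp(E)}(\Oc(-1)) \simeq \Th_S(E)$ in $\SH(S)[\eta^{-1}]$. Combining these identifications produces the desired isomorphism
\[
\Sup E^\circ \simeq \Un_S \vo \Sigma^{-2,-1}\Th_S(E)
\]
in $\SH(S)[\eta^{-1}]$, whose first projection coincides with $\Sup$ of the projection $E^\circ \to S$. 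Inverting this isomorphism and restricting to the $\Un_S$-summand then yields the asserted section.

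Every input is already in hand, so I do not expect a real obstacle; the only point requiring attention is the naturality check ensuring that the first-projection map transforms, through the chain of identifications, into the suspension of $E^\circ \to S$, which is immediate from the constructions involved.
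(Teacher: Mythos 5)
Your proposal is correct and follows essentially the same route as the paper: apply \rref{cor:L_circ} to $\Oc(-1)$ over $\Pp(E)$, push down via $q_\sharp$, identify $\Oc(-1)^\circ$ with $E^\circ$ using \rref{p:Th_PE_X}, and then use \rref{cor:PE_odd} and \rref{lemm:Th_PE_X} to recognise the two summands. The naturality point you flag at the end is indeed the only thing to check, and it holds for the reasons you give.
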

\begin{proof}
By \rref{cor:L_circ}, we have a splitting in $\SH(S)[\eta^{-1}]$
\[
\Sup \Oc(-1)^\circ \simeq \Sup \Pp(E) \oplus \Sigma^{-2,-1}\Th_{\Pp(E)}(\Oc(-1))
\]
and the statement follows from \rref{p:Th_PE_X}, \rref{cor:PE_odd} and \rref{lemm:Th_PE_X}.
\end{proof}

\begin{remark}
One may deduce that \rref{cor:split_line} and \rref{cor:L_circ_faithful} remain valid when $L$ is an odd rank vector bundle instead of a line bundle, using exactly the same arguments, but substituting \rref{prop:split_odd} for \rref{cor:L_circ}.
\end{remark}

We deduce the following splitting principle:
\begin{corollary}
Let $X \in \Sm_S$, and $E \to X$ be a vector bundle of constant odd rank. Then there exists a morphism $f\colon Y \to X$ in $\Sm_S$ whose image in $\SH(S)[\eta^{-1}]$ admits a section, and a vector bundle $F \to Y$ such that $f^*E \simeq F \oplus 1$.
\end{corollary}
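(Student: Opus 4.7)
My plan is to take $Y$ to be a slight enlargement of the complement $E^\circ$ of the zero-section in $E$, with $f$ being the projection $p \colon E^\circ \to X$ post-composed with an $\Ab^1$-weak equivalence. On $E^\circ$ the pullback $p^*E$ carries a canonical nowhere-vanishing \emph{tautological section} $\tau \colon 1 \hookrightarrow p^*E$, given by the graph of the open immersion $E^\circ \hookrightarrow E$, exactly as in \rref{def:pi}. This yields a short exact sequence of vector bundles
\[
0 \to 1 \xrightarrow{\tau} p^*E \to F_0 \to 0
\]
on $E^\circ$, with $F_0$ of constant even rank. The ``section'' requirement will follow from \rref{prop:split_odd} applied with base $X$ in place of $S$, which is legitimate since the proof of that proposition relies only on \rref{cor:L_circ}, \rref{cor:PE_odd}, and \rref{lemm:Th_PE_X}, each of which is intrinsic to its base scheme.

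The subtle point is that the sequence above need not split as a sequence of vector bundles in general. To remedy this, I would replace $E^\circ$ by the affine bundle $f_1 \colon Y \to E^\circ$ parametrising $\Oc$-linear retractions $p^*E \to 1$ of $\tau$: this $Y$ is a torsor under the vector bundle $F_0^\vee$, hence Zariski-locally trivial with affine fibres and smooth over $S$, so $f_1$ is an $\Ab^1$-weak equivalence. Setting $f := p \circ f_1$ and $F := f_1^*F_0$, the tautological retraction living over $Y$ splits the pulled-back sequence and yields the desired isomorphism $f^*E \simeq F \oplus 1$ of vector bundles on $Y$.

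It remains to produce the section of $\Sup f$ in $\SH(S)[\eta^{-1}]$. Applying \rref{prop:split_odd} over the base $X$ gives a section of $p \colon E^\circ \to X$ in $\SH(X)[\eta^{-1}]$; pushing forward via $g_\sharp$ for the structural morphism $g \colon X \to S$ (see \rref{p:sharp}) produces a section of $\Sup p$ in $\SH(S)[\eta^{-1}]$, and composing with the inverse of $\Sup f_1$ (an isomorphism in $\SH(S)[\eta^{-1}]$, since $f_1$ is an $\Ab^1$-weak equivalence) gives the required section of $\Sup f$. The main obstacle is precisely the one addressed by step two: obtaining the decomposition at the level of actual vector bundles rather than merely up to $\Ab^1$-homotopy. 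The affine bundle of retractions is exactly designed to trivialise the extension class, and every other step is formal.
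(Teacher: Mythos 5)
Your proof is correct and is essentially the paper's argument: the paper works with $(E^\vee)^\circ$ and dualises the tautological section of $p^*E^\vee$ to realise $1$ as a quotient of $p^*E$, then splits the extension over an affine bundle of splittings and invokes \rref{prop:split_odd}; you work with $E^\circ$ and realise $1$ as a subbundle via the tautological section, which is an immaterial variant. The reduction to a section of $\Sup f$ via $g_\sharp$ and the $\Ab^1$-invertibility of the affine bundle is likewise the same.
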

\begin{proof}
Applying the functor $\SH(X)[\eta^{-1}] \to \SH(S)[\eta^{-1}]$ of \rref{p:sharp} we may assume that $X=S$. Let us denote by $p\colon (E^\vee)^\circ \to S$ the projection. Then $p^*E^\vee$ admits a nowhere vanishing section $s$. Its dual $s^\vee \colon p^*E \to 1$ is surjective. Letting $Q = \ker s^\vee$, we have an exact sequence of vector bundles over $(E^\vee)^\circ$
\begin{equation}
\label{eq:ses:1_E_Q}
0 \to Q \to p^*E \to 1 \to 0.
\end{equation}
Then we may find an affine bundle $g \colon Y \to (E^\vee)^\circ$ along which the pullback of the sequence \eqref{eq:ses:1_E_Q} splits (we may take for $Y$ the scheme parametrising the sections of $p^*E^\vee \to Q^\vee$, see e.g.\ \cite[p.243]{Riou}). Then $\Sup g \colon \Sup Y \to \Sup (E^\vee)^\circ$ is an isomorphism in $\SH(S)$, and $\Sup p\colon \Sup (E^\vee)^\circ \to \Un_S$ admits a section in $\SH(S)[\eta^{-1}]$ by \rref{prop:split_odd}. So we may set $f =p \circ g$.
\end{proof}

\subsection{Thom spaces of tensor products by line bundles}
\label{sect:tw_V}
We are now in position to slightly improve the result obtained in \S\ref{sect:tw_doubles_squares}.

\begin{lemma}
\label{lemm:T_u}
Let $E \to S$ be a vector bundle and $u \in H^0(S,\Gm)$. Assume that $E$ has constant rank $r$. Then in the notation of \rref{p:Thom_funct} and  \rref{p:langle_rangle}, we have in $\SH(S)[\eta^{-1}]$
\[
\Th(u \id_E) =\langle u^r \rangle \colon \Th_S(E) \to \Th_S(E).
\]
\end{lemma}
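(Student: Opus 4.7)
The plan is to reduce to the case of a trivial vector bundle via Zariski descent, and then carry out a direct computation.

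First, I would reduce to the case $E = 1^{\oplus r}$. Choose a Zariski open cover $\{U_\alpha\}$ of $S$ trivializing $E$. By \rref{p:hyperdescent}, $\Th_S(E)$ is the homotopy colimit of the associated Čech-type diagram of Thom spaces of restrictions of $E$, so two endomorphisms of $\Th_S(E)$ in $\SH(S)[\eta^{-1}]$ agree iff their precompositions with each natural map $\iota_V\colon \Th_V(E|_V) \to \Th_S(E)$ (for $V$ an intersection from the cover) agree. Both $\Th(u\id_E)$ and $\langle u^r \rangle$ commute with such restrictions: the former by functoriality of $\Th$, the latter by \eqref{eq:<>_commutes}. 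Hence it suffices to treat the case $E = 1^{\oplus r}$.

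In that situation, \eqref{eq:Thom_sum} identifies $\Th_S(1^{\oplus r}) = \T^{\wedge r}$, and since $u\id_{1^{\oplus r}} = (u\id_1)^{\oplus r}$ and $\Th$ converts direct sums into smash products, we obtain
\[
\Th(u\id_{1^{\oplus r}}) = \Th(u\id_1)^{\wedge r}.
\]
By the definition of $\langle u \rangle$ in \rref{p:langle_rangle}, the endomorphism $\Th(u\id_1) \in \Aut_{\SH(S)}(\T)$ coincides with $\langle u \rangle \wedge \id_\T$. Smashing $r$ copies thus yields $\Th(u\id_1)^{\wedge r} = \langle u \rangle^{\wedge r} \wedge \id_{\T^{\wedge r}}$, and it remains to identify $\langle u \rangle^{\wedge r}$ with $\langle u^r \rangle$ in $\Aut_{\SH(S)}(\Un_S^{\wedge r}) = \Aut_{\SH(S)}(\Un_S)$. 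This I would prove inductively via the interchange law
\[
\langle u \rangle \wedge \langle v \rangle = (\langle u \rangle \wedge \id_{\Un_S}) \circ (\id_{\Un_S} \wedge \langle v \rangle) = \langle u \rangle \circ \langle v \rangle = \langle uv \rangle,
\]
where the middle step uses that $\langle u \rangle \wedge \id_{\Un_S}$ and $\id_{\Un_S} \wedge \langle v \rangle$ both reduce to $\langle u \rangle$ and $\langle v \rangle$ respectively under the unit isomorphism $\Un_S \wedge \Un_S \simeq \Un_S$, while the last step is \eqref{eq:<>_mult}.

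The only real work lies in the bookkeeping for the trivial case: correctly tracking $\Th$ of a direct sum as a smash product of $\Th$'s, and the multiplicativity of $\langle \cdot \rangle$ under smash products. No deeper input beyond \rref{p:langle_rangle}, \rref{p:hyperdescent}, \eqref{eq:Thom_sum}, and \eqref{eq:<>_mult}--\eqref{eq:<>_commutes} is required, and in fact the argument shows the identity already in $\SH(S)$ before inverting $\eta$.
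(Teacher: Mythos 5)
The computation in the trivial case is fine, but the reduction to it is not. Your descent step asserts that two endomorphisms of $\Th_S(E)$ in $\SH(S)[\eta^{-1}]$ coincide as soon as their precompositions with the maps $\Th_V(E|_V) \to \Th_S(E)$ coincide for all intersections $V$ of the cover. That is false in general: by \rref{p:hyperdescent} the Thom space is only a \emph{homotopy} colimit of the \v{C}ech diagram, and homotopy colimits are merely weak colimits in the homotopy category. Maps out of $\Th_S(E)$ are not detected by their restrictions to the terms of the diagram; already for a two-element cover (a Mayer--Vietoris homotopy pushout) the difference of two maps agreeing on both pieces can be a nonzero map factoring through the suspension of the intersection term (the $\lim^1$-type ambiguity in the Milnor sequence). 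Note that the paper uses \rref{p:hyperdescent} only to verify locally that an object vanishes or that a map is an isomorphism (e.g.\ in \rref{prop:L_circ}, \rref{lemm:Th_odd}, \rref{prop:PE_odd}) --- statements that do pass through homotopy colimits --- never to verify an equality of two given morphisms.

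The paper's proof is built precisely to avoid this: it first reduces to odd rank by adding a trivial summand (using \eqref{eq:Thom_sum} and the definition of $\langle u \rangle$), then replaces $E \to S$ by $\Oc(-1) \to \Pp(E)$ via \rref{lemm:Th_PE_X}, and finally replaces $S$ by $L^\circ$ using \rref{cor:L_circ_faithful}: faithfulness of $p^*$ is the correct substitute for ``check locally'' when the statement is an equality of morphisms. Both of these last two steps genuinely require $\eta$ to be inverted, which is why the lemma is stated in $\SH(S)[\eta^{-1}]$; your closing claim that the identity already holds in $\SH(S)$ for arbitrary $E$ is therefore also unsubstantiated (it is only established for trivial bundles). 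To repair your argument you would need either to produce the coherence data making $\langle u^r\rangle$ a map of \v{C}ech diagrams strictly compatible with $\Th(u\id_E)$, or to replace the Zariski cover by a single map along which pullback is faithful --- which is exactly the route the paper takes.
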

\begin{proof}
Since, under the identification $\Th_S(E\oplus 1) = \Th_S(E) \wedge \Th_S(1)$ (see \eqref{eq:Thom_sum}) we have
\[
\Th(u \id_{E\oplus 1})= \Th(u \id_E) \wedge \Th(u\id_1) =\Th(u \id_E) \wedge \langle u \rangle,
\]
we may replace $E$ with $E\oplus 1$ if necessary, and thus assume that $r$ is odd. The $\Gm$-equivariant isomorphism $E^\circ \simeq \Oc(-1)^\circ$ (see \rref{p:Th_PE_X}) yields a commutative square in $\SH(S)$
\[ \xymatrix{
\Th_{\Pp(E)}(\Oc(-1)) \ar[rr]^{\Th(u \id_{\Oc(-1)})} \ar[d] && \Th_{\Pp(E)}(\Oc(-1)) \ar[d] \\ 
\Th_S(E) \ar[rr]^{\Th(u \id_E)} && \Th_S(E)
}\]
where the vertical arrows coincide, and are isomorphisms in $\SH(S)[\eta^{-1}]$ by \rref{lemm:Th_PE_X}. In view of \eqref{eq:<>_commutes} we may replace $E \to S$ with $\Oc(-1) \to \Pp(E)$, and thus assume that $E$ is a line bundle. By \rref{cor:L_circ_faithful}, we may replace $S$ with $E^\circ$, and thus assume that the line bundle $E \to S$ admits a trivialisation $\alpha \colon 1 \xrightarrow{\sim} E$. Then we have a commutative square of isomorphisms in $\SH(S)$
\[ \xymatrix{
\Th_S(E) \ar[rr]^{\Th(u \id_E)}  && \Th_S(E) \\ 
\Th_S(1) \ar[rr]^{\Th(u \id_1)} \ar[u]^{\Th(\alpha)} && \Th_S(1) \ar[u]_{\Th(\alpha)}
}\]
By definition $\Th(u \id_1) = \langle u \rangle$, and we deduce using  \eqref{eq:<>_commutes} that $\Th(u \id_E) = \langle u \rangle  \in \Aut_{\SH(S)}(\Th_S(E))$. Since $\langle u^2 \rangle =\id$ by \rref{lemm:square} and $r$ is odd, it follows that $\langle u \rangle = \langle u^r \rangle$, concluding the proof.
\end{proof}

\begin{proposition}
\label{prop:twist_square}
Let $L \to S$ be a line bundle, and $V \to S$ a vector bundle of constant rank $r$. If $s \in \Zz$ is such that $rs$ is even, then there exists an isomorphism in $\SH(S)[\eta^{-1}]$
\[
\Th_S(V) \simeq \Th_S(V \otimes L^{\otimes s}).
\]
\end{proposition}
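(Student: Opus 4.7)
The plan is to follow the strategy used for \rref{prop:double_square}, but invoke \rref{lemm:T_u} in place of the ad hoc matrix computation there. This change allows me to handle an arbitrary vector bundle $V$ (not just a power of $L$) and makes transparent why the hypothesis $rs$ even is the precise one that goes through. The two ingredients are: trivialization-independence in $\SH(S)[\eta^{-1}]$ of a certain canonical isomorphism in the case when $L$ is trivial, and descent via the split coequalizer of \dref{cor:split_line}{cor:split_line:2} along $p \colon L^\circ \to S$.

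First I would treat the case where $L$ admits a trivialization $\alpha \colon 1 \xrightarrow{\sim} L$, which yields an isomorphism $\Th(\id_V \otimes \alpha^{\otimes s}) \colon \Th_S(V) \xrightarrow{\sim} \Th_S(V \otimes L^{\otimes s})$ in $\SH(S)$. Any other trivialization is $u\alpha$ for some $u \in H^0(S,\Gm)$, and a direct computation gives
\[
\Th(\id_V \otimes (u\alpha)^{\otimes s}) = \Th(u^s \id_{V \otimes L^{\otimes s}}) \circ \Th(\id_V \otimes \alpha^{\otimes s}).
\]
By \rref{lemm:T_u}, the first factor equals $\langle u^{rs} \rangle$ in $\SH(S)[\eta^{-1}]$, and $\langle u^{rs} \rangle = 1$ by \rref{lemm:square} and \eqref{eq:<>_mult} since $rs$ is even. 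Hence the isomorphism $\Th(\id_V \otimes \alpha^{\otimes s})$ is independent of $\alpha$ in $\SH(S)[\eta^{-1}]$.

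For a general $L$, I would descend using $p \colon L^\circ \to S$ and the tautological trivialization $\tau \colon 1 \xrightarrow{\sim} p^*L$. Applying the previous step over the base $L^\circ$ produces an isomorphism $\varphi \colon \Th_{L^\circ}(V) \xrightarrow{\sim} \Th_{L^\circ}(V \otimes L^{\otimes s})$ in $\SH(S)[\eta^{-1}]$; applied over $L^\circ \times_S L^\circ$, it shows that the two pullbacks $p_1^*\varphi$ and $p_2^*\varphi$ along the projections coincide, since they are constructed from the trivializations $p_1^*\tau$ and $p_2^*\tau$ of $q^*L$ with $q = p \circ p_1 = p \circ p_2$. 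Then \dref{cor:split_line}{cor:split_line:2} produces a unique morphism $f \colon \Th_S(V) \to \Th_S(V \otimes L^{\otimes s})$ with $f \circ p = p \circ \varphi$, and similarly a $g$ from $\varphi^{-1}$; the fact that $p$ is an epimorphism in $\SH(S)[\eta^{-1}]$ forces $f$ and $g$ to be mutually inverse. The only real content is the trivialization-independence step, which is the source of the hypothesis $rs$ even; once \rref{lemm:T_u} is available it reduces to the identity $\langle u^{rs}\rangle = 1$, and the remainder of the argument repeats that of \rref{prop:double_square} verbatim.
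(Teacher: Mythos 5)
Your proposal is correct and follows essentially the same route as the paper's proof: establish trivialisation-independence of $\Th(\id_V\otimes\alpha^{\otimes s})$ in $\SH(S)[\eta^{-1}]$ via \rref{lemm:T_u} and \rref{lemm:square}, then descend along $p\colon L^\circ\to S$ using the split coequaliser of \dref{cor:split_line}{cor:split_line:2} exactly as in the proof of \rref{prop:double_square}. The only cosmetic difference is that the paper first reduces to $s\geq 0$ and writes the factorisation as $\Th(\id_V\otimes\alpha^{\otimes s})\circ\Th(u^s\id_V)$ rather than with the unit acting on the target, which changes nothing.
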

\begin{proof}
Upon replacing $V$ with $V \otimes L^{\otimes s}$, we may assume that $s \geq 0$. When $\alpha \colon 1 \xrightarrow{\sim}L$ is a trivialisation of the line bundle $L$ over $S$ we have an isomorphism in $\SH(S)$
\begin{equation}
\label{eq:Th_V_L}
\Th(\id_V \otimes \alpha^{\otimes s}) \colon \Th_S(V) \to \Th_S(V \otimes L^{\otimes s}).
\end{equation}
Any trivialisation of $L$ is of the form $u \alpha$ for some $u \in H^0(S,\Gm)$, and we have in $\SH(S)[\eta^{-1}]$, by \rref{lemm:T_u} and \rref{lemm:square} 
\[
\Th(\id_V \otimes (u \alpha)^{\otimes s}) = \Th(\id_V \otimes \alpha^{\otimes s}) \circ \Th(u^s\id_V) = \Th(\id_V \otimes \alpha^{\otimes s}) \circ \langle u^{rs} \rangle = \Th(\id_V \otimes \alpha^{\otimes s}).
\]
It follows that the image of the isomorphism \eqref{eq:Th_V_L} in $\SH(S)[\eta^{-1}]$ is independent of the choice of the trivialisation $\alpha$, and we conclude as in the proof of \rref{prop:double_square}.
\end{proof}

\section{Classifying spaces and characters}

\numberwithin{theorem}{subsection}
\numberwithin{lemma}{subsection}
\numberwithin{proposition}{subsection}
\numberwithin{corollary}{subsection}
\numberwithin{example}{subsection}
\numberwithin{definition}{subsection}
\numberwithin{remark}{subsection}

\subsection{Models for \'etale classifying spaces}
Here we recall some facts concerning the geometric models of the \'etale classifying space of a linear algebraic group given in \cite[\S4.2]{MV-A1}.

\begin{para}
\label{p:def_BG}
Let $G$ be a linear algebraic group over $S$. Let $(V_m,U_m,f_m)$, for $m\in \Nn \smallsetminus \{0\}$, be an admissible gadget with a nice (right) $G$-action, in the sense of \cite[Definition~4.2.1]{MV-A1}. Here $V_m \to S$ are $G$-equivariant vector bundles, and $U_m \subset V_m$ are $G$-invariant open subschemes where the $G$-action is free. Set $E_mG=U_m$ and $B_mG = (E_mG)/G$. Let us define $\BB G \in \Spc(S)$ as the colimit of the motivic spaces $B_m G$ as $m$ runs over $\Nn \smallsetminus \{0\}$. It is proved in \cite[Proposition~4.2.6]{MV-A1} that the weak-equivalence class of $\BB G$ does not depend on the choice of $(V_m,U_m,f_m)$. More precisely if $(V_m,U_m,f_m),(V'_m,U'_m,f'_m)$ are admissible gadgets with a nice $G$-action, and $U_m \to U'_m$ are $G$-equivariant morphisms commuting with the morphisms $f_m,f_m'$, then the induced morphism of motivic spaces $\colim_m (U_m/G) \to \colim_m (U'_m/G)$ is a weak equivalence. In the sequel we will refer to a system $(V_m,U_m,f_m)$ as above as a model for $\BB G$, and use the notation $E_mG,B_mG$.
\end{para}

\begin{para}
\label{p:colim_hocolim}
In the situation of \rref{p:def_BG}, since $B_1G$ is cofibrant and each $B_m G \to B_{m+1} G$ is a cofibration (for the model structure of \cite{MV-A1}), it follows that the colimit $\BB G$ is canonically weakly equivalent to the homotopy colimit of the motivic spaces $B_mG$ in $\Spc(S)$ (see e.g.\ \cite[Theorem~19.9.1]{Hirschhorn}).
\end{para}

\begin{para}
\label{p:pointed}
Let $G$ be a linear algebraic group over $S$, and choose a model for $\BB G$. Since the map $\colim_m E_mG \to S$ is a weak-equivalence of motivic spaces \cite[Proposition~4.2.3]{MV-A1}, we obtain a canonical morphism $S \to \BB G$ in $\Hot(S)$. We say that the model is \emph{pointed} if we are given an $S$-point of $E_1 G$. This yields map $S \to \BB G$ in $\Spc(S)$, whose image in $\Hot(S)$ is the canonical morphism described just above.
\end{para}

\begin{para}
\label{p:Totaro_model}
(See also \cite[p.133]{MV-A1}.) Let us fix an integer $n\in \Nn$ and describe an explicit model for $\BGL_n$. Fix an integer $p \geq n$ (we will typically take $p=n$). For $s \in \Nn$, we denote by $\Gr(n,s)$ the grassmannian of rank $n$ subbundles $U\subset 1^{\oplus s}$ over $S$ (for us a subbundle is locally split, so $1^{\oplus s}/U$ is a vector bundle). For each $m\in \Nn \smallsetminus \{0\}$, consider the $S$-scheme $V_{m,p}$ parametrising the vector bundles maps $1^{\oplus n} \to 1^{\oplus pm}$; then $V_{m,p} \to S$ is a vector bundle. Let $U_{m,p}$ the open subscheme of $V_{m,p}$ parametrising those vector bundle maps admitting Zariski-locally a retraction (i.e.\ making $1^{\oplus n}$ a subbundle of $1^{\oplus pm}$). Then the natural left $\GL_n$-action on $1^{\oplus n}$ induces a right $\GL_n$-action on $U_{m,p}$, which is free, and the quotient $U_{m,p}/\GL_n$ can be identified with the grassmannian $\Gr(n,pm)$. The inclusion $1^{\oplus m} \subset 1^{\oplus m+1}$ given by the vanishing of the last coordinate induces an inclusion
\[
1^{\oplus pm} = (1^{\oplus m})^{\oplus p} \subset (1^{\oplus m+1})^{\oplus p} = 1^{\oplus p(m+1)},
\]
which yields a $\GL_n$-equivariant morphism $f_{m,p} \colon U_{m,p} \to U_{m+1,p}$.

Then the family $(V_{m,p},U_{m,p},f_{m,p})$ is an admissible gadget with a nice $\GL_n$-action. Indeed the first condition of \cite[Definition~4.2.1]{MV-A1} is satisfied because $U_{1,p}$ possesses an $S$-point, and the second condition is satisfied with $j=2i$. The fact that the group $\GL_n$ is special implies the validity of condition (3) of \cite[Definition~4.2.4]{MV-A1}. We thus obtained a model for $\BGL_n$. We have just seen that this model is pointed (in the sense of \rref{p:pointed}); a canonical pointing when $p=n$ is induced by the identity of $1^{\oplus n}$.
\end{para}

\begin{para}
\label{p:B_closed}
Let $H \subset G$ be an inclusion of linear algebraic groups over $S$. Then any admissible gadget with a nice $G$-action is also one with a nice $H$-action (where the $H$-action is given by restricting the $G$-action). Indeed, the only non-immediate point is condition (3) of \cite[Definition~4.2.4]{MV-A1}. So let $F$ be a smooth $S$-scheme with a free right $H$-action. Consider the quotient $E=(F \times G)/H$, where the right $H$-action on $G$ is given by letting $h \in H$ act via $g \mapsto h^{-1}g$. Right multiplication in $G$ induces a free right $G$-action on $E$. For any $U \in \Sm_S$ with a right $G$-action we have isomorphisms
\[
(E \times U)/G \simeq ((F \times G)/H \times U)/G \simeq  (F \times (G\times U)/G)/H \simeq (F\times U)/H,
\]
which are functorial in $U$, and thus permit to identify the morphisms $(E \times U)/G \to E/G$ and $(F \times U)/H \to F/H$. Since the former is an epimorphism in the Nisnevich topology (as the group $G$ is nice), so is the latter.

Thus given a model for $\BB G$, we obtain a model for $\BB H$, where $E_mH=E_mG$ with the induced $H$-action. This yields morphisms
\[
B_m H = (E_mH)/H = (E_mG)/H \to (E_mG)/G = B_mG
\]
which are compatible with the transition maps as $m$ varies, and thus a map $\BB H \to \BB G$.
\end{para}

\begin{para}
\label{p:model_exists}
(See also \cite[Remark~4.2.7]{MV-A1}.)
Assume that $G$ is a linear algebraic group over $S$, and fix an embedding $G \subset \GL_n$ as a closed subgroup. By \rref{p:B_closed} every (pointed) model for $\BGL_n$ induces a (pointed) model for $\BB G$. Since $\BGL_n$ admits a pointed model by \rref{p:Totaro_model}, so does $\BB G$.
\end{para}

\subsection{Products}
\begin{para}
Let $G,G'$ be linear algebraic groups over $S$. Choose admissible gadgets $(V_m,U_m,f_m)$ with a nice $G$-action, and $(V'_m,U'_m,f'_m)$ with a nice $G'$-action (in the sense of \cite[Definition~4.2.1]{MV-A1}, recall from \rref{p:model_exists} that such exist). Then the family $(V_m \times V_m',U_m \times U_m',f_m \times f_m')$ constitutes an admissible gadget with a nice $G \times G'$-action. Indeed, to check the last condition of \cite[Definition 4.2.4]{MV-A1}, let $T \to X$ be a $G\times G'$-torsor in $\Sm_S$. Then the projection $(T \times U \times U')/(G\times G') \to T/(G\times G')$ factors as
\[
(T_1 \times U')/G' \to T_1/G'
\]
where $T_1 = (T \times U)/G$ followed by
\[
(T_2 \times U)/G \to T_2/G
\]
where $T_2 = T/G'$. Each morphism is an epimorphism in the Nisnevich topology by assumption, hence so is their composite.

Under this choice of a model for $\BB G$, we have
\begin{equation}
\label{eq:isomo_B_prod}
B_m(G \times G') = B_mG \times B_mG'.
\end{equation}
\end{para}

\begin{lemma}
\label{lemm:B_prod}
If $G,G'$ are linear algebraic groups over $S$, we have an isomorphism
\[
\BB(G \times G') \simeq \BB G \times \BB G' \in \Ho(S).
\]
\end{lemma}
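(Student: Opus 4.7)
My plan is to verify the isomorphism directly on the level of $\Spc(S)$, using the explicit models of \rref{p:def_BG} together with the identification \eqref{eq:isomo_B_prod} established in the paragraph preceding the lemma. Concretely, I fix admissible gadgets $(V_m,U_m,f_m)$ and $(V'_m,U'_m,f'_m)$ with nice $G$- and $G'$-actions, so that by construction
\[
\BB(G \times G') = \colim_m B_m(G \times G') = \colim_m (B_m G \times B_m G'),
\]
while $\BB G \times \BB G' = (\colim_m B_m G) \times (\colim_m B_m G')$. The projections $G \times G' \to G$ and $G \times G' \to G'$ yield a canonical comparison map between these two motivic spaces.

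The key observation is that both colimits and finite products in $\Spc(S)$ (that is, in simplicial presheaves on $\Sm_S$) are computed objectwise and simplicial-level-wise. Since filtered colimits commute with finite products in the category of simplicial sets, the comparison map is in fact an isomorphism already in $\Spc(S)$, not merely in the homotopy category.

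To transfer this to $\Ho(S)$, I would note that each $B_m G$ is cofibrant and each transition map $B_m G \to B_{m+1} G$ is a cofibration in the Morel--Voevodsky model structure (as used in \rref{p:colim_hocolim}), and likewise for $G'$, so both $\BB G$ and $\BB G'$ are cofibrant. Because the cartesian product equips $\Spc(S)$ with a (monoidal) model structure, the categorical product of two cofibrant objects represents the derived product in $\Ho(S)$, and the isomorphism in $\Spc(S)$ descends to an isomorphism in $\Ho(S)$.

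I do not anticipate any substantive obstacle: the statement is essentially formal once the models have been chosen compatibly via \eqref{eq:isomo_B_prod}. The only point warranting attention is the identification of the categorical product in $\Spc(S)$ with the homotopy product in $\Ho(S)$, which is ensured by cofibrancy of $\BB G$ and $\BB G'$ together with the monoidal character of the model structure.
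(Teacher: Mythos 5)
Your argument is correct, but it takes a genuinely different route from the paper's. The paper stays entirely in the homotopy category: starting from the identification $B_m(G\times G')=B_mG\times B_mG'$, it writes $\BB(G\times G')$ as $\hocolim_m(B_mG\times B_mG')$, replaces this diagonal homotopy colimit by a double one via a cofinality argument, and then pulls the homotopy colimits out of the product one variable at a time, using that $-\times\mathcal{Z}$ commutes with homotopy colimits. You instead perform the colimit manipulation strictly at the level of presheaves: the diagonal is final in $\Nn\times\Nn$ and filtered colimits commute with finite products objectwise, so the comparison map is an isomorphism already in $\Spc(S)$. The only homotopical input you then need is the identification of the point-set product with the product in $\Ho(S)$, which you justify correctly via cofibrancy and the cartesian monoidal model structure (in the injective-type structures used by Morel--Voevodsky every object is cofibrant and $-\times\mathcal{Z}$ is left Quillen, so this is automatic; note that the paper uses this same identification silently in the last step of its own chain of isomorphisms). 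The trade-off is that your version is more elementary but more model-dependent, since it exploits that $\BB G$ is the strict colimit of the $B_mG$, whereas the paper's argument only requires it to be the homotopy colimit (which is the content of \rref{p:colim_hocolim}). Both are complete proofs.
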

\begin{proof}
Since the product with a given motivic space commutes with homotopy colimits, we have isomorphisms in $\Ho(S)$
\begin{align*}
\BB (G \times G')&\simeq \hocolim_m (B_mG \times B_mG') && \text{by \eqref{eq:isomo_B_prod} and \rref{p:colim_hocolim}}\\
&\simeq \hocolim_m \hocolim_d (B_mG \times B_dG') && \text{by a cofinality argument}\\
&\simeq \hocolim_m (B_mG \times \hocolim_d B_dG') \\
&\simeq (\hocolim_m B_mG) \times (\hocolim_d B_dG')\\
&\simeq \BB G \times \BB G'&&\text{by \rref{p:colim_hocolim}.}\qedhere
\end{align*}
\end{proof}

\subsection{Characters}
In this section, we discuss general facts relating the classifying space of a linear algebraic group $G$ to that of the kernel $H$ of a character of $G$, which will be applied to explicit situations in \S\ref{sect:compute_BG}.

\begin{para}
\label{p:C_m}
Let $G$ be a linear algebraic group over $S$, and fix a model for $\BB G$ (see \rref{p:def_BG}). Assume given a character of $G$, that is a morphism of algebraic groups $\chi \colon G \to \Gm$. Considering the right $G$-action on $\Ab^1$ given by letting $g\in G$ act via $\lambda \mapsto \chi(g)^{-1} \lambda$, we define for each $m \in \Nn \smallsetminus \{0\}$ a line bundle over $B_mG$:
\begin{equation}
\label{def:C_m}
C_m(\chi)= (E_mG \times \Ab^1)/G.
\end{equation}
The assignment $\chi \mapsto C_m(\chi)$ satisfies
\begin{equation}
\label{eq:prod_C_m}
C_m(\chi \chi') = C_m(\chi) \otimes C_m(\chi'),
\end{equation}
yielding group morphisms
\[
\Hom_{\text{alg.\ groups}}(G,\Gm) \to \Pic(B_mG) \quad ; \quad \chi \mapsto C_m(\chi).
\]

Let now $H \subset G$ be a closed subgroup, and consider the morphisms $B_m H \to B_mG$ defined in \rref{p:B_closed}. If $\chi|_H$ denotes the restriction of the character $\chi$ to $H$, then
\begin{equation}
\label{eq:pb_C_m_closed}
C_m(\chi) \times_{B_mG} B_mH \simeq C_m(\chi|_H).
\end{equation}

If $G'$ is a linear algebraic group over $S$, letting $\tilde{\chi}\colon G \times G' \to G \xrightarrow{\chi} \Gm$ be the induced character of $G \times G'$, we have
\begin{equation}
\label{eq:pb_C_m_prod}
C_m(\tilde{\chi}) = C_m(\chi) \times B_mG'.
\end{equation}
\end{para}

\begin{para}
\label{p:GH}
Let $G$ be a linear algebraic group over $S$, and $\chi$ a surjective character of $G$. Letting $H=\ker \chi$, we thus have an exact sequence of algebraic groups over $S$
\begin{equation}
1 \to H \to G \xrightarrow{\chi} \Gm \to 1.
\end{equation}
Let us fix a model for $\BB G$. As explained in \rref{p:B_closed}, this yields a model for $\BB H$, and morphisms $p_m \colon B_mH \to B_mG$ for $m \in \Nn \smallsetminus \{0\}$. By \rref{p:C_m}, we also have a line bundle $C_m(\chi)$ over $B_mG$, such that
\[
C_m(\chi)^{\circ} = ((E_mG \times \Ab^1)/G)^\circ = (E_mG \times \Gm)/G = ((E_mG)/H \times \Gm)/\Gm= (E_mG)/H =B_mH.
\]
In view of \rref{eq:E_circ_dist}, this yields a cofiber sequence in $\Spcp(S)$, for each $m\in \Nn\smallsetminus \{0\}$
\begin{equation}
\label{eq:BH_BG_m}
(B_mH)_+ \xrightarrow{{p_m}_+} (B_mG)_+ \to \Th_{B_mG}(C_m(\chi)).
\end{equation}
More generally (as in \rref{p:purity}), if $V \to B_mG$ is a vector bundle, we have a cofiber sequence in $\Spcp(S)$,
\begin{equation}
\label{eq:BH_BG_Th}
\Th_{B_mH}(V) \to \Th_{B_mG}(V) \to \Th_{B_mG}(C_m(\chi) \oplus V).
\end{equation}
\end{para}
\begin{para}
In the situation of \rref{p:GH}, let us define
\[
\Th_{\BB G}(C(\chi)) =\colim_m \Th_{B_mG}(C_m(\chi)) \in \Spcp(S).
\]
As in \rref{p:colim_hocolim}, this coincides with the homotopy colimit (the transition morphisms are again monomorphisms, being directed colimits of such). We will also write $\Th_{\BB G}(C(\chi)) \in \Spt(S)$ instead of $\Su \Th_{\BB G}(C(\chi))$. Taking the (homotopy) colimit of \eqref{eq:BH_BG_m} yields a cofiber sequence in $\Spcp(S)$
\begin{equation}
\label{eq:BH_BG}
(\BB H)_+ \to (\BB G)_+ \to \Th_{\BB G}(C(\chi)).
\end{equation}
\end{para}

\begin{para}
In the situation of \rref{p:GH}, assume that the model for $\BB G$ is pointed. Then we have a commutative diagram of $S$-schemes with cartesian squares
\begin{equation}
\label{squ:e_i_j}
\begin{gathered}
\xymatrix{
G\ar[r]^{\chi} \ar[d]^{e_1} & \Gm \ar[d]^{j_1} \ar[r] & S \ar[d]^{i_1}\\ 
E_1G \ar[r] & B_1H \ar[r] & B_1G
}
\end{gathered}
\end{equation}
where $e_1$ is induced by the $S$-point and the $G$-action on $E_1G$, and $j_1$, resp.\ $i_1$, is obtained by taking the $H$-quotient, resp.\ $G$-quotient of $e_1$. Composing $i_1$ and $j_1$ with the natural maps $B_1 G \to \BB G$ and $B_1 H \to\BB H$ respectively, we obtain maps $i \colon S \to \BB G$ and $j \colon \Gm \to \BB H$ in $\Spc(S)$. Note that $i$ is the map described in \rref{p:pointed}, and that the left-hand cartesian square in \eqref{squ:e_i_j} shows that the map $j \colon \Gm \to \BB H$ classifies the $H$-torsor $\chi \colon G \to \Gm$. 

The right-hand cartesian square in \eqref{squ:e_i_j} shows that the $\Gm$-torsor $B_1 H \to B_1G$ pulls back to the trivial torsor along $i_1$, which yields a trivialisation of the line bundle $i_1^*C_1(\chi)$ over $S$, and thus a morphism in $\Spcp(S)$
\[
t \colon T=\Th_S(1) \to \Th_{B_1G}(C_1(\chi)) \to \Th_{\BB G}(C(\chi)).
\]
We thus obtain a commutative diagram in $\Spcp(S)$, whose rows are cofiber sequences
\begin{equation}
\label{diag:BH_BG}
\begin{split}
\xymatrix{
(\BB H)_+\ar[r]  & (\BB G)_+ \ar[r] & \Th_{\BB G}(C(\chi)) \\ 
(\Gm)_+ \ar[r] \ar[u]^{j_+}& S_+ \ar[u]^{i_+} \ar[r] & T\ar[u]^{t} 
}
\end{split}
\end{equation}
\end{para}

\begin{para}
In the situation of \rref{p:GH}, using \rref{prop:L_circ} for the line bundle $C_m(\chi) \to B_mG$, and applying the functor $\Spcp(B_mG)[\eta^{-1}] \to \Spcp(S)[\eta^{-1}]$ of \rref{p:sharp}, we have homotopy cocartesian squares in $\Spcp(S)[\eta^{-1}]$
\[ \xymatrix{
\T \wedge (B_m H)_+\ar[d] \ar[r] & \Th_{B_m G}(C(\chi_m)) \ar[d] \\ 
\T \wedge (B_mG)_+ \ar[r] & {*}
}\]
which are compatible with the transition maps as $m$ varies by \rref{p:pi_pb}. Taking the homotopy colimit, and proceeding as in the proof of \rref{cor:L_circ}, we obtain an isomorphism in $\SH(S)[\eta^{-1}]$
\begin{equation}
\label{eq:splitting_BG_BH}
\Sup \BB H \simeq \Sup \BB G \vo \Sigma^{-2,-1}\Th_{\BB G}(C(\chi)) \in \SH(S)[\eta^{-1}].
\end{equation}
\end{para}

\section{Computations of classifying spaces}
\label{sect:compute_BG}

\subsection{Diagonalisable groups}

Using the embeddings $\Pp^k \subset \Pp^{k+1}$ of \rref{p:Pk} for $k \in \Nn$, we define, for $n \in \Zz$
\[
\Pp^\infty = \hocolim_k \Pp^k \in \Spc(S) \quad \text{and} \quad \Th_{\Pp^{\infty}}(\Oc(n)) = \hocolim_k \Th_{\Pp^k}(\Oc(n)) \in \Spcp(S),
\]
and as usual write $\Th_{\Pp^\infty}(\Oc(n)) \in \Spt(S)$ instead of $\Su  \Th_{\Pp^\infty}(\Oc(n))$. We have a natural map $\iota_\infty \colon S=\Pp^0 \to \Pp^\infty$ in $\Spc(S)$. For each $n \in \Zz$, the line bundle $\Oc(n)$ over $\Pp^0$ admits a canonical trivialisation, so that \eqref{eq:Sigma_triv_P} yields a canonical map in $\Spt(S)$
\begin{equation}
\label{eq:Sigma_n}
\Sigma^{2,1} \Un_S \to \Th_{\Pp^\infty}(\Oc(n)).
\end{equation}

\begin{proposition}
\label{prop:P_infty}
Let $n\in \Zz$. The following hold in $\SH(S)[\eta^{-1}]$:
\begin{enumerate}[(i)]
\item \label{prop:P_infty:1}
The morphism $\iota_\infty$ induces an isomorphism $\Un_S \simeq \Sup\Pp^\infty$.

\item \label{prop:P_infty:2}
If $n$ is odd, then $\Th_{\Pp^\infty}(\Oc(n)) = 0$.

\item \label{prop:P_infty:3}
If $n$ is even, then \eqref{eq:Sigma_n} induces an isomorphism $\Sigma^{2,1}\Un_S \simeq \Th_{\Pp^\infty}(\Oc(n))$.
\end{enumerate}
\end{proposition}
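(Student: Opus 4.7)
I would reduce all three parts to \rref{prop:Th_Pk} through a sequential homotopy colimit argument. Since $\Pp^\infty = \hocolim_k \Pp^k$ and $\Sup$ preserves homotopy colimits, and similarly $\Th_{\Pp^\infty}(\Oc(n)) = \hocolim_k \Th_{\Pp^k}(\Oc(n))$, the spectra under consideration are sequential hocolims over $k \in \Nn$ along the inclusions $i_k \colon \Pp^k \hookrightarrow \Pp^{k+1}$. The key observation is that any sequential hocolim over $\Nn$ may be computed by restricting to a cofinal (i.e.\ any infinite) subsequence, notably to the even or to the odd integers.

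For \rref{prop:P_infty:1}, I would restrict to even $k$. By \rref{cor:Pk_even}, each $\iota_{2m}\colon S\to \Pp^{2m}$ is already an isomorphism in $\SH(S)[\eta^{-1}]$, and the relation $\iota_{2m+2}=i_{2m+1}\circ i_{2m}\circ \iota_{2m}$ shows that these isomorphisms fit together into a natural transformation of sequential systems, yielding $\Un_S \simeq \Sup\Pp^\infty$ in the colimit, induced by $\iota_\infty$.

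For \rref{prop:P_infty:2} and \rref{prop:P_infty:3}, I would apply \rref{prop:Th_Pk} with $r=1$, $\Dc=(n)$ and $d=n$. If $n$ is odd, passing to the cofinal subsystem of odd $k$ kills every term by \dref{prop:Th_Pk}{prop:Th_Pk:1}, so the hocolim vanishes. If $n$ is even, I instead pass to the cofinal subsystem of even $k$; by \dref{prop:Th_Pk}{prop:Th_Pk:2}, the canonical map \eqref{eq:Sigma_triv_P} is an isomorphism $\Sigma^{2,1}\Un_S \xrightarrow{\sim} \Th_{\Pp^{2m}}(\Oc(n))$ for each $m$, and naturality of \eqref{eq:Sigma_triv_P} in $k$ (it arises from the pullback of the canonical trivialisation over $\Pp^0$ along $\iota_{2m}$) makes these isomorphisms compatible with the transitions, so the colimit recovers the map \eqref{eq:Sigma_n} as an isomorphism. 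No serious obstacle arises; the argument is essentially bookkeeping with cofinal subsystems once \rref{prop:Th_Pk} is in hand.
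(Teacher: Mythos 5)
Your argument is correct and is essentially the paper's own proof: apply \rref{prop:Th_Pk} with $\Dc=(n)$, pass to the cofinal subsystem of $k$ of the appropriate parity, and take the homotopy colimit, noting the compatibility of the maps \eqref{eq:Sigma_triv_P} with the transition maps. The paper states this more tersely but the reduction is identical.
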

\begin{proof}
We apply \rref{prop:Th_Pk} with $\Dc = (n)$, and so $\Oc(\Dc)=\Oc(n)$. We obtain that $\Sigma^{2,1}\Un_S \to \Th_{\Pp^k}(\Oc(n))$ is a weak equivalence in $\Spt(S)[\eta^{-1}]$ when $n,k$ are even. Taking the homotopy colimit over $k$ yields a weak equivalence $\Sigma^{2,1}\Un_S \to \Th_{\Pp^\infty}(\Oc(n))$ in $\Spt(S)[\eta^{-1}]$ when $n$ is even. This proves \eqref{prop:P_infty:3}. The other statements are deduced in a similar way from \rref{prop:Th_Pk}.
\end{proof}

\begin{para}
\label{p:BGm}
Consider the model for $\BB \Gm$ described in \rref{p:Totaro_model} with $p=n=1$, under the identification $\Gm = \GL_1$. Then $B_m\Gm = \Pp^{m-1}$, and thus $\BB \Gm = \Pp^{\infty}$. Furthermore, the line bundle $C_m(\id_{\Gm})$ over $B_mG$ defined in \eqref{def:C_m} may be identified with the tautological bundle $\Oc(-1)$ over $\Pp^{m-1}$.
\end{para}

\begin{theorem}
\label{th:B_mu}
Let $n\in \Nn \smallsetminus \{0\}$. The following hold in $\SH(S)[\eta^{-1}]$:
\begin{enumerate}[(i)]
\item The natural morphism $\Un_S \to \Sup\BB\Gm$ is an isomorphism.

\item If $n$ is odd, the natural morphism $\Un_S \to \Sup\BB\mu_n$ is an isomorphism.

\item
\label{th:B_mu:even}
If $n$ is even, the morphism $\Gm \to \BB \mun$ classifying the $\mu_n$-torsor $\Gm \to \Gm$ given by taking $n$-th powers induces an isomorphism $\Sup\Gm \simeq \Sup\BB\mu_n$.
\end{enumerate}
\end{theorem}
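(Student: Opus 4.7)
The plan is to handle (i) directly from the identification of $\BB\Gm$ with $\Pp^\infty$, and then obtain (ii) and (iii) by feeding the Kummer sequence $1 \to \mu_n \to \Gm \xrightarrow{\chi} \Gm \to 1$ into the general character setup of the previous subsection.

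For (i), the model of \rref{p:BGm} identifies $\BB\Gm = \Pp^\infty$, and the natural map $S \to \BB\Gm$ of \rref{p:pointed} is the colimit of the maps $\iota_k\colon S = \Pp^0 \to \Pp^k$ of \rref{p:Pk}, so the assertion is exactly \dref{prop:P_infty}{prop:P_infty:1}.

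For (ii) and (iii), set $G = \Gm$, $H = \mu_n$, with $\chi \colon \Gm \to \Gm$, $g \mapsto g^n$. Since $\chi = \id_{\Gm}^n$ as a character, the multiplicativity \eqref{eq:prod_C_m} combined with \rref{p:BGm} yields $C_m(\chi) \simeq \Oc(-1)^{\otimes n} = \Oc(-n)$ over $B_m\Gm = \Pp^{m-1}$, and hence $\Th_{\BB\Gm}(C(\chi)) \simeq \Th_{\Pp^\infty}(\Oc(-n))$ after taking the colimit. Feeding this into the cofiber sequence \eqref{eq:BH_BG} produces a cofiber sequence in $\Spt(S)$
\[
\Sup \BB\mu_n \to \Sup \BB\Gm \to \Th_{\Pp^\infty}(\Oc(-n)).
\]
When $n$ is odd, \dref{prop:P_infty}{prop:P_infty:2} gives $\Th_{\Pp^\infty}(\Oc(-n)) = 0$ in $\SH(S)[\eta^{-1}]$, so $\Sup\BB\mu_n \to \Sup\BB\Gm$ is an isomorphism there. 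Composing with the inverse of the isomorphism $\Un_S \simeq \Sup\BB\Gm$ from (i), and observing that the composite $S \to \BB\mu_n \to \BB\Gm$ of natural maps is the natural map $S \to \BB\Gm$ (both are induced by the canonical point of $E_1G$), two-out-of-three gives (ii).

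For (iii), assume $n$ even and apply $\Su$ to the diagram \eqref{diag:BH_BG}. By the paragraph following \eqref{squ:e_i_j}, the left vertical $j_+$ is induced by the map classifying the $\mu_n$-torsor $\chi \colon \Gm \to \Gm$, which is exactly the morphism in the statement. The middle vertical $i_+$ is an isomorphism in $\SH(S)[\eta^{-1}]$ by (i). For the right vertical, the trivialisation of $i_1^*C_1(\chi) = \iota_1^*\Oc(-n)$ used to define $t$ agrees with the canonical trivialisation of $\iota_1^*\Oc(-n)$ of \rref{p:O_D} (both come from the vanishing of the last coordinate of $\iota_1\colon \Pp^0 \hookrightarrow \Pp^1$, which trivialises each $\Oc(d)$); consequently $t$ coincides with \eqref{eq:Sigma_n} for $-n$ in place of $n$, which is an isomorphism in $\SH(S)[\eta^{-1}]$ by \dref{prop:P_infty}{prop:P_infty:3}. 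The five lemma applied to the two distinguished triangles in $\SH(S)[\eta^{-1}]$ then forces $j_+$ to be an isomorphism.

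The step I expect to require the most care is the identification of the map $t$ of \eqref{diag:BH_BG} with the canonical map \eqref{eq:Sigma_n}: one needs to trace through the construction to verify that the trivialisation of $i_1^*C_1(\chi)$ arising from the right-hand cartesian square in \eqref{squ:e_i_j} agrees, under $C_1(\chi)\simeq \Oc(-n)$, with the trivialisation of $\iota_1^*\Oc(-n)$ used to define \eqref{eq:Sigma_n}. Once this compatibility is in hand, everything else is mechanical assembly of \rref{prop:P_infty} with the exact/cofiber sequences from \S 5.
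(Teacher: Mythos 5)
Your proposal is correct and follows essentially the same route as the paper: identify $\BB\Gm$ with $\Pp^\infty$ via the model of \rref{p:BGm}, apply \rref{prop:P_infty} for (i), feed the Kummer character $n\colon\Gm\to\Gm$ with $C_m(n)\simeq\Oc(-n)$ into the cofiber sequence \eqref{eq:BH_BG} for (ii), and compare the two rows of \eqref{diag:BH_BG} for (iii). The compatibility of $t$ with \eqref{eq:Sigma_n} that you single out as the delicate step is indeed the point the paper leaves implicit, and your verification of it is sound.
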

\begin{proof}
Let us consider the model for $\BB\Gm$ described in \rref{p:BGm}, where $B_m\Gm = \Pp^{m-1}$ and $\BB \Gm = \Pp^\infty$. Then the first statement follows from \dref{prop:P_infty}{prop:P_infty:1}.

Next consider the character $n \colon \Gm \to \Gm$ given by taking $n$-th powers. Its kernel is $\mu_n$, and the line bundle $C_m(n)$ over $B_m\Gm$ (defined in \eqref{def:C_m}) corresponds to the line bundle $\Oc(-n)$ over $\Pp^{m-1}$ (this may be seen for instance by combining \rref{p:BGm} with \eqref{eq:prod_C_m}). So we are in the situation of \rref{p:GH} with $G=\Gm,\chi=n,H=\mu_n$. Thus \eqref{eq:BH_BG} yields a distinguished triangle in $\SH(S)$
\[
\Sup \BB \mu_n \to \Sup \BB \Gm \to \Th_{\Pp^\infty}(\Oc(-n)) \to \Sigma^{1,0} \Sup \BB \mu_n.
\]

If $n$ is odd, then $\Th_{\Pp^\infty}(\Oc(-n))=0$ in $\SH(S)[\eta^{-1}]$ by \dref{prop:P_infty}{prop:P_infty:2}, and the above distinguished triangle shows that the morphism $\Sup \BB \mu_n \to \Sup \BB \Gm$ is an isomorphism in  $\SH(S)[\eta^{-1}]$. Thus the second statement follows from the first.

Assume that $n$ is even. Then in the diagram \eqref{diag:BH_BG} the maps $i_+$ and $t$ become isomorphisms in $\SH(S)[\eta^{-1}]$ by \dref{prop:P_infty}{prop:P_infty:1} and \dref{prop:P_infty}{prop:P_infty:3}, hence $\Sup j \colon \Sup \Gm \to \Sup \BB \mu_n$ is also one. As observed in \rref{p:GH}, the latter is induced by the $\mu_n$-torsor $n\colon \Gm \to \Gm$.
\end{proof}

\begin{remark}
Combining \rref{th:B_mu} with \rref{lemm:B_prod}, we have thus obtained a ``computation'' in $\SH(S)[\eta^{-1}]$ of the classifying space $\BB G$ of every finitely generated diagonalisable group $G$.
\end{remark}

\subsection{\texorpdfstring{$\SL$}{SL} versus \texorpdfstring{$\SL^c$}{SLc}}
Let $n \in \Nn \smallsetminus \{0\}$. Consider the character
\[
\nu_n \colon \GL_n \times \Gm \to \Gm \quad ; \quad (M,t) \mapsto t^{-2} \det M.
\]
By definition (see \cite[\S3]{PW-BO}), we have $\SL_n^c = \ker \nu_n$. We view $\SL_n$ as a subgroup of $\SL^c_n$ via the mapping $M \mapsto (M,1)$.

\begin{proposition}
\label{prop:SLc_SL}
For $n \in \Nn\smallsetminus \{0\}$ the inclusion $\SL_n \subset \SL_n^c$ induces an isomorphism 
\[
\Sup \BSL_n \xrightarrow{\sim} \Sup \BSL^c_n  \text{ in $\SH(S)[\eta^{-1}]$.}
\]
\end{proposition}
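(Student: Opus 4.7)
The plan is to compare the cofiber sequences arising from the two presentations $\SL_n = \ker(\det\colon \GL_n \to \Gm)$ and $\SL_n^c = \ker(\nu_n\colon \GL_n \times \Gm \to \Gm)$, exploiting that $\BB\Gm$ becomes trivial in $\SH(S)[\eta^{-1}]$ by \rref{th:B_mu}.

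Concretely, I would start from the commutative diagram of short exact sequences of algebraic groups
\[
\xymatrix{
1 \ar[r] & \SL_n \ar[d] \ar[r] & \GL_n \ar[d]^-{M \mapsto (M,1)} \ar[r]^-{\det} & \Gm \ar@{=}[d] \ar[r] & 1 \\
1 \ar[r] & \SL_n^c \ar[r] & \GL_n \times \Gm \ar[r]^-{\nu_n} & \Gm \ar[r] & 1
}
\]
whose right square commutes since $\nu_n(M,1) = \det M$. Applying the construction of \rref{p:GH} to compatible models of the classifying spaces would then produce a commutative diagram of distinguished triangles in $\SH(S)$:
\[
\xymatrix{
\Sup\BSL_n \ar[d] \ar[r] & \Sup\BGL_n \ar[d]^-{\beta} \ar[r] & \Th_{\BGL_n}(C(\det)) \ar[d]^-{\gamma} \\
\Sup\BSL_n^c \ar[r] & \Sup\BB(\GL_n \times \Gm) \ar[r] & \Th_{\BB(\GL_n \times \Gm)}(C(\nu_n)).
}
\]
By the five-lemma for distinguished triangles, it then suffices to show that $\beta$ and $\gamma$ become isomorphisms after inverting $\eta$.

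The map $\beta$ identifies via \rref{lemm:B_prod} with $\id_{\Sup\BGL_n} \wedge (\Un_S \to \Sup\BB\Gm)$, and the second factor is an isomorphism in $\SH(S)[\eta^{-1}]$ by \rref{th:B_mu}(i). For $\gamma$, the formula $\nu_n(M,t) = t^{-2}\det M$ combined with \rref{eq:prod_C_m} yields $C(\nu_n) \simeq C(\det) \otimes \Oc(2)$ on $\BGL_n \times \BB\Gm$, where $\Oc(1)$ is the tautological line bundle on the $\BB\Gm$-factor (using \rref{p:BGm} to identify $\BB\Gm \simeq \Pp^\infty$). Applying \rref{prop:twist_square} with $V = C(\det)$ of rank $1$, $L = \Oc(1)$ and $s = 2$ (so that $rs=2$ is even) gives an isomorphism $\Th(C(\det) \otimes \Oc(2)) \simeq \Th(C(\det))$ over $\BGL_n \times \BB\Gm$ in $\SH(S)[\eta^{-1}]$; combined with the basepoint isomorphism $\Th_{\BGL_n}(C(\det)) \simeq \Th_{\BGL_n \times \BB\Gm}(C(\det))$ (valid in $\SH(S)[\eta^{-1}]$ by the same K\"unneth-type argument as for $\beta$, since $C(\det)$ is pulled back from $\BGL_n$), this should handle $\gamma$.

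The main subtlety will lie in verifying that the chain of isomorphisms constructed for $\gamma$ genuinely identifies with the natural map induced by the inclusion of groups, and is not merely an abstract iso between the two sides; I would check this level-wise on the finite approximations $\Gr(n,nm) \times \Pp^{m-1}$ using the explicit formulas from the proof of \rref{prop:twist_square} (with the tautological trivialisation of $\Oc(1)$ over $\Oc(1)^\circ$), and then pass to the colimit in $m$ via hyperdescent as in \rref{p:hyperdescent}.
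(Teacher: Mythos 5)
Your overall architecture is reasonable and genuinely different in organisation from the paper's proof: the paper realises $\SL_n$ as the kernel of the character $\delta_n\colon \SL_n^c\to\Gm$, $(M,t)\mapsto t$, and shows directly that the resulting cofiber $\Th_{B_m\SL_n^c}(C_m(\delta_n))$ vanishes, whereas you map the extension defining $\SL_n$ to the one defining $\SL_n^c$ and argue by two-out-of-three. The diagram of extensions and the induced morphism of cofiber sequences are fine, and so is your treatment of $\beta$. The gap is in the treatment of $\gamma$. For the two-out-of-three argument you need the \emph{specific} morphism $\gamma$ induced by the inclusion of groups to be an isomorphism; the chain you assemble from \rref{prop:twist_square} and the K\"unneth identification only produces an \emph{abstract} isomorphism between the source and target of $\gamma$, which in a triangulated category proves nothing about $\gamma$ itself. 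You flag this, but the proposed repair --- checking ``level-wise using the explicit formulas from the proof of \rref{prop:twist_square}'' --- is not workable as stated: the isomorphism of \rref{prop:twist_square} is produced by descent along $L^\circ\to S$ from local trivialisations and has no evident relation to the map induced by $M\mapsto(M,1)$. (A secondary point: \rref{prop:twist_square} applies over a base scheme, so it must in any case be invoked on the finite approximations $B_m\GL_n\times\Pp^{m-1}$ before passing to the colimit.)

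The efficient way to close the gap is to recognise $\gamma$ itself as the first map of a cofiber sequence of type \eqref{eq:BH_BG_Th}. With $P_n=\GL_n\times\Gm$ and $q_n(M,t)=t$, one has $\GL_n=\ker q_n$, so at finite level the induced model $\widetilde B_m\GL_n=(E_mP_n)/\GL_n\to B_mP_n$ is the $\Gm$-torsor $C_m(q_n)^\circ$, and $C_m(\nu_n)$ restricts to $C_m(\det_n)$ on it by \eqref{eq:pb_C_m_closed}; hence the cofiber of $\gamma_m$ is $\Th_{B_mP_n}(C_m(q_n)\oplus C_m(\nu_n))$. Since $C_m(q_n)\oplus C_m(\nu_n)\simeq(\Oc(-1)\otimes q^*1)\oplus(\Oc(2)\otimes q^*C_m(\det_n))$ over the projective bundle $B_mP_n=\Pp(1^{\oplus m})\to B_m\GL_n$, with $(-1)\cdot 1+2\cdot 1=1$ odd, \rref{lemm:Th_odd} gives the vanishing for $m$ even, and a cofinality argument finishes the proof in the colimit. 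Note that this is precisely the computation at the heart of the paper's proof, so once the gap is filled your five-lemma packaging becomes an optional reorganisation rather than a shortcut.
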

\begin{proof}
The character
\[
\delta_n \colon \SL_n^c \to \Gm \quad ; \quad (M,t) \mapsto t
\]
is surjective (recall that $n \geq 1$), and satisfies $\SL_n = \ker \delta_n$.

Set $P_n=\GL_n \times \Gm$, and denote by $q_n\colon P_n \to \Gm$ the second projection. Let us fix an arbitrary model for $\BGL_n$, but choose the model for $\BB\Gm$ described in \rref{p:BGm}, so that $B_m \Gm=\Pp^{m-1}$. Recall from \rref{eq:isomo_B_prod} that this yields a model for $\BB P_n$, such that 
\[
B_mP_n = (B_m\GL_n) \times (B_m\Gm) = (B_m\GL_n) \times \Pp^{m-1}.
\]
Letting $g_n \colon P_n \to \GL_n$ be the first projection, we have, as characters $P_n \to \Gm$
\[
\nu_n = g_n^*({\det}_n) \cdot q_n^*(\id_{\Gm})^{-2},
\]
where $\det_n \colon \GL_n \to \Gm$ denotes the determinant morphism. It follows from \eqref{eq:prod_C_m} and \eqref{eq:pb_C_m_prod} that, as line bundles over $B_m P_n$, we have in the notation of \rref{def:C_m}
\[
C_m(\nu_n) \simeq C_m({\det}_{n}) \boxtimes C_m(\id_{\Gm})^{\otimes -2} \quad ; \quad C_m(q_n) \simeq 1 \boxtimes C_m(\id_{\Gm}).
\]
Recall from \rref{p:BGm} that the line bundle $C_m(\id_{\Gm}) \to B_m\Gm$ corresponds to $\Oc(-1) \to \Pp^{m-1}$. Applying \rref{lemm:Th_odd} to the projective bundle $B_mP_n \to B_m \GL_n$, for $m$ even we have 
\begin{equation}
\label{eq:Th_C_m}
\Th_{B_mP_n}(C_m(\nu_n) \oplus C_m(q_n))=0= \Th_{B_mP_n}(C_m(q_n)) \in \SH(S)[\eta^{-1}].
\end{equation}

Since the character $\delta_n$ is the restriction of $q_n \colon P_n \to \Gm$, it follows from \eqref{eq:pb_C_m_closed} that the line bundle $C_m(\delta_n)$ over $B_m\SL_n^c$ is the pullback of $C_m(q_n)$ over $B_mP_n$. By \rref{eq:BH_BG_Th}, we have a distinguished triangle in $\SH(S)$
\begin{align*}
\Th_{B_m\SL_n^c}(C_m(\delta_n)) \to \Th_{B_mP_n}(C_m(q_n)) \to& \Th_{B_mP_n}(C_m(\nu_n) \oplus C_m(q_n)) \\
&\quad \quad \to   \Sigma^{1,0}\Th_{B_m\SL_n^c}(C_m(\delta_n)).
\end{align*}
so that, in view of \eqref{eq:Th_C_m} 
\[
\Th_{B_m\SL_n^c}(C_m(\delta_n)) =0 \in \SH(S)[\eta^{-1}] \quad \text{for $m$ even}.
\]
Now the distinguished triangle in $\SH(S)$ (see \rref{eq:BH_BG_m})
\[
\Sup B_m\SL_n \to \Sup B_m\SL_n^c \to \Th_{B_m\SL_n^c}(C_m(\delta_n)) \to \Sigma^{1,0} \Sup B_m\SL_n
\]
implies that, for $m$ even, the natural map induces an isomorphism
\[
\Sup B_m\SL_n \xrightarrow{\sim} \Sup B_m\SL_n^c \quad \text{in $\SH(S)[\eta^{-1}]$}.
\]
The statement follows by taking the homotopy colimit.
\end{proof}

\begin{remark}
\label{p:Ana_BSL}
Let $A \in \SH(S)$ be an $\eta$-periodic commutative ring spectrum, and consider the corresponding cohomology theory $\Aa(-)$ (see \rref{p:coh}). Then by \rref{prop:SLc_SL}, we have a natural isomorphism
\[
\Aa(\BSL_n^c) \simeq \Aa(\BSL_n).
\]
If $A$ is $\SL$-oriented  (see \rref{def:SL-orientation}) and $S=\Spec k$ with $k$ a field of characteristic not two, Ananyevskiy computed in \cite[Theorem~10]{Ananyevskiy-SL_PB} that
\[
\Aa(\BSL_n) = 
\begin{cases}
\Aa(S)[[p_1,\ldots,p_{r-1},e]]_h & \text{if $n=2r$ with $r\in\Nn \smallsetminus \{0\}$}\\
\Aa(S)[[p_1,\ldots,p_r]]_h & \text{if $n=2r+1$ with $r\in \Nn$},
\end{cases}
\]
where $p_i$ has degree $(4i,2i)$ and $e$ has degree $(2r,r)$ (here the notation $R[[x_1,\ldots,x_m]]_h$ refers to the homogeneous power series ring in $m$ variables over the graded ring $R$, see \cite[Definition~27]{Ananyevskiy-SL_PB}). This computation remains valid (with exactly the same arguments) when $S$ is an arbitrary noetherian scheme of finite dimension, under the assumption that $2$ is invertible in $S$. Removing that last assumption seems to require a modification of the arguments of \cite{Ananyevskiy-SL_PB}, the problem being with \cite[Lemma~6]{Ananyevskiy-SL_PB} (which is used to prove \cite[Theorem~9]{Ananyevskiy-SL_PB}).
\end{remark}

\subsection{\texorpdfstring{$\GL$}{GL} and \texorpdfstring{$\SL$}{SL}}
In this section, we compare the classifying spaces $\BGL_{2r}$, $\BGL_{2r+1}$, $\BSL_{2r}$, $\BSL_{2r+1}$.

\begin{para}
\label{p:BGL_n_1}
Recall that under the model for $\BGL_n$ described in \rref{p:Totaro_model} for $p=n$, the scheme $B_m \GL_n$ is identified with the grassmannian $\Gr(n,nm)$. The closed immersion $\Gr(n,nm) \to \Gr(n+1,(n+1)m)$ mapping a subbundle $E \subset (1^{\oplus m})^{\oplus n}$ to $E \oplus 1 \subset (1^{\oplus m})^{\oplus n} \oplus 1^{\oplus m} = (1^{\oplus m})^{\oplus n+1}$, where the inclusion $1 \subset 1^{\oplus m}$ is given by the vanishing of the last $m-1$ coordinates, induces a morphism $f_m \colon B_m \GL_n \to B_m\GL_{n+1}$ which is compatible with the transition maps as $m$ varies. This yields a morphism in $\Spc(S)$
\begin{equation}
\label{eq:BGL_n_1}
\BGL_n \to \BGL_{n+1}.
\end{equation}
\end{para}

\begin{para}
\label{p:p_q}
For integers $u,v,w \in \Nn$, we denote by $\Gr(u\subset v,w)$ the flag variety of subbundles $P \subset Q \subset 1^{\oplus w}$ with $\rank P=u$ and $\rank Q=v$. Let $r,s \in \Nn$, and consider the morphisms
\[
\Gr(2r,s) \xleftarrow{p} \Gr(2r\subset 2r+1,s) \xrightarrow{q} \Gr(2r+1,s)
\]
given by mapping a flag $P \subset Q$ to $P$, resp.\ $Q$.

For $n \in \{2r,2r+1\}$, let us denote by $\Uni_n \subset 1^{\oplus s}$ the tautological rank $n$ subbundle over $\Gr(n,s)$, and write $\Quo_n = 1^{\oplus s}/\Uni_n$. Then the morphism $p$ may be identified with the projective bundle $\Pp(\Quo_{2r})$, and the morphism $q$ is the projective bundle $\Pp(\Uni_{2r+1}^\vee)$.
\end{para}

\begin{proposition}
\label{prop:BGL_ev_odd}
The map $\BGL_{2r} \to \BGL_{2r+1}$ of \eqref{eq:BGL_n_1} becomes an isomorphism in $\SH(S)[\eta^{-1}]$.
\end{proposition}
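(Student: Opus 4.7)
The plan is to factor the map $\BGL_{2r}\to\BGL_{2r+1}$ through the incidence flag varieties of \rref{p:p_q}, whose two natural projections both become isomorphisms in $\SH(S)[\eta^{-1}]$ for a cofinal choice of the ambient dimension $N$.

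First I would represent $\BGL_n$ for $n\in\{2r,2r+1\}$ as $\hocolim_N\Gr(n,N)$ (an admissible model equivalent to that of \rref{p:Totaro_model}) with transition maps coming from a fixed filtration $1^{\oplus N}\subset 1^{\oplus N+1}$, and arrange for the canonical morphism $\BGL_{2r}\to\BGL_{2r+1}$ to be represented by $\phi_N\colon\Gr(2r,N-1)\to\Gr(2r+1,N)$, $E\mapsto E\oplus L_N$, where $L_N\subset 1^{\oplus N}$ is the complementary coordinate line. I would then introduce the flag varieties $F_N=\Gr(2r\subset 2r+1,N)$ with their projections $p_N\colon F_N\to\Gr(2r,N)$ and $q_N\colon F_N\to\Gr(2r+1,N)$, together with the canonical lift $s_N\colon\Gr(2r,N-1)\to F_N$, $E\mapsto(E\subset E\oplus L_N)$; a direct computation yields $q_N\circ s_N=\phi_{N-1}$ and $p_N\circ s_N=\iota$, with $\iota\colon\Gr(2r,N-1)\hookrightarrow\Gr(2r,N)$ the transition of the $\BGL_{2r}$-system.

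By \rref{p:p_q}, $q_N$ is the projective bundle associated with $\Uni_{2r+1}^\vee$ of odd rank $2r+1$, so \rref{cor:PE_odd} (transported from $S$ to $\Gr(2r+1,N)$ via the base-change formalism of \rref{p:sharp}) shows that $q_N$ induces an isomorphism in $\SH(S)[\eta^{-1}]$. Likewise $p_N$ is the projective bundle associated with $\Quo_{2r}$ of rank $N-2r$, hence also an isomorphism in $\SH(S)[\eta^{-1}]$ as soon as $N$ is odd. Restricting to the cofinal subsystem of odd $N$ and passing to homotopy colimits, both $\hocolim_N p_N$ and $\hocolim_N q_N$ become isomorphisms in $\SH(S)[\eta^{-1}]$; because the transitions $\iota$ compose to the identity of $\BGL_{2r}$, the relation $p_N\circ s_N=\iota$ forces $\hocolim_N s_N$ to be the inverse of $\hocolim_N p_N$, so that $\hocolim_N\phi_{N-1}=(\hocolim_N q_N)\circ(\hocolim_N s_N)$ is an isomorphism, which is the claim.

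The hard part will be ensuring that the levelwise morphisms $\phi_N$, $s_N$ and the transitions actually assemble into a coherent diagram realising the canonical map $\BGL_{2r}\to\BGL_{2r+1}$ after homotopy colimit: the naive squares do not strictly commute, since the complementary lines $L_N$ and $L_{N+1}$ sit in different positions of the ambient trivial bundle. To handle this I would either modify the model carefully (absorbing the shift into a cofinal reparameterisation), or exhibit natural weak equivalences of the relevant model-diagrams that relate the two choices, so that the final identification of $\hocolim_N\phi_{N-1}$ with the map of the statement is unambiguous.
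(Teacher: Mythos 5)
Your proposal follows essentially the same route as the paper: factor the map through the flag variety $\Gr(2r\subset 2r+1,N)$, observe that both projections are projective bundles associated with odd rank vector bundles for a cofinal set of $N$ (all $N$ for $q_N$, odd $N$ for $p_N$), and apply \rref{cor:PE_odd}. The coherence issue you rightly flag at the end is exactly what the paper resolves by working with the model $B_m\GL_n=\Gr(n,nm)$ of \rref{p:Totaro_model}, in which the lift $E\mapsto(E\subset E\oplus 1)$ and the two projections strictly commute with the transition maps, and in which the analogue of your composite $p_N\circ s_N$ is recognised as a weak equivalence after passing to the colimit via the model-independence statement of \rref{p:def_BG}.
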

\begin{proof}
Let $n=2r$. For $m\in \Nn \smallsetminus \{0\}$, consider the commutative diagram in $\Sm_S$
\[ \xymatrix{
&\Gr(n,nm)\ar[ld]_{g_m}\ar[rd]^{f_m} \ar[d]^{j_m} &  \\ 
\Gr(n,(n+1)m) &\Gr(n \subset n+1,(n+1)m)\ar[l]^-{p_m}\ar[r]_-{q_m} & \Gr(n+1,(n+1)m)
}\]
where the morphism $j_m$ is given by mapping $E \subset (1^{\oplus m})^{\oplus n}$ to
\[
E \subset E \oplus 1 \subset (1^{\oplus m})^{\oplus n} \oplus 1^{\oplus m} = (1^{\oplus m})^{\oplus n+1},
\]
with the inclusion $1 \subset 1^{\oplus m}$ given by the vanishing of the $m-1$ last coordinates. Here the morphisms $p_m,q_m$ are the morphisms $p,q$ described in \rref{p:p_q} when $s=(n+1)m$. The morphism $f_m$ is the one described in \rref{p:BGL_n_1}, and the morphism $g_m$ is induced by the inclusion
\begin{equation}
\label{eq:inc_m_n}
(1^{\oplus m})^{\oplus n} =(1^{\oplus m})^{\oplus n} \oplus 0 \subset (1^{\oplus m})^{\oplus n} \oplus 1^{\oplus m} = (1^{\oplus m})^{\oplus n+1}.
\end{equation}
The morphisms of this diagram are compatible with the transition maps as $m$ varies, induced by the inclusions $1^{\oplus m} \subset 1^{\oplus m+1}$ given by the vanishing of the last coordinate.

The morphism $q_m$ is a $\Pp^n$-bundle, hence is an isomorphism in $\SH(S)[\eta^{-1}]$ by \rref{cor:PE_odd} (recall that $n=2r$ is even). The morphism $p_m$ is a $\Pp^{(n+1)m-n-1}$-bundle, hence is also an isomorphism in $\SH(S)[\eta^{-1}]$ when $m$ is odd by \rref{cor:PE_odd}.

In the notation of \rref{p:Totaro_model} the morphism $g_m$ is the $\GL_n$-quotient of the morphism $U_{m,n} \to U_{m,n+1}$ induced by \eqref{eq:inc_m_n}. Therefore it follows from \rref{p:def_BG} that the map $\colim_m g_m$ is a weak equivalence of motivic spaces. 

Applying the functor $\Sup \colon \Sm_S \to \Spt(S)[\eta^{-1}]$ to the above diagram, and taking the homotopy colimit over $m$, we thus obtain a commutative diagram in $\Spt(S)[\eta^{-1}]$, where all maps are weak equivalences. Since the map \eqref{eq:BGL_n_1} is obtained as $\colim_m f_m$, the proposition follows. 
\end{proof}

\begin{para}
Let $n \in \Nn \smallsetminus \{0\}$. The group $\SL_n$ is the kernel of the determinant morphism $\det_n \colon \GL_n \to \Gm$, which is surjective (as $n\geq 1$). We are thus in the situation of \rref{p:GH}, so that we have by \eqref{eq:splitting_BG_BH}, a splitting
\begin{equation}
\label{eq:BSL_BSL_split}
\Sup \BSL_n = \Sup \BGL_n \oplus \Sigma^{-2,-1}\Th_{\BGL_n}(C({\det}_n)) \in \SH(S)[\eta^{-1}].
\end{equation}
\end{para}

\begin{para}
Using the model for $\BGL_n$ described in \rref{p:Totaro_model} with $p=n$, the variety $B_m \GL_n$ coincides with the grassmannian $\Gr(n,nm)$. Observe that the tautological bundle $\Uni_n$ over this variety is isomorphic to the quotient $(E_m\GL_n \times \Ab^n)/\GL_n$, where the right $\GL_n$-action on $\Ab^n$ is given by letting $\varphi \in \GL_n$ act via $v \mapsto \varphi^{-1}(v)$. The $\GL_n$-equivariant isomorphism $\det(E_m\GL_n \times \Ab^n) \simeq E_m\GL_n \times \Ab^1$, where the right $\GL_n$-action on $\Ab^1$ is given by letting $\varphi \in \GL_n$ act via $\lambda \mapsto \det_n(\varphi^{-1}) \lambda$, yields an isomorphism of line bundles over $B_m\GL_n$
\begin{equation}
\label{eq:det_Uni}
\det \Uni_n \simeq C_m({\det}_n).
\end{equation}
\end{para}

\begin{proposition}
\label{prop:BSL_BGL_odd}
Let $r \in \Nn$. Then in $\SH(S)[\eta^{-1}]$, the natural morphism $\BSL_{2r} \to \BGL_{2r}$ acquires a section, and the natural morphism $\BSL_{2r+1} \to \BGL_{2r+1}$ becomes an isomorphism.
\end{proposition}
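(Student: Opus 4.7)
The plan is to extract both halves of the proposition from the splitting~\eqref{eq:BSL_BSL_split}. Tracing its construction in~\eqref{eq:splitting_BG_BH} back through \rref{cor:L_circ}, the natural morphism $\Sup\BSL_n \to \Sup\BGL_n$ is the projection onto the first summand of
\[
\Sup\BSL_n \simeq \Sup\BGL_n \vo \Sigma^{-2,-1}\Th_{\BGL_n}(C({\det}_n)),
\]
so it automatically admits a section for every $n$, which settles the case $n=2r$. For $n=2r+1$ it reduces the assertion to showing that the second summand vanishes, i.e., that $\Th_{\BGL_{2r+1}}(C({\det}_{2r+1})) = 0$ in $\SH(S)[\eta^{-1}]$.

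To produce this vanishing I mimic the strategy of \rref{prop:SLc_SL}. Set $n=2r+1$, let $G = \GL_n \times \Gm$, and use the model for $\BB\Gm$ of \rref{p:BGm}, so that $B_mG = B_m\GL_n \times \Pp^{m-1}$ by~\eqref{eq:isomo_B_prod}. Consider the character
\[
\chi \colon G \to \Gm, \qquad (M,t) \mapsto (\det M) \cdot t^{-n}.
\]
By~\eqref{eq:det_Uni},~\eqref{eq:prod_C_m} and~\eqref{eq:pb_C_m_prod}, the line bundle $C_m(\chi)$ on $B_mG$ is the external tensor product $\det\Uni_n \boxtimes \Oc(n)$. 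Viewing the projection $B_mG \to B_m\GL_n$ as the trivial projective bundle $\Pp_{B_m\GL_n}(1^{\oplus m})$, I then apply \rref{lemm:Th_odd} over this base (as already done in the proof of \rref{prop:SLc_SL}) with $E = 1^{\oplus m}$, $V_1 = \det\Uni_n$ and $d_1 = n$: for even $m$ the rank of $E$ is even and $d_1 \rank V_1 = n$ is odd, whence $\Th_{B_mG}(C_m(\chi)) = 0$ in $\SH(S)[\eta^{-1}]$. Passing to the homotopy colimit over even $m$ (cofinal in $\Nn\smallsetminus\{0\}$) yields $\Th_{\BB G}(C(\chi)) = 0$, and the cofiber sequence~\eqref{eq:BH_BG} then shows that $\BB\ker\chi \to \BB G$ becomes an isomorphism in $\SH(S)[\eta^{-1}]$.

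Finally I need to transfer this isomorphism to the natural map on classifying spaces. A direct calculation gives $\ker\chi = \{(M,t) : \det M = t^n\}$, and $\phi\colon \SL_n \times \Gm \to \ker\chi$, $(N,t)\mapsto (tN,t)$, is an isomorphism of algebraic groups. Crucially, $\phi$ is the restriction of the automorphism $\alpha$ of $G$ defined by $\alpha(M,t) = (tM,t)$, so the square
\[
\xymatrix{
\SL_n \times \Gm \ar@{^{(}->}[r] \ar[d]_{\phi}^{\simeq} & G \ar[d]^{\alpha}_{\simeq} \\
\ker\chi \ar@{^{(}->}[r] & G
}
\]
commutes. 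After applying $\Sup\BB$ the vertical maps are isomorphisms of motivic spaces (as $\phi,\alpha$ are group isomorphisms) and the bottom arrow is an isomorphism in $\SH(S)[\eta^{-1}]$, so the top arrow is one as well. By \rref{lemm:B_prod} and $\Sup(X \times Y) = \Sup X \wedge \Sup Y$, this top arrow identifies with $(\Sup\BSL_n \to \Sup\BGL_n) \wedge \id_{\Sup\BB\Gm}$, and smashing with $\Sup\BB\Gm$ is invertible in $\SH(S)[\eta^{-1}]$ by \rref{th:B_mu}. The delicate step is precisely this last identification: the Thom-space computation a priori only provides an isomorphism from \emph{some} map $\BB\ker\chi \to \BB G$, and the extension $\alpha$ of $\phi$ to a group automorphism of $G$ is what allows me to identify the resulting isomorphism with the natural morphism $\BSL_n \to \BGL_n$.
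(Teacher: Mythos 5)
Your treatment of the first statement and your overall strategy for the second (reduce to a Thom-space vanishing via the splitting \eqref{eq:BSL_BSL_split}) agree with the paper, but the vanishing argument itself is genuinely different. The paper never leaves $\GL_{2r+1}$: it pulls $C_m(\det_{2r+1})$ back along the $\Pp^{2r}$-bundle $q\colon \Gr(2r\subset 2r+1,s)\to\Gr(2r+1,s)$, identifies the pullback with $p^*(\det\Uni_{2r})\otimes\Oc(-1)$ via the tautological exact sequence, kills this by \rref{lemm:Th_odd} applied to the projective bundle $p=\Pp(\Quo_{2r})$ (where $\Quo_{2r}$ has even rank for $m$ even), and then descends along the odd-dimensional bundle $q$ using \rref{prop:PE_odd}. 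Your computation that $\Th_{B_mG}(C_m(\chi))=0$ for $G=\GL_n\times\Gm$ and $\chi(M,t)=(\det M)t^{-n}$ is correct and in some ways cleaner, since the relevant projective bundle $B_mG\to B_m\GL_n$ is trivial; the price is that $\ker\chi$ is only isomorphic to $\SL_n\times\Gm$, not equal to it as a subgroup of $G$.

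That price is where your argument has a real gap, which you correctly identify but do not close. The paper's framework (\rref{p:def_BG}, \rref{p:B_closed}) provides the map $\BB H\to\BB G$ only for a closed subgroup $H\subset G$ relative to a fixed model, and model-independence only for a single group; it supplies neither a map $\BB\alpha$ for a group automorphism $\alpha$ of $G$, nor the compatibility of the comparison equivalences between two models of $\BB H$ with the respective maps to $\BB G$. So the assertion that applying $\Sup\BB$ to your commuting square of groups yields a commuting square of isomorphisms is not yet justified, and note that the naive attempt fails: the identity of $E_mG$ is \emph{not} equivariant between the given $(\SL_n\times\Gm)$-action and the one transported through $\phi$. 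The step can be repaired as follows: twist the chosen admissible gadget for $G$ by $\alpha$, letting $g$ act by $u\mapsto u\cdot\alpha(g)$. This is again an admissible gadget with a nice $G$-action whose $G$-quotients are literally the schemes $B_mG$ and whose $(\SL_n\times\Gm)$-quotients are literally $B_m(\ker\chi)$, so your isomorphism becomes a statement about this particular model of $\BB(\SL_n\times\Gm)\to\BB(\GL_n\times\Gm)$; one then compares it with the product model of \rref{p:Totaro_model} and \eqref{eq:isomo_B_prod} by forming the product of the two gadgets with the diagonal $G$-action, whose projections are equivariant for both $G$ and $\SL_n\times\Gm$ and hence induce compatible weak equivalences on both rows. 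Either carry this out explicitly or adopt the paper's flag-variety argument, which is engineered precisely to avoid any appeal to functoriality of $\BB$ beyond closed subgroup inclusions.
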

\begin{proof}
The first statement follows from \rref{eq:BSL_BSL_split}. Let us prove the second. We use the model for $\BGL_{2r+1}$ described in \rref{p:Totaro_model} with $n=p=2r+1$, so that $B_m \GL_{2r+1} = \Gr(2r+1,s)$ where $s=(2r+1)m$. We consider the situation of \rref{p:p_q}, and use the notation thereof. We have an exact sequence of vector bundles over $Y_m=\Gr(2r \subset 2r+1,s)=\Pp(\Quo_{2r})$
\[
0 \to p^*\Uni_{2r} \to q^*\Uni_{2r+1} \to \Oc_{\Pp(\Quo_{2r})}(-1) \to 0,
\]
Taking determinants and using \eqref{eq:det_Uni} we obtain an isomorphism of line bundles
\begin{equation}
\label{eq:det_tensor}
p^*(\det \Uni_{2r}) \otimes \Oc_{\Pp(\Quo_{2r})}(-1) \simeq q^*(\det \Uni_{2r+1}) \simeq q^*C_m({\det}_{2r+1}).
\end{equation}
When $m$ is even, the vector bundle $\Quo_{2r}=1^{\oplus s}/\Uni_{2r}$ has even rank, hence it follows from \rref{lemm:Th_odd} and \eqref{eq:det_tensor} that $\Th_{Y_m}(q^*C_m(\det_{2r+1}))=0$ in $\SH(S)[\eta^{-1}]$. Since $q \colon Y_m \to \Gr(2r+1,s)$ is a $\Pp^{2r}$-bundle, it then follows from \rref{prop:PE_odd} (applied with $V=C_m(\det_{2r+1})$) that $\Th_{\Gr(2r+1,s)}(C_m(\det_{2r+1}))=0$ in $\SH(S)[\eta^{-1}]$ for $m$ even. Taking the homotopy colimit over $m$, we deduce that $\Th_{\BGL_{2r+1}}(C(\det_{2r+1}))=0$ in $\SH(S)[\eta^{-1}]$, and the second statement of the proposition follows from \eqref{eq:BSL_BSL_split}.
\end{proof}

\begin{remark}
Let $A \in \SH(S)$ be an $\eta$-periodic $\SL$-oriented commutative ring spectrum (see \rref{def:SL-orientation}), and consider the corresponding cohomology theory $\Aa(-)$ (see \rref{p:coh}). Assume that $2$ is invertible in $S$. Combining \rref{prop:BSL_BGL_odd} and \rref{prop:BGL_ev_odd} with Ananyevskiy's computation of $\Aa(\BSL_{2r+1})$ (see \rref{p:Ana_BSL}), we recover Levine's computation \cite[Theorem~4.1]{Levine-motivic_Euler}
\[
\Aa(\BGL_{2r}) = \Aa(\BGL_{2r+1}) = \Aa(S)[[p_1,\ldots,p_r]]_h.
\]
(Note that this permits to remove some of the technical assumptions present in the statement of \cite[Theorem~4.1]{Levine-motivic_Euler}.)
\end{remark}

\appendix
\section{An invariant of \texorpdfstring{$\mu_2$}{\textmu 2}-torsors}
\numberwithin{theorem}{section}
\numberwithin{lemma}{section}
\numberwithin{proposition}{section}
\numberwithin{corollary}{section}
\numberwithin{example}{section}
\numberwithin{definition}{section}
\numberwithin{remark}{section}

We have proved in \rref{th:B_mu} that in $\SH(S)[\eta^{-1}]$, for $n\in \Nn$
\[
\Sup \BB \mu_n =
\begin{cases}
\Un_S & \text{if $n=0$ (i.e. $\mu_n = \Gm$) or $n$ is odd,}\\
\Sup \Gm = \Un_S \vo \Un_S & \text{if $n>0$ is even.}
\end{cases}
\]
Thus, when $n>0$ is even, there is essentially one nontrivial invariant of $\mu_n$-torsors over $S$ in $\SH(S)[\eta^{-1}]$, in the form of an element of $\End_{\SH(S)[\eta^{-1}]}(\Un_S)$. Moreover, it also follows from \rref{th:B_mu} that the morphism $\BB \mu_2 \to \BB \mu_{2r}$ becomes an isomorphism in $\SH(S)[\eta^{-1}]$ for $r>0$. Therefore the above-mentioned invariant of $\mu_n$-torsors is induced by an invariant of $\mu_2$-torsors, which is however not really explicit from this description. In this section we provide an explicit construction of this invariant (the connection with the above discussion is made in \rref{rem:coh} below).  

\begin{para}
\label{p:Thom_dual}
Let $L\to S$ be a line bundle. As observed by Ananyevskiy \cite[Lemma~4.1]{Ana-SL}, the isomorphism of $S$-schemes $L^\circ \xrightarrow{\sim} (L^\vee)^\circ$, given locally by $l \mapsto l^\vee$ where $l^\vee(l)=1$, induces an isomorphism in $\Hop(S)$
\[
\sigma_L \colon \Th_S(L) \xrightarrow{\sim} \Th_S(L^\vee).
\]
\end{para}

\begin{para}
If $\varphi \colon L \to M$ is an isomorphism of line bundles over $X$, we have (see \rref{p:Thom_funct})
\begin{equation}
\label{eq:T_sigma}
\sigma_L = \Th(\varphi^\vee) \circ \sigma_M \circ \Th(\varphi).
\end{equation}
\end{para}

\begin{definition}
\label{p:def_mu_2_torsor}
It will be convenient to think of a $\mu_2$-torsor over $S$ as a pair $(L,\lambda)$, where $L\to S$ is a line bundle, and $\lambda \colon L \xrightarrow{\sim} L^\vee$ is an isomorphism of line bundles over $S$. Isomorphisms $(L,\lambda) \to (L',\lambda')$ are given by isomorphisms of line bundles $\varphi \colon L \xrightarrow{\sim} L'$ such that $\lambda = \varphi^\vee \circ \lambda' \circ \varphi$. The set of isomorphism classes of $\mu_2$-torsors is denoted $H^1_{et}(S,\mu_2)$; it is endowed with a group structure induced by the tensor product of line bundles.
\end{definition}

\begin{definition}
\label{def:alpha}
Consider a $\mu_2$-torsor, given by a line bundle $L\to S$ and an isomorphism $\lambda \colon L\xrightarrow{\sim} L^\vee$. Let us consider the composite isomorphism in $\SH(S)$ (see \rref{p:Thom_dual} for the definition of $\sigma_L$, and \rref{p:Thom_funct} for that of $\Th(\lambda)$)
\[
\Th_S(L) \xrightarrow{\Th(\lambda)} \Th_S(L^\vee) \xrightarrow{\sigma_L^{-1}} \Th_S(L).
\]
This yields an element $a_{(L,\lambda)} = \Sigma^{-L}(\sigma_L^{-1} \circ \Th(\lambda)) \in \Aut_{\SH(S)}(\mathbf{1}_S)$. We define
\[
\alpha(L,\lambda) = (a_{(1,\can)})^{-1} \circ a_{(L,\lambda)} \in \Aut_{\SH(S)}(\Un_S),
\]
where $\can \colon 1 \to 1^\vee$ is the canonical isomorphism of line bundles over $S$. (The element $a_{(1,\can)}$ corresponds to the element $\epsilon$ of \cite[\S6.1]{Morel-Intro_A1}.)
\end{definition}

\begin{para}
\label{p:funct_alpha}
This construction is compatible with pullbacks, in the sense that if $f\colon R \to S$ is a morphism of noetherian schemes of finite dimension, and $(L,\lambda)$ a $\mu_2$-torsor, then the composite in $\SH(R)$
\[
\Un_R = f^*\Un_S \xrightarrow{f^*\alpha(L,\lambda)} f^*\Un_S =\Un_R
\]
is $\alpha(f^*L,f^*\lambda)$.
\end{para}

\begin{example}
\label{ex:L_trivial}
Consider a $\mu_2$-torsor $(L,\lambda)$ where $L=1$ is the trivial line bundle. Then $\lambda = u \can$ for some $u \in H^0(S,\Gm)$, hence $\Th(\lambda) = \Th(\can) \circ \Th(u\id_1)$. Therefore (see \rref{p:langle_rangle})
\begin{align*}
\alpha(1,\lambda) 
&=  \Sigma^{-2,-1}((\sigma_1^{-1} \circ \Th(\can))^{-1} \circ (\sigma_1^{-1} \circ \Th(\lambda))  \\ 
&= \Sigma^{-2,-1}(\Th(\can)^{-1} \circ \Th(\lambda))\\
&= \Sigma^{-2,-1} \Th(u\id_1)\\
&= \langle u\rangle.
\end{align*}
\end{example}

\begin{proposition}
\label{prop:mu_2_isom}
The assignment $(L,\lambda) \mapsto \alpha(L,\lambda)$ induces a morphism of pointed sets
\[
\alpha \colon H^1_{et}(S,\mu_2) \to \Aut_{\SH(S)}(\Un_S).
\]
\end{proposition}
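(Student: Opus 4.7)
The plan is to verify two things: that $\alpha(L,\lambda)$ depends only on the isomorphism class of the $\mu_2$-torsor $(L,\lambda)$, and that the trivial torsor $(1,\can)$ is sent to $\id_{\Un_S}$. The latter is immediate from the definition, since $\alpha(1,\can) = (a_{(1,\can)})^{-1} \circ a_{(1,\can)} = \id$.

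For invariance under isomorphism, I would fix an isomorphism of $\mu_2$-torsors $\varphi \colon (L,\lambda) \xrightarrow{\sim} (L',\lambda')$, which by \rref{p:def_mu_2_torsor} is an isomorphism of line bundles satisfying $\lambda = \varphi^\vee \circ \lambda' \circ \varphi$. The task reduces to showing $a_{(L,\lambda)} = a_{(L',\lambda')}$. First I would compute, using \eqref{eq:T_mult} and the identity \eqref{eq:T_sigma}, that
\[
\sigma_L^{-1} \circ \Th(\lambda) = \bigl(\Th(\varphi^\vee) \circ \sigma_{L'} \circ \Th(\varphi)\bigr)^{-1} \circ \Th(\varphi^\vee) \circ \Th(\lambda') \circ \Th(\varphi) = \Th(\varphi)^{-1} \circ \bigl(\sigma_{L'}^{-1} \circ \Th(\lambda')\bigr) \circ \Th(\varphi).
\]
Then I would invoke naturality \eqref{eq:Sigma_T} applied to the isomorphism $\varphi\colon L \xrightarrow{\sim} L'$ and the endomorphism $a_{(L',\lambda')} \in \Aut_{\SH(S)}(\Un_S)$, which gives
\[
\Th(\varphi)^{-1} \circ \bigl(\Sigma^{L'} a_{(L',\lambda')}\bigr) \circ \Th(\varphi) = \Sigma^L a_{(L',\lambda')}.
\]
Combining these two displays with the definition $\Sigma^{L'} a_{(L',\lambda')} = \sigma_{L'}^{-1} \circ \Th(\lambda')$ yields $\Sigma^L a_{(L,\lambda)} = \Sigma^L a_{(L',\lambda')}$, and since $\Sigma^L$ is an equivalence of categories, $a_{(L,\lambda)} = a_{(L',\lambda')}$. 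Multiplying both sides on the left by $a_{(1,\can)}^{-1}$ gives the equality $\alpha(L,\lambda) = \alpha(L',\lambda')$.

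There is no real obstacle here; the argument is purely formal, and the only thing to watch is that the formulas in \rref{p:Thom_funct} and \rref{p:Thom_dual} are applied with the correct variances (in particular that $\lambda = \varphi^\vee \circ \lambda' \circ \varphi$ and not some other composite, so that the middle factors $\Th(\varphi^\vee)$ and $\Th(\varphi^\vee)^{-1}$ cancel cleanly). I note that the proposition only asserts a morphism of pointed sets, so no multiplicativity with respect to the group structure on $H^1_{\mathrm{et}}(S,\mu_2)$ needs to be checked at this stage.
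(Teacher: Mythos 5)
Your proposal is correct and follows essentially the same route as the paper's proof: both establish $\alpha(1,\can)=\id$ by construction and then verify invariance under isomorphism by combining \eqref{eq:T_mult} and \eqref{eq:T_sigma} to conjugate $\sigma_{L'}^{-1}\circ\Th(\lambda')$ by $\Th(\varphi)$, and finally use \eqref{eq:Sigma_T} to cancel the conjugation. The only cosmetic difference is that the paper carries out the chain of equalities in a single display rather than in two separate steps.
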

\begin{proof}
By construction we have $\alpha(1,\can)=\id$. Consider an isomorphism of $\mu_2$-torsors $(L,\lambda) \xrightarrow{\sim} (M,\mu)$ (see \rref{p:def_mu_2_torsor}), given by an isomorphism $\varphi \colon L \xrightarrow{\sim} M$. Let us set $a_L = a_{(L,\lambda)}$ and $a_M = a_{(M,\mu)}$ (see \rref{def:alpha}). Then, in $\Aut_{\SH(S)}(\Th_S(L))$,
\begin{align*}
\Sigma^La_L = \sigma_L^{-1} \circ \Th(\lambda)
&= \sigma_L^{-1} \circ \Th(\varphi^\vee) \circ \Th(\mu) \circ \Th(\varphi) && \text{(as $\lambda = \varphi^\vee \circ \mu \circ \varphi$)} \\ 
&= \Th(\varphi)^{-1} \circ \sigma_M^{-1} \circ \Th(\mu) \circ \Th(\varphi) && \text{by \eqref{eq:T_sigma}}\\
&= \Th(\varphi)^{-1} \circ (\Sigma^Ma_M) \circ \Th(\varphi) \\
&=  \Th(\varphi)^{-1} \circ \Th(\varphi) \circ (\Sigma^L a_M) && \text{(by \eqref{eq:Sigma_T})}\\
&= \Sigma^La_M,
\end{align*}
whence $a_L = a_M$, and $\alpha(L,\lambda) = \alpha(M,\mu)$.
\end{proof}

\begin{proposition}
The assignment $(L,\lambda) \mapsto \alpha(L,\lambda)$ induces a group morphism
\[
\alpha \colon H^1_{et}(S,\mu_2) \to \Aut_{\SH(S)[\eta^{-1}]}(\Un_S).
\]
\end{proposition}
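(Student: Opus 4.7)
The plan is to reduce to the case of the trivial line bundle by pulling back along the faithful functor of \rref{cor:L_circ_faithful}, which is available precisely after inverting $\eta$.

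Since $\alpha(1,\can) = \id$ by construction, it suffices to verify multiplicativity. Given two $\mu_2$-torsors $(L,\lambda)$ and $(M,\mu)$ over $S$, I would consider the composite $r = p \circ q \colon (p^*M)^\circ \to L^\circ \to S$, where $p \colon L^\circ \to S$ and $q \colon (p^*M)^\circ \to L^\circ$ are the natural projections. By two applications of \rref{cor:L_circ_faithful}, the functor $r^* \colon \SH(S)[\eta^{-1}] \to \SH((p^*M)^\circ)[\eta^{-1}]$ is faithful, so in view of the functoriality of $\alpha$ (\rref{p:funct_alpha}), it suffices to prove
\[
\alpha(r^*(L \otimes M), r^*(\lambda \otimes \mu)) = \alpha(r^*L, r^*\lambda) \circ \alpha(r^*M, r^*\mu).
\]
Over $(p^*M)^\circ$, both $r^*L$ and $r^*M$ are trivialisable: one uses the pullback along $q$ of the tautological trivialisation of $p^*L$ over $L^\circ$ for the former, and the tautological trivialisation of $q^*(p^*M)$ over $(p^*M)^\circ$ for the latter.

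Using these trivialisations, together with their tensor product trivialising $r^*(L \otimes M)$, \rref{prop:mu_2_isom} gives isomorphisms of $\mu_2$-torsors $(r^*L, r^*\lambda) \cong (1, u\can)$, $(r^*M, r^*\mu) \cong (1, v\can)$, and $(r^*(L \otimes M), r^*(\lambda \otimes \mu)) \cong (1, uv\can)$ for some $u,v \in H^0((p^*M)^\circ, \Gm)$; the third identification reduces, after tracking through the tensor product of trivialisations, to the elementary fact that the tensor product of two scalar pairings multiplies the scalars. By \rref{ex:L_trivial} we have $\alpha(1, u\can) = \langle u \rangle$, and similarly for $v$ and $uv$. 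The relation \eqref{eq:<>_mult} then yields $\langle uv \rangle = \langle u \rangle \circ \langle v \rangle$, proving the required multiplicativity.

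The main obstacle is the reduction step: pullback along $L^\circ$ is not faithful on $\SH(S)$ itself, and without this reduction one cannot directly bring the computation to the trivial-line-bundle setting in which \rref{ex:L_trivial} and \eqref{eq:<>_mult} combine to give the answer. The $\eta$-periodicity is thus essential to the group-morphism statement, as opposed to the mere pointed-set statement of the previous proposition.
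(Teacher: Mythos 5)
Your proposal is correct and follows essentially the same route as the paper: reduce to the trivial case by pulling back along the faithful functors $p^*$ and $q^*$ of \rref{cor:L_circ_faithful} (using the functoriality of \rref{p:funct_alpha} and the invariance of \rref{prop:mu_2_isom}), then conclude from \rref{ex:L_trivial} and \eqref{eq:<>_mult}. The only difference is cosmetic: you package the two reductions into a single composite $r = p \circ q$, whereas the paper performs them one after the other.
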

\begin{proof}
Consider $\mu_2$-torsors given by line bundles $L,M$ over $S$, and isomorphisms $L\xrightarrow{\sim} L^\vee$ and $M \xrightarrow{\sim} M^\vee$. As the functor $f^* \colon \SH(S)[\eta^{-1}] \to \SH(L^\circ)[\eta^{-1}]$ is faithful \rref{cor:L_circ_faithful}, by functoriality \rref{p:funct_alpha} we may assume that the line bundle $L$ is trivial. Similarly, we may assume that $M$ is also trivial. By \rref{prop:mu_2_isom}, we may assume that $L=M=1$. Then, in view of \rref{ex:L_trivial}, the statement follows from  \eqref{eq:<>_mult}.
\end{proof}

\begin{remark}
\label{rem:coh}
Let $\rho \colon \Gm \to \BB \mu_2$ be the map classifying the $\mu_2$-torsor $(1,t\id)$ over $\Gm$, where $t\in H^0(\Gm,\Gm)$ is the tautological section. Recall from \dref{th:B_mu}{th:B_mu:even} that $\Sup \rho \colon \Sup \Gm \to \Sup \BB \mu_2$ is an isomorphism in $\SH(S)[\eta^{-1}]$. If a $\mu_2$-torsor $(L,\lambda)$ is classified by the map $f \colon S \to \BB \mu_2$, we claim that $\alpha(L,\lambda)$ is the composite in $\SH(S)[\eta^{-1}]$ (the map $\pi_1$ was defined in \rref{def:pi})
\begin{equation}
\label{eq:composite}
\Un_S \xrightarrow{\Sup f} \Sup \BB \mu_2 \xrightarrow{(\Sup \rho)^{-1}} \Sup \Gm \xrightarrow{\Sigma^{-2,-1}\Sup \pi_1} \Un_S.
\end{equation}
Indeed, applying the functor $\SH(S)[\eta^{-1}] \to \SH(L^\circ)[\eta^{-1}]$ which is faithful by \rref{cor:L_circ_faithful}, and using \rref{prop:mu_2_isom}, we may assume that $L=1$. Then $\lambda$ is given by multiplication by an element $u \in H^0(S,\Gm)$, and the map $f$ factors as $S \xrightarrow{u} \Gm \xrightarrow{\rho} \BB \mu_2$. Denoting by $p \colon \Gm \to S$ is the projection, the composite \eqref{eq:composite} is given by
\[
\Un_S \xrightarrow{\Sup u} \Sup \Gm \xrightarrow{\langle t \rangle} \Sup \Gm \xrightarrow{\Sup p} \Un_S,
\]
which coincides with $\langle u \rangle \in \Aut_{\SH(S)[\eta^{-1}]}(S)$. Thus the claim follows from \rref{ex:L_trivial}.
\end{remark}

\end{document}